\newtheorem{theorem}{Theorem}
\newtheorem{lemma}{Lemma}
\newtheorem{definition}{Definition}
\newtheorem{proposition}{Proposition}
\newtheorem{example}{Example}
\newtheorem{corollary}{Corollary}
\DeclareMathSymbol{\boxRight}{\mathrel}{symbolsC}{136} 
\DeclareMathSymbol{\diamondRight}{\mathrel}{symbolsC}{140} 
\DeclareMathSymbol{\boxto}{\mathrel}{symbolsC}{128}
\DeclareMathSymbol{\diamondto}{\mathrel}{symbolsC}{132}
\newcommand*\vDdash{%
	\mathrel{%
		\ooalign{$\vdash$\cr$\vDash$}%
	}%
}
\def\@clipped@vdash{%
	\raise .6ex\hbox{\clipbox{0pt .6ex 0pt .6ex}{$\vdash$}}%
}
\begin{document}
{\selectlanguage{english}
\binoppenalty = 10000 %
\relpenalty   = 10000 %

\pagestyle{headings} \makeatletter
\renewcommand{\@evenhead}{\raisebox{0pt}[\headheight][0pt]{\vbox{\hbox to\textwidth{\thepage\hfill \strut {\small Grigory K. Olkhovikov}}\hrule}}}
\renewcommand{\@oddhead}{\raisebox{0pt}[\headheight][0pt]{\vbox{\hbox to\textwidth{{A complete basic intuitionistic conditional logic}\hfill \strut\thepage}\hrule}}}
\makeatother

\title{An intuitionistically complete system of basic intuitionistic conditional logic}
\author{Grigory K. Olkhovikov\\ Department of Philosophy I\\ Ruhr University Bochum\\
email: grigory.olkhovikov@\{rub.de, gmail.com\}}
\date{}
\maketitle
\begin{quote}
{\bf Abstract.} We introduce a basic intuitionistic conditional logic $\mathsf{IntCK}$ that we show to be complete both relative to a special type of Kripke models and relative to a standard translation into first-order intuitionistic logic. We show that $\mathsf{IntCK}$ stands in a very natural relation to other similar logics, like the basic classical conditional logic $\mathsf{CK}$ and the basic intuitionistic modal logic $\mathsf{IK}$. As for the basic intuitionistic conditional logic $\mathsf{ICK}$ proposed in \cite{weiss}, $\mathsf{IntCK}$ extends its language with a diamond-like conditional modality $\diamondto$, but its ($\diamondto$)-free fragment is also a proper extension of $\mathsf{ICK}$. We briefly discuss the resulting gap between the two candidate systems of basic intuitionistic conditional logic and the possible pros and cons of both candidates.
\end{quote}
\begin{quote}{\bf Keywords.} first-order logic, intuitionistic logic, strong completeness, conditional logic
\end{quote}

\section{Introduction}
The present paper was written in an attempt to find and vindicate an answer to the question, what is a basic intuitionistic conditional logic. By \textit{basic} logic we mean a logic that is complete relative to a universal class of suitably defined Kripke models. This basic logic must also be \textit{intuitionistic} in the sense of being the intuitionistic counterpart of the basic classical conditional logic $\mathsf{CK}$ (introduced in \cite{chellas}; see also \cite{segerberg}). More precisely, its only difference from  $\mathsf{CK}$ must consist in the fact that the classical reading of the Kripke semantics for $\mathsf{CK}$ is replaced by intuitionistic reading, and the classical first-order metalogic of $\mathsf{CK}$ is replaced with intuitionistic one. Finally, this logic must be fully \textit{conditional} in that it must enrich the language of intuitionistic propositional logic with the full set of conditional modalities, both the stronger box-like $\boxto$ and the weaker diamond-like $\diamondto$.

The existing literature on intuitionistic conditional logic is not vast, but it seems to have a clear candidate for the role of a basic system. This candidate is the system $\mathsf{ICK}$, introduced by Y. Weiss in \cite{weiss} (see also the more detailed exposition in \cite{weiss-thesis}). This research was followed by \cite{ciardelliliu}, which was based on a version of Kripke semantics different from that used in \cite{weiss}; however, this change did not affect the status of $\mathsf{ICK}$ which the authors of \cite{ciardelliliu} have shown to be also the system complete relative to the universal class of their preferred variety of Kripke models. $\mathsf{ICK}$ also has a claim for the title of the intuitionistic counterpart of $\mathsf{CK}$ in that its complete axiomatization results from replacing the classical propositional fragment of $\mathsf{CK}$ with the intuitionistic one.

However, $\mathsf{ICK}$  only features the stronger conditional modality, $\boxto$, and no attempt is made to show the completeness of $\mathsf{ICK}$ relative to an intuitionistic reading of the metatheory of $\mathsf{CK}$. The former shortcoming is openly acknowledged by I. Ciardelli and X. Liu, who write: `` Just like $\forall$ and $\exists$ are not interdefinable in intuitionistic predicate logic, and $\Box$ and $\Diamond$ are not interdefinable in intuitionistic modal
logic, also $\boxto$ and $\diamondto$ will not be interdefinable in intuitionistic conditional logic.
In order to capture might-conditionals in the intuitionistic setting, we need to add $\diamondto$ to the language as a new primitive... In this extended setting, the properties of $\boxto$ might not
uniquely determine the properties of $\diamondto$. Therefore, it becomes especially interesting to look at the landscape of intuitionistic conditional logics in a setting where the language comprises both operators''\cite[p. 830]{ciardelliliu}.

In this paper, we are going to propose a different system for the role of basic intuitionistic conditional logic. This system, which we call $\mathsf{IntCK}$, both answers the concern expressed in the above quote by I. Ciardelli and X. Liu and can be shown to enjoy a form of strong completeness relative to an intuitionistic reading of its metatheory. We will also show that the ($\diamondto$)-free fragment of $\mathsf{IntCK}$ is a proper extension of $\mathsf{ICK}$, whence it follows that $\mathsf{ICK}$ is incomplete relative to intuitionistic reading of its metatheory and hence cannot be viewed as a full basic intuitionistic conditional logic even within the ($\diamondto$)-free fragment of the conditional language.

The rest of this paper is organized as follows. Section \ref{S:Prel} introduces the notational preliminaries, followed by Section \ref{S:system} where we explain the syntax and a form of Kripke semantics of $\mathsf{IntCK}$ (in Section \ref{sub:lands}) and then axiomatize the logic (in Section \ref{sub:axiomatization}). Section \ref{S:other} explores the relation of $\mathsf{IntCK}$ to other logics, both propositional (in Section \ref{sub:intense}) and first-order (in Section \ref{sub:foil}). The latter subsection also contains the completeness theorem for $\mathsf{IntCK}$ relative to an intuitionistic reading of its Kripke semantics.

Finally, in Section \ref{S:conclusion}, we briefly discuss the results of the previous sections, and, after drawing some conclusions, describe several avenues for continuing the research lines presented in the paper. The paper also has several appendices where the reader can find the more technical parts of our reasoning which we include for the sake of completeness.

\section{Preliminaries}\label{S:Prel}
We use this section to fix some notations to be used throughout this (rather lengthy) paper.

We will use IH as the abbreviation for Induction Hypothesis in the inductive proofs, and we will write $\alpha:=\beta$ to mean that we define $\alpha$ as $\beta$. We will use the usual notations for sets and functions. As for the sets, we will write $X\Subset Y$, iff $X\subseteq Y$ and $X$ is finite. Furthermore, we will understand the natural numbers as the finite von Neumann ordinals. We denote by $\omega$ the smallest infinite ordinal; given two ordinals $\lambda$, $\mu$, we will use $\lambda \in \mu$ and $\lambda < \mu$ interchangeably. Given a (finite) tuple of any sort of objects $\alpha = (x_1,\ldots,x_n)$, we will denote by $init(\alpha)$ and $end(\alpha)$ the initial and final element of $\alpha$, that is to say, $x_1$ and $x_n$, respectively. More generally, given any $i < \omega$ such that $1 \leq i \leq n$, we set that $\pi^i(\alpha):= x_i$, i.e. that $\pi^i(\alpha)$ denotes the $i$-th projection of $\alpha$. Given another tuple $\beta = (y_1,\ldots,y_m)$, we will denote by $(\alpha)^\frown(\beta)$ the concatenation of the two tuples, i.e. the tuple $(x_1,\ldots,x_n,y_1,\ldots,y_m)$. The empty tuple will be denoted by $\Lambda$.

We will extensively use ordered couples of sets which we will also call \textit{bi-sets}. The usual set-theoretic relations and operations on bi-sets will be understood componentwise, so that, e.g. $(X,Y)\subseteq(Z,W)$ means that $X \subseteq Z$ and $Y \subseteq Y$ and similarly in other cases.

The relations will be understood as sets of ordered tuples where the length of the tuple defines the \textit{arity} of the relation. Given binary relations $R \subseteq X \times Y$ and $S\subseteq Y\times Z$, we will denote their composition by $R\circ S:= \{(a,c)\mid\text{ for some }b \in Y,\,(a, b)\in R\,\&\,(b,c)\in S\}$.

Given a set $X$, we will denote by $id[X]$ the identity function on $X$, i.e. the function $f:X\to X$ such that $f(x) = x$ for every $x \in X$. It is clear that a function $f:X\to Y$ can be understood as a special type of relation $f\subseteq X\times Y$. Therefore, our notation for the composition of functions is in line with the one used for the composition of relations: namely, given two functions $f:X\to Y$ and $g:Y\to Z$, we denote by $f\circ g$ the function $h:X\to Z$ such that $h(x) =g(f(x))$ for every $x \in X$.\footnote{It seems that, in the existing literature, the more popular option is to define such function $h$ by $g\circ f$ (see, e.g., \cite{awodey}). However, in the logical literature our convention on the composition of relations seems to be more popular than the alternative one, viz., to denote by $S\circ R$ the relation which we denote by $R\circ S$; and since the functions go hand-in-hand with relations, we decided to adopt a similar notation for the composition of functions as well. } In relation to the operation of superposition, functions of the form $id[X]$ have a special importance as a limiting case. More precisely, given a set $X$ and a family $F$ of functions from $X$ to $X$, we will also assume that the superposition of an empty tuple of functions from $F$ is just $id[X]$.  

Furthermore, if $f:X\to Y$ is any function, $x \in X$ and $y \in Y$, we will denote by $f[x/y]$ the unique function $g:X\to Y$ such that, for a given $z \in X$ we have:
$$
g(z):=\begin{cases}
	y,\text{ if }z = x;\\
	f(z),\text{ otherwise. }
\end{cases}
$$ 
Despite our efforts to accommodate and represent the intuitionistic reading of the classical semantics for conditional logic, the meta-logic of this paper remains classical. Therefore, we presuppose the basic acquaintance with the language of first-order logic and its classical model theory. Finally, in view of the fact that this paper is about \textit{intuitionistic} conditional logic, we presuppose the basic acquaintance of the reader with both propositional and first-order version of this logic; in particular the reader should know of at least one complete Hilbert-style axiomatization of intuitionistic logic.

\section{$\mathsf{IntCK}$, the basic intuitionistic logic of conditionals}\label{S:system}
\subsection{Language and semantics}\label{sub:lands}
We are going to consider the language $\mathcal{L}$ based on a countably infinite set of propositional variables $Var$ and the following set of logical symbols $\{\bot, \top, \vee, \wedge, \to, \boxto,  \diamondto\}$. 
We will denote the propositional variables by letters $p, q, r, s$ and the formulas in $\mathcal{L}$ by $\phi, \psi, \chi, \theta$, adding the subscripts and superscripts when it is convenient.

We will use $\neg\phi$ as an abbreviation for $\phi \to \bot$ and $\phi \leftrightarrow \psi$ as an abbreviation for $(\phi \to \psi)\wedge(\psi\to \phi)$. The formulas of $\mathcal{L}$ are interpreted by the following models:

\begin{definition}\label{D:chellas-model}
	A model is a structure of the form $\mathcal{M} = (W, \leq, R, V)$, where $W \neq \emptyset$ is a set of worlds, $\leq$ is a pre-order (i.e., a reflexive and transitive relation) on $W$, $V:Var\to \mathcal{P}(W)$ is such that, for every $p \in Var$ and all $w, v \in W$ it is true that $
	(w\mathrel{\leq}v \& w \in V(p))\Rightarrow  v \in V(p)$.
	
	Next, we must have $R \subseteq W \times \mathcal{P}(W)\times W$. Thus, for every $X \subseteq W$, $R$ induces a binary relation $R_X$ on $W$ such that, for all $w,v \in W$, $R_X(w,v)$ iff $R(w, X, v)$. Finally, the following conditions must be satisfied for every $X \subseteq W$:
	\begin{align}
		(\leq^{-1}\circ R_X) &\subseteq (R_X\circ\leq^{-1})\label{Cond:1}\tag{c1}\\
		(R_X\circ\leq) &\subseteq (\leq\circ R_X)\label{Cond:2}\tag{c2}
	\end{align}	
\end{definition}
Conditions \eqref{Cond:1} and \eqref{Cond:2} can be naturally reformulated as requirements to complete the dotted parts of each of the following commuting diagrams once the respective straight-line part is given:
\begin{diagram}
w'& \rDotsto_{R_X} & v'         & & w'& \rDotsto_{R_X} & v'\\
\uTo_\leq &     & \uDotsto_\leq & & \uDotsto_\leq & &  \uTo_\leq\\
w & \rTo_{R_X} & v              & & w        &  \rTo_{R_X} & v
\end{diagram}
The models defined are the so-called \textit{Chellas models}. The alternatives to Chellas models include \textit{Segerberg models} which use a designated family of subsets of $W$ in place of its full powerset $\mathcal{P}(W)$ as in Definition \ref{D:chellas-model}. Yet another alternative is to use families of formula-indexed binary relations $\{R_\phi\mid\phi\in\mathcal{L}\}$. These choices are of no import on the level of the basic conditional logic, in other words, they all induce one and the same logic both in the classical and in the intuitionistic case. We have chosen Chellas models since their definition looks short and simple; yet the Segerberg models can be perhaps ascribed a deeper foundational meaning. For example, the first-order intuitionistic theory $Th$ defined in Section \ref{sub:foil} gives the intuitionistic encoding of the Segerberg variety of classical conditional semantics rather than the Chellas one.

Our standard notation for models is $\mathcal{M} = (W, \leq, R, V)$. Any model decorations are assumed to be inherited by their components, so that, for example $\mathcal{M}_n$ always stands for  $(W_n, \leq_n, R_n, V_n)$. A \textit{pointed model} is a structure of the form $(\mathcal{M}, w)$, where $\mathcal{M}$ is a model and $w \in W$.

The formulas of $\mathcal{L}$ are interpreted over pointed models by means of the satisfaction relation $\models$ defined by the following induction on the construction of $\phi \in \mathcal{L}$:
\begin{align*}
	\mathcal{M}, w&\models \top\\
	\mathcal{M}, w&\not\models \bot\\
	\mathcal{M}, w&\models p \Leftrightarrow w \in V(p)&&\text{for $p \in Var$}\\
	\mathcal{M}, w&\models \psi \wedge \chi \Leftrightarrow \mathcal{M}, w\models \psi\text{ and }\mathcal{M}, w\models \chi\\
	\mathcal{M}, w&\models \psi \vee \chi \Leftrightarrow \mathcal{M}, w\models \psi\text{ or }\mathcal{M}, w\models \chi\\
	\mathcal{M}, w&\models \psi \to \chi \Leftrightarrow (\forall v \geq w)(\mathcal{M}, v\models \psi\Rightarrow\mathcal{M}, v\models \chi)\\
	\mathcal{M}, w&\models \psi \boxto \chi \Leftrightarrow (\forall v \geq w)(\forall u \in W)(R_{\|\psi\|_\mathcal{M}}(v, u) \Rightarrow\mathcal{M}, u\models \chi)\\
	\mathcal{M}, w&\models \psi \diamondto \chi \Leftrightarrow (\exists u \in W)(R_{\|\psi\|_\mathcal{M}}(w, u)\text{ and }\mathcal{M}, u\models \chi)	
\end{align*}
where we assume, for any given $\phi \in \mathcal{L}$, that $\|\phi\|_\mathcal{M}$ stands for the set $\{w \in W\mid \mathcal{M}, w\models\phi\}$.

Given a pair $(\Gamma, \Delta) \in \mathcal{P}(\mathcal{L})\times\mathcal{P}(\mathcal{L})$, we say that a pointed model $(\mathcal{M}, w)$ \textit{satisfies}  $(\Gamma, \Delta)$ and write $\mathcal{M}, w \models(\Gamma, \Delta)$ iff we have $(\forall\phi \in \Gamma)(\mathcal{M}, w\models \phi)\,\&\,(\forall\psi \in \Delta)(\mathcal{M}, w\not\models \phi)$.
We say that  $(\Gamma, \Delta)$ is \textit{satisfiable} iff some pointed model satisfies it, and that $\Delta$ \textit{follows} from $\Gamma$ (and write $\Gamma \models \Delta$) iff $(\Gamma, \Delta)$ is unsatisfiable. We say that $\Gamma$ is satisfiable iff $(\Gamma, \emptyset)$ is; and if $(\mathcal{M}, w)$ satisfies $(\Gamma, \emptyset)$, then we simply write $\mathcal{M}, w \models\Gamma$. If some of $\Gamma$, $\Delta$ are singletons, we may omit the figure brackets; in case some of them are empty, we may omit them altogether. We say that $\phi \in \mathcal{L}$ is \textit{valid} iff $\emptyset \models \phi$, or, in other words, iff $\models \phi$.

Given a model $\mathcal{M}$ and an $X \subseteq W$, we say that $X$ is \textit{upward-closed in $\mathcal{M}$} iff $(\forall w \in X)(\forall v \geq w)(v \in X)$. Definition \ref{D:chellas-model} clearly implies that, for every Chellas model $\mathcal{M}$ and for every $p \in Var$, the set $V(p) = \|p\|_\mathcal{M}$ is upward-closed in $\mathcal{M}$. The latter observation can be lifted to the level of arbitrary formulas:
\begin{lemma}\label{L:chellas-monotonicity}
Let $\mathcal{M}$ be a Chellas model, and let $\phi \in \mathcal{L}$. Then $\|\phi\|_\mathcal{M}$ is upward-closed in $\mathcal{M}$. 
\end{lemma}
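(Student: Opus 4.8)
The plan is to argue by induction on the construction of $\phi$, showing in each case that if $w \in \|\phi\|_\mathcal{M}$ and $v \geq w$, then $v \in \|\phi\|_\mathcal{M}$. The base cases are immediate: $\|\top\|_\mathcal{M} = W$ and $\|\bot\|_\mathcal{M} = \emptyset$ are trivially upward-closed, while for $\phi = p \in Var$ the claim is exactly the condition imposed on $V$ in Definition \ref{D:chellas-model} and already noted in the text preceding the lemma. These furnish the base of the induction.

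For the remaining connectives I would split into two groups. The cases $\phi = \psi \wedge \chi$ and $\phi = \psi \vee \chi$ follow directly from the IH applied to $\psi$ and $\chi$: from $w \models \psi\wedge\chi$ and $v \geq w$ we get $v \models \psi$ and $v \models \chi$ by IH, and dually for $\vee$. The cases $\phi = \psi \to \chi$ and $\phi = \psi \boxto \chi$ are handled uniformly by observing that their truth conditions already quantify universally over all $\geq$-successors, so monotonicity here rests only on the transitivity of $\leq$ and needs no appeal to the IH. For instance, if $w \models \psi \boxto \chi$ and $v \geq w$, then for any $v' \geq v$ transitivity gives $v' \geq w$, whence every $u$ with $R_{\|\psi\|_\mathcal{M}}(v', u)$ satisfies $u \models \chi$; thus $v \models \psi \boxto \chi$. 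The implication case is identical in structure.

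The genuinely interesting case, and the one I expect to be the main obstacle, is $\phi = \psi \diamondto \chi$, and it is precisely here that condition \eqref{Cond:1} does the work. Suppose $w \models \psi \diamondto \chi$ and $w \leq v$, so there is some $u$ with $R_{\|\psi\|_\mathcal{M}}(w, u)$ and $u \models \chi$. Writing $X := \|\psi\|_\mathcal{M}$, the pair $(v, u)$ lies in $\leq^{-1}\circ R_X$ via the intermediate point $w$, so \eqref{Cond:1} yields a world $u'$ with $R_X(v, u')$ and $u \leq u'$. Since $u \models \chi$ and $u \leq u'$, the IH applied to $\chi$ gives $u' \models \chi$, and therefore $v \models \psi \diamondto \chi$, as required. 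The one point to keep straight is the bookkeeping of the composition convention and the direction of $\leq^{-1}$, i.e. checking that the arrows of the first commuting diagram are oriented so that \eqref{Cond:1} rather than \eqref{Cond:2} is the applicable closure condition; once the diagram is read correctly the argument is mechanical, and the induction is complete.
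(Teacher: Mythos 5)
Your proof is correct and takes exactly the route the paper has in mind: the paper simply omits "the easy proof by induction on the construction of $\phi$," and your write-up supplies precisely that induction, with the right division into cases (IH for $\wedge,\vee$, transitivity of $\leq$ alone for $\to$ and $\boxto$). Your treatment of the one non-routine case, $\psi \diamondto \chi$, correctly reads the paper's composition convention so that \eqref{Cond:1} produces the successor $u'$ with $R_{\|\psi\|_\mathcal{M}}(v,u')$ and $u \leq u'$, after which the IH on $\chi$ closes the argument.
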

We omit the easy proof by induction on the construction of $\phi \in \mathcal{L}$. We close this subsection with our first result about $\mathsf{IntCK}$:
\begin{proposition}\label{P:dp}
$\mathsf{IntCK}$ has Disjunction Property. In other words, for all $\phi_1,\phi_2 \in \mathcal{L}$, $\phi_1\vee\phi_2 \in \mathsf{IntCK}$ iff $\phi_1 \in \mathsf{IntCK}$ or $\phi_2 \in \mathsf{IntCK}$.	
\end{proposition}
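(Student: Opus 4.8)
The statement should be read semantically, with $\phi\in\mathsf{IntCK}$ meaning $\models\phi$, and I would prove both directions accordingly. The right-to-left implication is immediate: if, say, $\phi_1$ is valid, then every pointed model satisfies $\phi_1$ and hence $\phi_1\vee\phi_2$, so $\phi_1\vee\phi_2$ is valid. The substantive direction is the contrapositive of left-to-right: assuming $\phi_1\notin\mathsf{IntCK}$ and $\phi_2\notin\mathsf{IntCK}$, I would build a single pointed model refuting $\phi_1\vee\phi_2$.

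The construction is the usual rooted ``gluing''. Fix pointed models $(\mathcal{M}_1,w_1)$ and $(\mathcal{M}_2,w_2)$ with $\mathcal{M}_i,w_i\not\models\phi_i$ and with $W_1\cap W_2=\emptyset$. Let $r$ be a fresh world and set $W:=\{r\}\cup W_1\cup W_2$; let $\leq$ be the reflexive-transitive closure of $\mathord{\leq_1}\cup\mathord{\leq_2}\cup\{(r,w_1),(r,w_2)\}$; put $V(p):=V_1(p)\cup V_2(p)$, so that $r\notin V(p)$ for every $p$; and declare $R(x,X,y)$ to hold iff $x$ and $y$ lie in the same component $W_i$ and $R_i(x,X\cap W_i,y)$ holds there, so that in particular $r$ emits no $R$-edges. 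The first task is to check that $\mathcal{M}=(W,\leq,R,V)$ is a Chellas model. Persistence of $V$ is clear, and conditions \eqref{Cond:1}, \eqref{Cond:2} for an arbitrary $X\subseteq W$ reduce to the corresponding conditions in $\mathcal{M}_1$ and $\mathcal{M}_2$ for the index $X\cap W_i$: the key points are that from a world of $W_i$ the relation $\leq$ never leaves $W_i$, that $r$ has no $\leq$-predecessor other than itself, and that $r$ has no outgoing $R$-edges, so that every instance involving $r$ is vacuous.

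The heart of the argument is a truth-transfer lemma: for every $\phi\in\mathcal{L}$ and every $x\in W_i$ one has $\mathcal{M},x\models\phi$ iff $\mathcal{M}_i,x\models\phi$, proved by induction on $\phi$; an immediate corollary is $\|\phi\|_\mathcal{M}\cap W_i=\|\phi\|_{\mathcal{M}_i}$. The propositional cases are routine, using that for $x\in W_i$ the set of $\leq$-successors of $x$ computed in $\mathcal{M}$ coincides with the one computed in $\mathcal{M}_i$. I expect the conditional cases to be the main obstacle, precisely because $\|\psi\|_\mathcal{M}$ is evaluated over all of $W$ while the componentwise definition of $R$ only consults $X\cap W_i$. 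Here one first applies the induction hypothesis to the antecedent $\psi$ to obtain $\|\psi\|_\mathcal{M}\cap W_i=\|\psi\|_{\mathcal{M}_i}$, and then uses that for $x\in W_i$ every $\leq$-successor $v$ of $x$ and every $u$ with $R_{\|\psi\|_\mathcal{M}}(v,u)$ again lie in $W_i$; this locality makes the quantifiers in the clauses for $\boxto$ and $\diamondto$ range over exactly the worlds of $\mathcal{M}_i$, so the defining biconditionals match those in $\mathcal{M}_i$ once the induction hypothesis for $\chi$ is applied.

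Finally I would combine these ingredients. By truth transfer $\mathcal{M},w_i\not\models\phi_i$ for $i=1,2$. Since $r\leq w_i$ and, by Lemma \ref{L:chellas-monotonicity}, $\|\phi_i\|_\mathcal{M}$ is upward-closed, $r\in\|\phi_i\|_\mathcal{M}$ would force $w_i\in\|\phi_i\|_\mathcal{M}$, a contradiction; hence $\mathcal{M},r\not\models\phi_i$ for both $i$. Therefore $\mathcal{M},r\not\models\phi_1\vee\phi_2$, witnessing $\phi_1\vee\phi_2\notin\mathsf{IntCK}$ and completing the contrapositive.
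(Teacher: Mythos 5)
Your proposal is correct and takes essentially the same route as the paper's own proof: glue the two countermodels under a fresh root, define $R$ componentwise via the index $X\cap W_i$, establish a truth-transfer claim by induction on formulas, and then use upward-closedness of truth sets (Lemma \ref{L:chellas-monotonicity}) to pull the failure of both disjuncts down to the root. The only cosmetic differences are that you argue by contraposition rather than by contradiction, and you place the root only below $w_1$ and $w_2$ (via reflexive-transitive closure) instead of below every world of $W$, which changes nothing essential.
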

\begin{proof}
	The right-to-left direction is trivial. As for the other direction, assume, towards contradiction, that $\phi_1\vee\phi_2 \in \mathsf{IntCK}$, but both  $\phi_1 \notin \mathsf{IntCK}$ and $\phi_2 \notin \mathsf{IntCK}$. Then we can choose pointed models $(\mathcal{M}_1,w_1)$ and $(\mathcal{M}_2,w_2)$ such that $\mathcal{M}_i,w_i\not\models_c\phi_i$ for all $i \in \{1,2\}$; we may assume, wlog, that $W_1\cap W_2 = \emptyset$. 
	We then choose an element $w$ outside $W_1\cup W_2$ and define the following pointed model $(\mathcal{M},w)$ for which we set:
	\begin{align*}
		W &:= \{w\}\cup W_1\cup W_2\\
		\leq &:= \{(w,v)\mid v \in W\} \cup \leq_1 \cup \leq_2\\
		R &:= \{(v,X,u)\mid v,u \in W_1,\,(v,X\cap W_1, u)\in R_1\}\cup \{(v,X,u)\mid v,u \in W_2,\,(v,X\cap W_2, u)\in R_2\}\\
		V(p) &:= V_1(p)\cup V_2(p)\qquad\qquad\qquad \text{for }p \in Var
	\end{align*}
We show that $\mathcal{M}$ is indeed a model. The only non-trivial part is the satisfaction of conditions \eqref{Cond:1} and \eqref{Cond:2} from Definition \ref{D:chellas-model}.

As for \eqref{Cond:1}, assume that some $v',v,u \in W$ and $X\subseteq W$ are such that $v'\geq v\mathrel{R_X}u$. Then, by defintion of $R$, we must have either $v,u \in W_1$ or $v,u\in W_2$. Assume, wlog, that $v,u \in W_1$. Then we must have, first, that $v\mathrel{(R_1)_{X\cap W_1}}u$, and, second, that $v'\geq_1 v$ so that also $v'\in W_1$. But then, since $\mathcal{M}_1$ satisfies  \eqref{Cond:1}, there must be a $u'\in W_1$ such that $v'\mathrel{(R_1)_{X\cap W_1}}u'\geq_1u$ whence clearly also  $v'\mathrel{R_{X}}u'\geq u$, so that  \eqref{Cond:1} is shown to hold for $\mathcal{M}$. We argue similarly for  \eqref{Cond:2}.

Next, the following claim can be shown by a straightforward induction on the construction of $\phi \in\mathcal{L}$:

\textit{Claim}. For every $i\in \{1,2\}$, every $v \in W_i$, and every $\phi \in \mathcal{L}$, we have $\mathcal{M}, v\models_c\phi$ iff $\mathcal{M}_i, v\models_c\phi$. 

It follows now that $\mathcal{M}, w_i\not\models\phi_i$ for all $i \in \{1,2\}$, and, since we have $w\leq w_1, w_2$, Lemma \ref{L:chellas-monotonicity} implies that  $\mathcal{M}, w\not\models\phi_i$ for all $i \in \{1,2\}$, or, equivalently, that $\mathcal{M}, w\not\models_c\phi_1\vee\phi_2$, contrary to our assumption. The obtained contradiction shows that $\mathsf{IntCK}$ must have Disjunction Property.
\end{proof}

\subsection{Axiomatization}\label{sub:axiomatization}
In this subsection, we obtain a sound and (strongly) complete axiomatization of $\mathsf{IntCK}$. We consider the Hilbert-style axiomatic system $\mathbb{ICK}$, given by the following list of axiomatic schemes:
\begin{align}
	\text{A complete list of axioms }&\text{of intuitionistic propositional logic $\mathsf{Int}$}\label{E:a0}\tag{A0}\\
	((\phi \boxto \psi)\wedge(\phi \boxto \chi))&\leftrightarrow(\phi \boxto (\psi \wedge \chi))\label{E:a1}\tag{A1}\\
	((\phi\diamondto\psi)\wedge (\phi \boxto \chi))&\to	(\phi \diamondto(\psi \wedge\chi))\label{E:a2}\tag{A2}\\
	(\phi\diamondto(\psi \vee \chi)) &\leftrightarrow ((\phi\diamondto\psi)\vee(\phi\diamondto\chi))\label{E:a3}\tag{A3}\\
	((\phi \diamondto \psi)\to(\phi \boxto \chi))&\to(\phi \boxto (\psi \to \chi))\label{E:a4}\tag{A4}\\
	\phi&\boxto \top \label{E:a5}\tag{A5}\\
	\neg(\phi&\diamondto \bot) \label{E:a6}\tag{A6}
\end{align}
Besides the axioms, $\mathbb{ICK}$ includes the following rules of inference:
\begin{align}
	\text{From }\phi, \phi \to \psi&\text{ infer }\psi\label{E:mp}\tag{MP}\\
	\text{From }\phi\leftrightarrow\psi &\text{ infer } (\phi\boxto\chi)\leftrightarrow(\psi\boxto\chi)\label{E:RAbox}\tag{RA$\Box$}\\
	\text{From }\phi\leftrightarrow\psi &\text{ infer } (\chi\boxto\phi)\leftrightarrow(\chi\boxto\psi)\label{E:RCbox}\tag{RC$\Box$}\\
	\text{From }\phi\leftrightarrow\psi &\text{ infer } (\phi\diamondto\chi)\leftrightarrow(\psi\diamondto\chi)\label{E:RAdiam}\tag{RA$\Diamond$}
	\\
	\text{From }\phi\leftrightarrow\psi &\text{ infer } (\chi\diamondto\phi)\leftrightarrow(\chi\diamondto\psi)\label{E:RCdiam}\tag{RC$\Diamond$}		
\end{align}
We assume the standard notion of a proof in a Hilbert-style axiomatic system for $\mathbb{ICK}$, namely as a finite sequence of formulas, where every formula is either an axiom or is derived from earlier formulas by an application of a rule. A proof is a proof of its last formula. A derivation from premises is a finite sequence of formulas, where every formula is either a premise, or a provable formula, or is derived from earlier formulas by an application of \eqref{E:mp}. Given a $\Gamma\cup\{\psi\}\subseteq \mathcal{L}$, we write $\Gamma \vdash \psi$ iff $\psi$ there exists a derivation of $\psi$ from some (possibly zero) elements of $\Gamma$ as premises; we omit the figure brackets in case $\Gamma$ is either a singleton or empty. On the other hand, we write $\Gamma \vDdash \psi$ iff there is a derivable rule allowing to infer $\phi$ from $\Gamma$, i.e. iff there is a finite sequence of formulas in which the last formula is $\psi$, and every formula in the sequence is either in  $\Gamma$, or a provable formula, or is obtained from earlier formulas by an application of one of the inference rules. Thus, we have $\phi\leftrightarrow\psi \vDdash (\phi\boxto\chi)\leftrightarrow(\psi\boxto\chi)$ but $\phi\leftrightarrow\psi \not\vdash (\phi\boxto\chi)\leftrightarrow(\psi\boxto\chi)$. In particular, it follows from these conventions that $\phi$ is provable iff $\vDdash \phi$ iff $\vdash \phi$.

Before we go on to prove the soundness and completeness of $\mathbb{ICK}$ relative to $\mathsf{IntCK}$, we would like to quickly address the relations between $\mathbb{ICK}$ and the intuitionistic propositional logic $\mathsf{Int}$. 

The language $\mathcal{L}_i$ of $\mathsf{Int}$ is the $\{\boxto, \diamondto\}$-free fragment of $\mathcal{L}$; a complete axiomatization of $\mathsf{Int}$ is provided by \eqref{E:a0} together with \eqref{E:mp}. $\mathsf{Int}$ is one of the best researched non-classical propositional logics with numerous detailed expositions to be found in the existing literature (see, e.g., \cite[Ch. 5]{vandalen} for a quick textbook level introduction). The following lemma sums up the relations between $\mathsf{Int}$ and $\mathbb{ICK}$: 
\begin{lemma}\label{L:int}
	The following statements hold:
	\begin{enumerate}
		\item If $\Gamma, \Delta \subseteq\mathcal{L}_i$ are such that $\Gamma\models_{\mathsf{Int}}\Delta$, and  $\Gamma', \Delta' \subseteq \mathcal{L}$ are obtained from $\Gamma, \Delta$ by a simultaneous substitution of $\mathcal{L}$-formulas for variables, then $\Gamma'\vdash\Delta'$. Moreover, Deduction Theorem holds for $\mathbb{ICK}$ in that for all $\Gamma \cup \{\phi,\psi\}\subseteq \mathcal{L}$ we have $\Gamma \vdash \phi\to\psi$ iff $\Gamma, \phi\vdash \psi$.
		
		\item If $\phi \in \mathcal{L}_i$, then $\vdash \phi$ iff $\phi\in \mathsf{Int}$
\end{enumerate}
\end{lemma}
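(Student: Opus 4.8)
The plan is to handle item (1) and item (2) by largely standard proof-theoretic techniques, the only genuinely new ingredient being a forgetful translation for the harder half of item (2). For the first claim of item (1) I would begin with the auxiliary observation that whenever $\theta \in \mathsf{Int}$ and $\sigma$ is a substitution of $\mathcal{L}$-formulas for variables, then $\vdash \sigma(\theta)$: one applies $\sigma$ to every line of an $\mathsf{Int}$-proof of $\theta$, noting that, since \eqref{E:a0} is schematic over all of $\mathcal{L}$, $\sigma$ sends each \eqref{E:a0}-instance to another \eqref{E:a0}-instance, and that $\sigma$ commutes with \eqref{E:mp}. Given $\Gamma \models_{\mathsf{Int}} \Delta$, I would then use the completeness and compactness of $\mathsf{Int}$ to extract finite $\Gamma_0 \Subset \Gamma$ and $\Delta_0 \Subset \Delta$ with $\bigwedge\Gamma_0 \to \bigvee\Delta_0 \in \mathsf{Int}$, transfer this theorem along the substitution by the auxiliary observation to get $\vdash \bigwedge\Gamma_0' \to \bigvee\Delta_0'$, and finally combine it with the premises in $\Gamma_0'$ (using intuitionistic conjunction introduction and \eqref{E:mp}) to obtain a derivation from $\Gamma'$ of a disjunction of finitely many members of $\Delta'$, as required by the reading of $\Gamma'\vdash\Delta'$.

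The Deduction Theorem is the usual Hilbert-style induction, and the crucial structural point here is that a derivation from premises may use only \eqref{E:mp}, the congruence rules \eqref{E:RAbox}--\eqref{E:RCdiam} being confined to proofs of theorems. Consequently no rule other than \eqref{E:mp} can occur in a derivation of $\psi$ from $\Gamma,\phi$, and the classical argument applies verbatim. The direction from $\Gamma \vdash \phi\to\psi$ to $\Gamma,\phi \vdash \psi$ is immediate via \eqref{E:mp}, while the converse is proved by induction on the length of the derivation of $\psi$ from $\Gamma,\phi$, appealing to the intuitionistic implicational schemes $\chi \to (\phi\to\chi)$, $\phi\to\phi$, and $(\phi\to(\chi\to\psi))\to((\phi\to\chi)\to(\phi\to\psi))$ — all \eqref{E:a0}-instances — in the premise/theorem case, the $\psi=\phi$ case, and the \eqref{E:mp} case respectively.

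For item (2) the direction $\phi \in \mathsf{Int} \Rightarrow \vdash \phi$ is the special case of item (1) with empty $\Gamma$ and the identity substitution. The converse is a conservativity statement and is the main obstacle; I would prove it by a forgetful translation $t:\mathcal{L}\to\mathcal{L}_i$ that fixes variables, $\bot$ and $\top$, commutes with all connectives (and hence with the defined $\neg$ and $\leftrightarrow$), and sets $t(\psi\boxto\chi):=\top$ and $t(\psi\diamondto\chi):=\bot$. The goal is to show by induction on $\mathbb{ICK}$-proofs that $\vdash \phi$ implies $t(\phi)\in\mathsf{Int}$; since $t$ is the identity on $\mathcal{L}_i$, this gives $\phi\in\mathsf{Int}$ for conditional-free $\phi$. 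The crux — and the place where the choice of $t$ must line up exactly with the shapes of the conditional axioms — is checking the base cases: $t$ turns \eqref{E:a1} into $(\top\wedge\top)\leftrightarrow\top$, \eqref{E:a2} into $(\bot\wedge\top)\to\bot$, \eqref{E:a3} into $\bot\leftrightarrow(\bot\vee\bot)$, \eqref{E:a4} into $(\bot\to\top)\to\top$, \eqref{E:a5} into $\top$, and \eqref{E:a6} into $\neg\bot$, each of which lies in $\mathsf{Int}$, while \eqref{E:a0}-instances are sent to \eqref{E:a0}-instances. For the inductive steps, \eqref{E:mp} is preserved since $t$ commutes with $\to$, and each congruence rule is trivial because the $t$-image of its conclusion is $\top\leftrightarrow\top$ or $\bot\leftrightarrow\bot$, an intuitionistic theorem independently of the premise. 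This completes the conservativity argument and, with it, the lemma.
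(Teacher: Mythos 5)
Your proposal is correct and follows essentially the same route as the paper: Part 1 via substitution-closure of the axiom schemes plus the standard Hilbert-style argument (the paper simply calls this ``trivial''), and Part 2 via exactly the forgetful translation the paper sketches, sending $\psi\boxto\chi$ to $\top$ and $\psi\diamondto\chi$ to $\bot$. Your write-up usefully fills in the details the paper omits --- notably the verification that each of \eqref{E:a1}--\eqref{E:a6} collapses to an $\mathsf{Int}$-theorem and that the congruence rules become vacuous --- but there is no substantive difference in method.
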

\begin{proof}[Proof (a sketch)]
	Part 1 is trivial. As for Part 2, its ($\Leftarrow$)-part is also trivial, and its ($\Rightarrow$)-part follows from an observation that, given a proof of $\phi \in \mathcal{L}_i$ in $\mathbb{ICK}$ we can turn it into a proof in $\mathsf{Int}$ by replacing all its subformulas of the form $\chi\boxto\theta$ with  $\top$ and all its subformulas of the form $\chi\diamondto\theta$ with $\bot$.  
\end{proof}
Turning now to the question of soundness and completeness of $\mathbb{ICK}$ relative to $\mathsf{IntCK}$, we observe, first, that  $\mathbb{ICK}$ only allows us to deduce theorems of $\mathsf{IntCK}$:
\begin{lemma}\label{L:soundness}
	For every $\phi\in\mathcal{L}$, if $\vdash\phi$, then $\phi\in\mathsf{IntCK}$.
\end{lemma}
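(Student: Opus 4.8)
The plan is to argue by induction on the length of a proof of $\phi$ in $\mathbb{ICK}$, since the theorems of the calculus are precisely the formulas obtainable by finitely many applications of \eqref{E:mp} and the four replacement rules to instances of \eqref{E:a0}--\eqref{E:a6}. Recalling that $\phi\in\mathsf{IntCK}$ means exactly $\models\phi$, it suffices to establish two things: (i) every axiom instance is valid, and (ii) each rule of inference preserves validity. Throughout I would work directly from the satisfaction clauses for $\boxto$ and $\diamondto$, together with Lemma \ref{L:chellas-monotonicity} and the frame conditions \eqref{Cond:1}, \eqref{Cond:2}.

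For the base case, the instances of \eqref{E:a0} are valid because restricting attention to $\bot,\top,\wedge,\vee,\to$ turns any pointed Chellas model into an ordinary intuitionistic Kripke model: the persistence requirement of Definition \ref{D:chellas-model} and Lemma \ref{L:chellas-monotonicity} supply exactly the heredity condition, so soundness of $\mathsf{Int}$ applies. Axioms \eqref{E:a1} and \eqref{E:a3} follow from the observation that the $\boxto$-clause is a universal statement over the pairs $(v,u)$ with $v\geq w$ and $R_{\|\phi\|_\mathcal{M}}(v,u)$, whereas the $\diamondto$-clause is existential over the $u$ with $R_{\|\phi\|_\mathcal{M}}(w,u)$; combined with $\|\psi\wedge\chi\|_\mathcal{M}=\|\psi\|_\mathcal{M}\cap\|\chi\|_\mathcal{M}$ and $\|\psi\vee\chi\|_\mathcal{M}=\|\psi\|_\mathcal{M}\cup\|\chi\|_\mathcal{M}$, the distribution of $\forall$ over $\cap$ and of $\exists$ over $\cup$ does the rest. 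Axiom \eqref{E:a5} holds since $u\models\top$ always, and \eqref{E:a6} since $u\models\bot$ never, so $\phi\diamondto\bot$ is satisfied nowhere. Axiom \eqref{E:a2} is checked by fixing $v\geq w$ satisfying the antecedent, taking the witness $u$ for $\phi\diamondto\psi$, and using the reflexive instance $v\geq v$ of $\phi\boxto\chi$ to force $u\models\chi$, so that the same $u$ witnesses $\phi\diamondto(\psi\wedge\chi)$.

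The one genuinely delicate axiom is \eqref{E:a4}, and this is where I expect the main obstacle to lie, since it is the only scheme coupling both modalities through an embedded implication and the only one requiring condition \eqref{Cond:2}. To verify it I would fix $v\geq w$ satisfying the antecedent $(\phi\diamondto\psi)\to(\phi\boxto\chi)$ and aim at $v\models\phi\boxto(\psi\to\chi)$; unwinding the clauses, this amounts to taking arbitrary $v''\geq v$, a world $u$ with $R_{\|\phi\|_\mathcal{M}}(v'',u)$, and $u'\geq u$ with $u'\models\psi$, and deriving $u'\models\chi$. The key move is to apply \eqref{Cond:2} to the chain $v''\mathrel{R_{\|\phi\|_\mathcal{M}}}u\leq u'$, obtaining $v'''\geq v''$ with $R_{\|\phi\|_\mathcal{M}}(v''',u')$; then $u'\models\psi$ yields $v'''\models\phi\diamondto\psi$, and since $v'''\geq v$ the antecedent gives $v'''\models\phi\boxto\chi$, which applied to $R_{\|\phi\|_\mathcal{M}}(v''',u')$ via $v'''\geq v'''$ delivers $u'\models\chi$.

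For the inductive step, \eqref{E:mp} preserves validity by one line using reflexivity of $\leq$: if $\models\phi$ and $\models\phi\to\psi$, then evaluating $\phi\to\psi$ at any $w$ and instantiating the embedded $\forall v\geq w$ at $v=w$ gives $w\models\psi$. For the four replacement rules the crucial observation is that $\models\phi\leftrightarrow\psi$ implies $\|\phi\|_\mathcal{M}=\|\psi\|_\mathcal{M}$ in every model $\mathcal{M}$, again by instantiating each implication at its reflexive point. Given this, \eqref{E:RAbox} and \eqref{E:RAdiam} are immediate because $R_{\|\phi\|_\mathcal{M}}=R_{\|\psi\|_\mathcal{M}}$, so the antecedent position of the modality is literally unchanged, while \eqref{E:RCbox} and \eqref{E:RCdiam} are immediate because the consequent is read off through $u\models\phi$ versus $u\models\psi$, which coincide pointwise. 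This closes the induction and establishes the lemma.
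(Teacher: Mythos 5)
Your proposal is correct and follows exactly the route the paper intends: the paper's own proof of Lemma \ref{L:soundness} is just the remark that one shows all axioms of $\mathbb{ICK}$ valid and all rules validity-preserving, which is precisely the induction you carry out. Your detailed verifications — in particular the use of Lemma \ref{L:chellas-monotonicity} for the instances of \eqref{E:a0}, the reflexivity trick for \eqref{E:a2}, \eqref{E:mp} and the replacement rules, and the application of condition \eqref{Cond:2} in the check of \eqref{E:a4} — are all sound and fill in the details the paper omits.
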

The proof proceeds by the usual method, i.e. we show that all the axioms are valid and that the rules of $\mathbb{ICK}$ preserve the validity. We are now going to show the converse of Lemma \ref{L:soundness}, and we start our work by proving some theorems and derived rules in $\mathbb{ICK}$, which we collect in the following lemma: 
\begin{lemma}\label{L:theorems}
	Let $\phi, \psi, \chi \in \mathcal{L}$. The following theorems and derived rules can be deduced in $\mathbb{ICK}$:
	\begin{align}
		\phi &\vDdash (\psi\boxto \phi)\label{E:Rnec}\tag{Nec}\\
		(\phi \to \psi) &\vDdash ((\chi\boxto\phi)\to(\chi\boxto\psi))\label{E:Rmbox}\tag{RM$\Box$}\\
		(\phi \to \psi) &\vDdash ((\chi\diamondto\phi)\to(\chi\diamondto\psi))\label{E:Rmdiam}\tag{RM$\Diamond$}\\
		(\phi\boxto(\psi \to \chi))&\to((\phi\boxto\chi)\to(\phi\boxto\chi))\label{E:T1}\tag{T1}\\
		(\phi\boxto(\psi \to \chi))&\to((\phi\diamondto\psi)\to(\phi\diamondto\chi))\label{E:T2}\tag{T2}\\
		(\phi\boxto\psi)&\to((\phi\diamondto(\psi\to\chi))\to(\phi\diamondto\chi))\label{E:T3}\tag{T3}\\
		\neg(\phi\diamondto\psi)&\leftrightarrow(\phi\boxto\neg\psi)\label{E:T4}\tag{T4}
	\end{align}
\end{lemma}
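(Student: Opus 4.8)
The plan is to derive everything from the additive axioms \eqref{E:a1}, \eqref{E:a3}, the congruence rules \eqref{E:RCbox}, \eqref{E:RCdiam}, the mixed axioms \eqref{E:a2}, \eqref{E:a4}, \eqref{E:a6}, together with Lemma \ref{L:int}, which lets me import arbitrary intuitionistic propositional reasoning and the Deduction Theorem into $\mathbb{ICK}$. The backbone of the whole argument is a single recurring move: whenever I need to perform ``modus ponens under a modality'', I first use an additive axiom to collapse two modal premises into one modality applied to a conjunction, and then I apply a monotonicity rule to the intuitionistic theorem $(\psi\wedge(\psi\to\chi))\to\chi$. So I would establish the monotonicity rules first, since \eqref{E:T1}--\eqref{E:T3} depend on them.

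For \eqref{E:Rnec}, from $\phi$ intuitionistic logic gives $\top\leftrightarrow\phi$; then \eqref{E:RCbox} yields $(\psi\boxto\top)\leftrightarrow(\psi\boxto\phi)$, and \eqref{E:a5} discharges the left-hand side. For \eqref{E:Rmbox}, from $\phi\to\psi$ intuitionistic logic gives $\phi\leftrightarrow(\phi\wedge\psi)$; applying \eqref{E:RCbox} and then \eqref{E:a1} produces $(\chi\boxto\phi)\leftrightarrow((\chi\boxto\phi)\wedge(\chi\boxto\psi))$, from which $(\chi\boxto\phi)\to(\chi\boxto\psi)$ follows propositionally. Dually, for \eqref{E:Rmdiam}, from $\phi\to\psi$ I get $\psi\leftrightarrow(\phi\vee\psi)$, and \eqref{E:RCdiam} with \eqref{E:a3} gives $(\chi\diamondto\psi)\leftrightarrow((\chi\diamondto\phi)\vee(\chi\diamondto\psi))$, whence $(\chi\diamondto\phi)\to(\chi\diamondto\psi)$. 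The only non-routine insight here is that bare congruence plus the additive axioms already deliver monotonicity.

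With these in hand, \eqref{E:T1}--\eqref{E:T3} are uniform. As printed, \eqref{E:T1} is an instance of the propositional tautology $\alpha\to(\beta\to\beta)$ and hence immediate from \eqref{E:a0}; reading it as the intended $K$-schema $(\phi\boxto(\psi\to\chi))\to((\phi\boxto\psi)\to(\phi\boxto\chi))$, I would, via the Deduction Theorem, assume $\phi\boxto(\psi\to\chi)$ and $\phi\boxto\psi$, merge them with \eqref{E:a1} into $\phi\boxto((\psi\to\chi)\wedge\psi)$, and apply \eqref{E:Rmbox} to the theorem $((\psi\to\chi)\wedge\psi)\to\chi$ to obtain $\phi\boxto\chi$. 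For \eqref{E:T2}, assuming $\phi\boxto(\psi\to\chi)$ and $\phi\diamondto\psi$, the instance of \eqref{E:a2} with second coordinate $\psi\to\chi$ gives $\phi\diamondto(\psi\wedge(\psi\to\chi))$, and \eqref{E:Rmdiam} on $(\psi\wedge(\psi\to\chi))\to\chi$ finishes. \eqref{E:T3} is the same computation with the two arguments of \eqref{E:a2} swapped: from $\phi\boxto\psi$ and $\phi\diamondto(\psi\to\chi)$ I get $\phi\diamondto((\psi\to\chi)\wedge\psi)$, and again conclude by \eqref{E:Rmdiam}.

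Finally \eqref{E:T4}. The direction $(\phi\boxto\neg\psi)\to\neg(\phi\diamondto\psi)$ is the instance of \eqref{E:T2} with $\chi:=\bot$, namely $(\phi\boxto(\psi\to\bot))\to((\phi\diamondto\psi)\to(\phi\diamondto\bot))$, composed with \eqref{E:a6}, which refutes the final consequent $\phi\diamondto\bot$. For the converse $\neg(\phi\diamondto\psi)\to(\phi\boxto\neg\psi)$ I would use \eqref{E:a4} with $\chi:=\bot$, giving $((\phi\diamondto\psi)\to(\phi\boxto\bot))\to(\phi\boxto\neg\psi)$; since intuitionistically $\bot\to(\phi\boxto\bot)$, any proof of $\neg(\phi\diamondto\psi)=(\phi\diamondto\psi)\to\bot$ strengthens to $(\phi\diamondto\psi)\to(\phi\boxto\bot)$, which feeds \eqref{E:a4}. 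I expect this $(\Rightarrow)$-direction to be the main obstacle, as it is the only point where \eqref{E:a4} is genuinely needed and where the explosion step $\bot\to(\phi\boxto\bot)$ must be noticed so as to match the antecedent shape of \eqref{E:a4}; every other item reduces to an additive axiom plus the single detachment tautology carried under a modality.
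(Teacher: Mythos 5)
Your proposal is correct and follows essentially the same route as the paper's own derivations in its appendix: (Nec), (RM$\Box$), (RM$\Diamond$) from the congruence rules plus (A1)/(A3)/(A5), then (T1)--(T3) by collapsing premises with (A1)/(A2) and applying monotonicity to the detachment tautology $(\psi\wedge(\psi\to\chi))\to\chi$, and finally (T4) from (T2) with $\chi:=\bot$ plus (A6) in one direction and (A4) plus $\bot\to(\phi\boxto\bot)$ in the other. The only (harmless) divergences are that you instantiate (A2) directly for (T3) where the paper routes through (T2), and that you explicitly note the misprint in (T1) as stated --- its consequent should read $(\phi\boxto\psi)\to(\phi\boxto\chi)$ --- which is indeed the schema the paper's appendix actually proves.
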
 
The sketch of its proof is relegated to Appendix \ref{A:1}.

A bi-set $(\Gamma, \Delta)\in \mathcal{P}(\mathcal{L})\times\mathcal{P}(\mathcal{L})$ is called \textit{consistent} iff for no $\Delta' \Subset \Delta$ do we have that $\Gamma \vdash \bigvee\Delta'$.\footnote{As for the limiting cases, we assume that $\bigvee\emptyset = \bot$ and that $\bigwedge\emptyset = \top$.} Note that, since $\mathbb{ICK}$ extends $\mathsf{Int}$, and also in view of Lemma \ref{L:int}.1, this definition allows for the following equivalent form:
\begin{lemma}\label{L:alt-consistency}
	A bi-set $(\Gamma, \Delta)\in \mathcal{P}(\mathcal{L})\times\mathcal{P}(\mathcal{L})$ is inconsistent iff, for some $m,n\in \omega$ some $\phi_1,\ldots,\phi_n\in \Gamma$ and some $\psi_1,\ldots,\psi_m\in \Delta$ we have: $
	\bigwedge^n_{i = 1}\phi_i\vdash\bigvee^m_{j = 1}\psi_j$, or, equivalently, $\vdash
	\bigwedge^n_{i = 1}\phi_i\to\bigvee^m_{j = 1}\psi_j$.
\end{lemma}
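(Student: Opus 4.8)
The plan is to prove Lemma~\ref{L:alt-consistency} by unwinding the definition of consistency and invoking the propositional machinery already available through Lemma~\ref{L:int}. First I would recall that, by definition, $(\Gamma, \Delta)$ is inconsistent iff there exists a finite $\Delta' \Subset \Delta$ with $\Gamma \vdash \bigvee\Delta'$. Since a derivation is by definition finite and uses only finitely many premises, $\Gamma \vdash \bigvee\Delta'$ means that there are finitely many $\phi_1,\ldots,\phi_n \in \Gamma$ with $\{\phi_1,\ldots,\phi_n\} \vdash \bigvee\Delta'$. Writing $\Delta' = \{\psi_1,\ldots,\psi_m\}$, the goal is to massage the statement $\{\phi_1,\ldots,\phi_n\} \vdash \bigvee^m_{j=1}\psi_j$ into the single-formula entailment $\bigwedge^n_{i=1}\phi_i \vdash \bigvee^m_{j=1}\psi_j$ and then into the theoremhood $\vdash \bigwedge^n_{i=1}\phi_i \to \bigvee^m_{j=1}\psi_j$.

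The second of these last two equivalences is immediate from the Deduction Theorem (Lemma~\ref{L:int}.1), which lets me move the single antecedent $\bigwedge^n_{i=1}\phi_i$ across the turnstile; I would note the limiting case $n=0$, where $\bigwedge\emptyset = \top$ and the entailment from $\top$ is just provability. For the first equivalence, I would use that $\mathbb{ICK}$ extends $\mathsf{Int}$, so the propositional facts $\phi_1,\ldots,\phi_n \vdash \bigwedge^n_{i=1}\phi_i$ and $\bigwedge^n_{i=1}\phi_i \vdash \phi_i$ (for each $i$) are available as instances of intuitionistic tautologies lifted by Lemma~\ref{L:int}.1. From these, a routine cut/transitivity argument on derivations shows that $\{\phi_1,\ldots,\phi_n\} \vdash \theta$ holds iff $\bigwedge^n_{i=1}\phi_i \vdash \theta$, for any $\theta$; applying this with $\theta = \bigvee^m_{j=1}\psi_j$ gives exactly what is needed.

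The only genuine content beyond routine bookkeeping is matching the quantifier ``for some $\Delta' \Subset \Delta$'' in the definition with the existentially quantified finite lists $\psi_1,\ldots,\psi_m$ in the statement, and observing that an arbitrary derivation from $\Gamma$ uses only a finite sub-multiset of $\Gamma$, which I may take to be the list $\phi_1,\ldots,\phi_n$ (possibly with $n=0$). I would make explicit that the disjunction $\bigvee\Delta'$ does not depend on the ordering or multiplicity of its disjuncts up to provable equivalence, so identifying $\Delta'$ with a finite list is harmless. I expect the main (minor) obstacle to be the careful treatment of the limiting cases $m=0$ and $n=0$: when $m=0$, $\bigvee\emptyset = \bot$ and inconsistency reduces to $\Gamma \vdash \bot$, and when $n=0$ the antecedent conjunction collapses to $\top$, so I would verify that the stated equivalences remain correct in these degenerate situations rather than presenting them as a separate case.
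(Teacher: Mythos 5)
Your proposal is correct and follows exactly the route the paper intends: the paper states this lemma without a detailed proof, presenting it as an immediate consequence of the fact that $\mathbb{ICK}$ extends $\mathsf{Int}$ together with Lemma~\ref{L:int}.1 (including the Deduction Theorem), which are precisely the ingredients you use, along with the same treatment of the limiting cases $\bigvee\emptyset = \bot$ and $\bigwedge\emptyset = \top$. Your elaboration of the finiteness-of-derivations and cut/transitivity bookkeeping is a faithful filling-in of what the paper leaves implicit.
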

Furthermore, the bi-set $(\Gamma, \Delta)$ is called \textit{complete} iff $\Gamma\cup\Delta = \mathcal{L}$; and it is called \textit{maximal} iff it is both complete and consistent. The next two lemmas present some properties of the consistent and maximal bi-sets, respectively:
\begin{lemma}\label{L:consistent}
	Let	$(\Gamma, \Delta)\in \mathcal{P}(\mathcal{L})\times\mathcal{P}(\mathcal{L})$ be consistent. Then the following statements hold:
	\begin{enumerate}
		\item For every $\phi \in \mathcal{L}$, either $(\Gamma \cup \{\phi\}, \Delta)$ or $(\Gamma \cup \{\phi\}, \Delta)$ is consistent.
		
		\item For every $\phi \to \psi \in \Delta$, $(\Gamma \cup \{\phi\}, \{\psi\})$ is consistent.
		
		\item For every $\phi\boxto\psi \in \Delta$, the bi-set $(\{\chi\mid\phi\boxto\chi \in \Gamma\},\{\psi\})$ is consistent.
		
		\item For every $\phi\diamondto\psi \in \Gamma$, the bi-set $(\{\psi\}\cup\{\chi\mid\phi\boxto\chi \in \Gamma\},\{\theta\mid\phi\diamondto\theta\in\Delta\})$ is consistent.
	\end{enumerate}
\end{lemma}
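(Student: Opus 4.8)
The plan is to treat all four parts contrapositively and uniformly: in each case I assume that the displayed bi-set is \emph{inconsistent}, invoke the syntactic characterization of inconsistency from Lemma \ref{L:alt-consistency} to extract a single provable implication, and then transform it — using the axioms and derived rules of Lemma \ref{L:theorems} together with the Deduction Theorem (Lemma \ref{L:int}.1) — into a witness that $(\Gamma, \Delta)$ itself is inconsistent, contradicting the hypothesis. Throughout I abbreviate finite conjunctions as $\bigwedge_i \chi_i$ and finite disjunctions as $\bigvee_j \theta_j$, treating the empty conjunction as $\top$ (via \eqref{E:a5}) and the empty disjunction as $\bot$.

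For Part 1 (the splitting lemma, i.e.\ either $(\Gamma \cup \{\phi\}, \Delta)$ or $(\Gamma, \Delta \cup \{\phi\})$ is consistent) I assume both are inconsistent. Lemma \ref{L:alt-consistency} then yields finite $\gamma = \bigwedge\Gamma_0$, $\gamma' = \bigwedge\Gamma_1$ with $\Gamma_0,\Gamma_1 \Subset \Gamma$, and finite disjunctions $\delta,\delta'$ of elements of $\Delta$ with $\vdash (\gamma \wedge \phi) \to \delta$ and $\vdash \gamma' \to (\delta' \vee \phi)$. Reasoning by cases on the proved disjunction $\delta' \vee \phi$ — which is intuitionistically legitimate — gives $\vdash (\gamma \wedge \gamma') \to (\delta \vee \delta')$, exhibiting inconsistency of $(\Gamma, \Delta)$. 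Part 2 is quicker: if $(\Gamma \cup \{\phi\}, \{\psi\})$ is inconsistent then $\Gamma \cup \{\phi\} \vdash \psi$, so some $\gamma = \bigwedge\Gamma_0$ with $\Gamma_0 \Subset \Gamma$ satisfies $\vdash (\gamma \wedge \phi) \to \psi$; the Deduction Theorem rewrites this as $\Gamma \vdash \phi \to \psi$, and since $\phi \to \psi \in \Delta$ this already witnesses inconsistency of $(\Gamma, \Delta)$.

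Part 3 is where the modal axioms enter. Assuming $(\{\chi \mid \phi \boxto \chi \in \Gamma\}, \{\psi\})$ inconsistent, I obtain $\chi_1, \ldots, \chi_n$ with each $\phi \boxto \chi_i \in \Gamma$ and $\vdash (\bigwedge_i \chi_i) \to \psi$. Collapsing the hypotheses $\phi \boxto \chi_i$ through \eqref{E:a1} (and \eqref{E:a5} when $n = 0$) gives $\Gamma \vdash \phi \boxto \bigwedge_i \chi_i$; applying \eqref{E:Rmbox} to the provable implication yields the theorem $(\phi \boxto \bigwedge_i \chi_i) \to (\phi \boxto \psi)$, so by \eqref{E:mp} we reach $\Gamma \vdash \phi \boxto \psi$, contradicting $\phi \boxto \psi \in \Delta$.

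Part 4 is the main obstacle, since it must coordinate both modalities. From inconsistency of $(\{\psi\} \cup \{\chi \mid \phi \boxto \chi \in \Gamma\}, \{\theta \mid \phi \diamondto \theta \in \Delta\})$ I extract $\chi_1, \ldots, \chi_n$ (each $\phi \boxto \chi_i \in \Gamma$) and $\theta_1, \ldots, \theta_m$ (each $\phi \diamondto \theta_j \in \Delta$) with $\vdash (\psi \wedge \bigwedge_i \chi_i) \to \bigvee_j \theta_j$, which I rearrange to $\vdash (\bigwedge_i \chi_i) \to (\psi \to \bigvee_j \theta_j)$. Exactly as in Part 3 I push the box hypotheses inside, via \eqref{E:a1}/\eqref{E:a5} and \eqref{E:Rmbox}, to get $\Gamma \vdash \phi \boxto (\psi \to \bigvee_j \theta_j)$. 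The decisive step is then the interaction theorem \eqref{E:T2}, namely $(\phi \boxto (\psi \to \theta)) \to ((\phi \diamondto \psi) \to (\phi \diamondto \theta))$: instantiating $\theta := \bigvee_j \theta_j$ and discharging the hypothesis $\phi \diamondto \psi \in \Gamma$ by \eqref{E:mp} gives $\Gamma \vdash \phi \diamondto \bigvee_j \theta_j$. Finally I distribute the diamond over the disjunction by iterating \eqref{E:a3} (when $m = 0$ the formula is $\phi \diamondto \bot$, and \eqref{E:a6} produces $\Gamma \vdash \bot$ outright), obtaining $\Gamma \vdash \bigvee_j (\phi \diamondto \theta_j)$. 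Since each $\phi \diamondto \theta_j$ lies in $\Delta$, this is precisely a witness that $(\Gamma, \Delta)$ is inconsistent, completing the contradiction. I expect the careful bookkeeping of the empty-index cases and the correct chaining of \eqref{E:a1}, \eqref{E:Rmbox}, \eqref{E:T2}, \eqref{E:a3} and \eqref{E:a6} in Part 4 to be the only genuinely delicate points.
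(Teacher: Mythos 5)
Your proposal is correct and follows essentially the same route as the paper's own proof: Parts 1--2 by the standard intuitionistic propositional arguments, Part 3 by collecting the boxed hypotheses via \eqref{E:a1} and applying \eqref{E:Rmbox}, and Part 4 by deriving $\Gamma\vdash\phi\boxto(\psi\to\theta)$ and then chaining \eqref{E:T2}, the hypothesis $\phi\diamondto\psi\in\Gamma$, and \eqref{E:a3} to contradict consistency of $(\Gamma,\Delta)$. Your explicit treatment of the empty-index cases via \eqref{E:a5} and \eqref{E:a6} is a harmless refinement the paper leaves implicit in its convention $\bigwedge\emptyset=\top$, $\bigvee\emptyset=\bot$.
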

\begin{proof}
Parts 1 and 2 are proved as in the case of intuitionistic propositional logic. As for Part 3, assume that  $\phi\boxto\psi \in \Delta$, and assume, towards  contradiction, that the bi-set $(\{\chi\mid\phi\boxto\chi \in \Gamma\},\{\psi\})$ is inconsistent. Then there must be $\phi\boxto\chi_1,\ldots,\phi\boxto\chi_n \in \Gamma$ such that, for $\chi:= \bigwedge^n_{i= 1}\chi_i$, we have $\chi\vdash \psi$. We reason as follows:
\begin{align}
	&\Gamma\vdash\phi\boxto\chi\label{E:con3}&&\text{by \eqref{E:a1}}\\
	&\vdash\chi\to\psi\label{E:con4}&&\text{by Lemma \ref{L:int}.1}\\
	&\vdash(\phi\boxto\chi)\to(\phi\boxto\psi)\label{E:con5}&&\text{by \eqref{E:con4}, \eqref{E:Rmbox}}\\
	&\Gamma\vdash\phi\boxto\psi\label{E:con6}&&\text{by \eqref{E:con3},\eqref{E:con5}}
\end{align}
The assumption that $\phi\boxto\psi \in \Delta$, together with \eqref{E:con6}, clearly contradicts the consistency of $(\Gamma,\Delta)$. The obtained contradiction shows that $(\{\chi\mid\phi\boxto\chi \in \Gamma\},\{\psi\})$ must be consistent.

Finally, as for Part 4, assume that $\phi\diamondto\psi \in \Gamma$, and assume, towards contradiction, that the bi-set $(\{\psi\}\cup\{\chi\mid\phi\boxto\chi \in \Gamma\},\{\theta\mid\phi\diamondto\theta\in\Delta\})$ is inconsistent. Then there must be some $\phi\boxto\chi_1,\ldots,\phi\boxto\chi_n \in \Gamma$ and $\phi\diamondto\theta_1,\ldots,\phi\diamondto\theta_m \in \Delta$ such that, for $\chi:= \bigwedge^n_{i= 1}\chi_i$ and $\theta:= \bigvee^m_{j= 1}\theta_j$,  we have $\chi,\psi\vdash \theta$. We then reason as follows:
\begin{align}
	&\Gamma\vdash\phi\boxto\chi\label{E:cons4}&&\text{by \eqref{E:a1}}\\
	&\vdash\chi\to(\psi\to \theta)\label{E:cons5}&&\text{by Lemma \ref{L:int}.1}\\
	&\vdash(\phi\boxto\chi)\to(\phi\boxto(\psi\to \theta))\label{E:cons6}&&\text{by \eqref{E:cons5}, \eqref{E:Rmbox}}\\
	&\Gamma\vdash(\phi\diamondto\psi)\to (\phi\diamondto\theta)\label{E:cons8}&&\text{by \eqref{E:cons4},\eqref{E:cons6},\eqref{E:T2}}\\
	&\Gamma\vdash\phi\diamondto\theta\label{E:cons9}&&\text{by \eqref{E:cons8},$\phi\diamondto\psi \in \Gamma$}\\
	&\Gamma\vdash\phi\diamondto\theta_1\vee\ldots\vee\phi\diamondto\theta_m\label{E:cons10}&&\text{by \eqref{E:cons9}, \eqref{E:a3}}
\end{align}
It follows now from \eqref{E:cons10} that $(\Gamma,\Delta)$ must be inconsistent, which contradicts our initial assumption. The obtained contradiction shows that the bi-set $(\{\psi\}\cup\{\chi\mid\phi\boxto\chi \in \Gamma\},\{\theta\mid\phi\diamondto\theta\in\Delta\})$ must be, in fact, consistent. 
\end{proof} 
\begin{lemma}\label{L:maximal}
	Let	$(\Gamma, \Delta), (\Gamma_0,\Delta_0), (\Gamma_1,\Delta_1) \in \mathcal{P}(\mathcal{L})\times\mathcal{P}(\mathcal{L})$ be maximal, let $\phi,\psi\in\mathcal{L}$. Then the following statements are true:
	\begin{enumerate}
		\item If $\Gamma\vdash\phi$, then $\phi\in \Gamma$.
		
		\item $\phi\wedge\psi\in\Gamma$ iff $\phi, \psi\in \Gamma$.
		
		\item $\phi\vee\psi \in \Gamma$ iff $\phi \in \Gamma$ or $\psi\in\Gamma$.
		
		\item If $\phi\to\psi, \phi \in \Gamma$, then $\psi \in \Gamma$.
		
		\item If $\Gamma_0 \subseteq \Gamma$, $\{\psi\mid \phi\boxto\psi\in \Gamma_0\}\subseteq \Gamma_1$, and $\{\phi\diamondto\psi\mid \psi\in \Gamma_1\}\subseteq \Gamma_0$ then
		 
		\noindent$(\Gamma_1 \cup \{\psi\mid\phi\boxto\psi\in \Gamma\}, \{\psi\mid\phi\diamondto\psi\in \Delta\})$ is consistent.
		
		\item If $\Gamma_1 \subseteq \Gamma$, $\{\psi\mid \phi\boxto\psi\in \Gamma_0\}\subseteq \Gamma_1$, and $\{\phi\diamondto\psi\mid \psi\in \Gamma_1\}\subseteq \Gamma_0$ then
		 
		\noindent$(\Gamma_0 \cup \{\phi\diamondto\psi\mid\psi\in \Gamma\}, \{\phi\boxto\psi\mid\psi\in \Delta\})$ is consistent.
	\end{enumerate}
\end{lemma}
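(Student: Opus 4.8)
The plan is to dispatch parts 1--4 as the routine facts about prime theories and to spend the real effort on parts 5 and 6, which are exactly the proof-theoretic shadows of the confluence conditions \eqref{Cond:1} and \eqref{Cond:2} needed in the canonical model. Throughout I will use two consequences of maximality: first, $\Gamma\cap\Delta=\emptyset$ (otherwise a singleton $\Delta'$ witnesses inconsistency), and second, $\Gamma$ is deductively closed, since if $\Gamma\vdash\phi$ but $\phi\notin\Gamma$ then completeness puts $\phi\in\Delta$ and $\{\phi\}\Subset\Delta$ refutes consistency. Closure gives part 1 at once, and parts 2 and 4 follow by closing under $\vdash$. Part 3 is the primeness of $\Gamma$: if $\phi,\psi\in\Delta$ yet $\phi\vee\psi\in\Gamma$, then $\{\phi,\psi\}\Subset\Delta$ with $\Gamma\vdash\bigvee\{\phi,\psi\}$, again contradicting consistency.

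For both parts 5 and 6 I would argue by contradiction, assuming the displayed bi-set inconsistent and invoking Lemma \ref{L:alt-consistency} to extract a single entailment between finite conjunctions and disjunctions drawn from the two components, and then manufacturing a formula lying in both $\Gamma$ and $\Delta$. In part 5 the inconsistency supplies $\psi_1,\dots,\psi_k\in\Gamma_1$, box-formulas $\phi\boxto\chi_1,\dots,\phi\boxto\chi_n\in\Gamma$, and $\phi\diamondto\theta_1,\dots,\phi\diamondto\theta_m\in\Delta$ with $\alpha\wedge\chi\vdash\theta$, where $\alpha:=\bigwedge_i\psi_i$, $\chi:=\bigwedge_j\chi_j$, $\theta:=\bigvee_l\theta_l$. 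The subtle point, easy to overlook, is to read the empty conjunction as $\top$, so that $\alpha\in\Gamma_1$ holds even when $k=0$; only then does the hypothesis $\{\phi\diamondto\psi\mid\psi\in\Gamma_1\}\subseteq\Gamma_0$, combined with $\Gamma_0\subseteq\Gamma$, yield $\phi\diamondto\alpha\in\Gamma$ (this is precisely what lets $\Gamma$ inherit a $\phi$-successor from $\Gamma_0$). Meanwhile \eqref{E:a1} collapses the boxes into $\phi\boxto\chi\in\Gamma$. Now \eqref{E:a2} gives $\phi\diamondto(\alpha\wedge\chi)\in\Gamma$; feeding $\vdash(\alpha\wedge\chi)\to\theta$ into \eqref{E:Rmdiam} gives $\phi\diamondto\theta\in\Gamma$; and \eqref{E:a3} together with part 3 forces some $\phi\diamondto\theta_l\in\Gamma$, contradicting $\phi\diamondto\theta_l\in\Delta$.

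Part 6 is where I expect the real difficulty. The inconsistency now hands me $\gamma\in\Gamma_0$ (a finite conjunction), diamonds $\phi\diamondto\rho_1,\dots,\phi\diamondto\rho_p$ with $\rho_i\in\Gamma$, and boxes $\phi\boxto\sigma_1,\dots,\phi\boxto\sigma_q$ with $\sigma_j\in\Delta$, such that $\gamma,\phi\diamondto\rho_1,\dots,\phi\diamondto\rho_p\vdash\bigvee_j(\phi\boxto\sigma_j)$. My first move is to compress the right-hand side: since $\vdash\sigma_j\to\bigvee_l\sigma_l$, rule \eqref{E:Rmbox} and $\vee$-elimination give $\vdash\bigvee_j(\phi\boxto\sigma_j)\to\phi\boxto\sigma$ for $\sigma:=\bigvee_j\sigma_j$, while primeness (part 3) keeps $\sigma\in\Delta$; so I may assume a single box $\phi\boxto\sigma$ on the right (with the degenerate $q=0$ case absorbed by $\sigma=\bot\in\Delta$). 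The heart of the argument, and the step I expect to need the most care, is that the diamonds cannot be merged the way the boxes just were, so I must strip them off one at a time with \eqref{E:a4}: from $\gamma,\phi\diamondto\rho_1,\dots,\phi\diamondto\rho_{p-1}\vdash\phi\diamondto\rho_p\to\phi\boxto\sigma$ the \eqref{E:a4}-instance $(\phi\diamondto\rho_p\to\phi\boxto\sigma)\to\phi\boxto(\rho_p\to\sigma)$ yields $\gamma,\dots,\phi\diamondto\rho_{p-1}\vdash\phi\boxto(\rho_p\to\sigma)$, and iterating absorbs every diamond into the scope of the box until $\gamma\vdash\phi\boxto(\rho_1\to\cdots\to\rho_p\to\sigma)$. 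Since $\gamma\in\Gamma_0$, part 1 places $\phi\boxto(\rho_1\to\cdots\to\rho_p\to\sigma)\in\Gamma_0$, the hypothesis $\{\psi\mid\phi\boxto\psi\in\Gamma_0\}\subseteq\Gamma_1$ with $\Gamma_1\subseteq\Gamma$ transfers $\rho_1\to\cdots\to\rho_p\to\sigma$ into $\Gamma$, and $p$ applications of part 4 against $\rho_1,\dots,\rho_p\in\Gamma$ deliver $\sigma\in\Gamma$, contradicting $\sigma\in\Delta$. It is worth noting the pleasant duality that emerges: part 5 uses only the diamond half of the relation between $(\Gamma_0,\Delta_0)$ and $(\Gamma_1,\Delta_1)$, whereas part 6 uses only the box half.
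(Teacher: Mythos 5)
Your proof is correct, and its overall shape --- refute by contradiction, extract a finite entailment via Lemma \ref{L:alt-consistency}, then push formulas through the modal axioms until something lands in both $\Gamma$ and $\Delta$ --- is exactly the paper's. The differences are tactical. In Part 5 the paper transfers the implication $\chi\to\theta$ (placed in $\Gamma_1$ by deductive closure) across the diamond hypothesis and finishes with the derived theorem \eqref{E:T3}, whereas you transfer the conjunction $\alpha$ of the $\Gamma_1$-formulas and finish with \eqref{E:a2} plus \eqref{E:Rmdiam}; since \eqref{E:T3} is itself derived from \eqref{E:a2}, your route is marginally more direct, and your explicit attention to the $k=0$ case (reading $\bigwedge\emptyset=\top$, so that $\phi\diamondto\top\in\Gamma$) makes visible a degenerate case the paper leaves to its stated conventions. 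In Part 6 the divergence is more substantial: you strip the diamonds one at a time by iterating \eqref{E:a4} under repeated applications of the Deduction Theorem, ending with $\phi\boxto(\rho_1\to\cdots\to\rho_p\to\sigma)\in\Gamma_0$ and then discharging with Part 4; the paper instead compresses \emph{all} the diamonds at once, using \eqref{E:Rmdiam} to get $\vdash(\phi\diamondto\chi)\to\bigwedge_j(\phi\diamondto\chi_j)$ for $\chi=\bigwedge_j\chi_j$ (strengthening the antecedent is all that is needed there), so that a \emph{single} application of \eqref{E:a4} yields $\phi\boxto(\chi\to\theta)\in\Gamma_0$. Your remark that ``the diamonds cannot be merged the way the boxes just were'' is therefore slightly misleading: while $\bigwedge_j(\phi\diamondto\chi_j)$ is indeed not equivalent to $\phi\diamondto\bigwedge_j\chi_j$, the diamonds sit in antecedent position, where only the implication from the merged diamond to the conjunction of diamonds is required --- and that direction does hold. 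So the paper's single-shot version is available to you; your iterated version costs a few more lines but is equally sound, and it has the small expository virtue of exhibiting \eqref{E:a4} as a rule for absorbing one diamond premise at a time.
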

\begin{proof}
	The Parts 1--4 are handled as in the case of $\mathsf{Int}$. E.g., for Part 3 observe that, if $\phi\vee\psi \in \Gamma$ and $\phi,\psi\in \Delta$, then we must have $\Gamma\vdash\phi\vee\psi$, thus contradicting the consistency of $(\Gamma, \Delta)$.
	
	As for Part 5, assume its hypothesis and suppose, towards contradiction that $(\Gamma_1 \cup \{\psi\mid\phi\boxto\psi\in \Gamma\}, \{\psi\mid\phi\diamondto\psi\in \Delta\})$ is inconsistent. Then there must exist some $\psi_1,\ldots,\psi_n \in \Gamma_1$, $\phi\boxto\chi_1,\ldots,\phi\boxto\chi_m\in \Gamma$ and some $\phi\diamondto\theta_1,\ldots,\phi\diamondto\theta_k\in \Delta$, such that, for $\psi:= \bigwedge^n_{i=1}\psi_i$, $\chi:= \bigwedge^m_{j=1}\chi_j$, and $\theta:= \bigvee^k_{r=1}\theta_r$ we have $\psi,\chi\vdash\theta$. But then:
	\begin{align}
		&\psi\vdash\chi\to\theta\label{E:max4}&&\text{Lemma \ref{L:int}.1}\\
		&\chi\to\theta\in \Gamma_1\label{E:max6}&&\text{\eqref{E:max4}, Part 1}\\
		&\phi\diamondto(\chi\to\theta)\in \Gamma_0\label{E:max7}&&\text{\eqref{E:max6}, $\{\phi\diamondto\psi\mid \psi\in \Gamma_1\}\subseteq \Gamma_0$}\\
		&\phi\diamondto(\chi\to\theta)\in \Gamma\label{E:max8}&&\text{\eqref{E:max7}, $\Gamma_0\subseteq \Gamma_1$}\\
	&\Gamma\vdash (\phi\boxto\chi)\to(\phi\diamondto\theta)\label{E:max10}&&\text{\eqref{E:max8}, \eqref{E:T3}}\\
	&\Gamma\vdash\phi\boxto\chi\label{E:max11}&&\text{\eqref{E:a1}}\\
	&\Gamma\vdash\phi\diamondto\theta\label{E:max12}&&\text{\eqref{E:max10}, \eqref{E:max11}}\\
	&\Gamma\vdash(\phi\diamondto\theta_1)\vee\ldots\vee(\phi\diamondto\theta_k)\label{E:max13}&&\text{\eqref{E:max12}, \eqref{E:a3}}
\end{align}
It follows now from \eqref{E:max13}, that $(\Gamma, \Delta)$ is not consistent and thus also not maximal, contrary to our initial assumption. The obtained contradiction shows that the bi-set 

\noindent$(\Gamma_1 \cup \{\psi\mid\phi\boxto\psi\in \Gamma\}, \{\psi\mid\phi\diamondto\psi\in \Delta\})$ must have been consistent.

For Part 6,  assume its hypothesis and suppose that $(\Gamma_0 \cup \{\phi\diamondto\psi\mid\psi\in \Gamma\}, \{\phi\boxto\psi\mid\psi\in \Delta\})$ is inconsistent. Then there must exist some $\psi_1,\ldots,\psi_n \in \Gamma_0$, $\chi_1,\ldots,\chi_m\in \Gamma$, and some $\theta_1,\ldots,\theta_k\in \Delta$, such that
$\bigwedge^n_{i=1}\psi_i,\bigwedge^m_{j = 1}(\phi\diamondto\chi_j)\vdash\bigvee^k_{r = 1}(\phi\boxto\theta_r)$.
Again, we set $\psi:= \bigwedge^n_{i=1}\psi_i$, $\chi:= \bigwedge^m_{j=1}\chi_j$, and $\theta:= \bigvee^k_{r=1}\theta_r$, and reason as follows:
\begin{align}
	&\Gamma_0\vdash\bigwedge^m_{j = 1}(\phi\diamondto\chi_j)\to\bigvee^k_{r = 1}(\phi\boxto\theta_r)\label{E:maxi4}&&\text{Lemma \ref{L:int}.1}\\
	&\vdash(\phi\diamondto\chi)\to\bigwedge^m_{j = 1}(\phi\diamondto\chi_j)\label{E:maxi7}&&\text{\eqref{E:a0}, \eqref{E:mp}, \eqref{E:Rmdiam}}\\
	&\vdash\bigvee^k_{r = 1}(\phi\boxto\theta_j)\to(\phi\boxto \theta)\label{E:maxi9}&&\text{\eqref{E:a0}, \eqref{E:mp}, \eqref{E:Rmbox}}\\
	&\Gamma_0\vdash(\phi\diamondto\chi)\to(\phi\boxto \theta)\label{E:maxi10}&&\text{\eqref{E:maxi4},\eqref{E:maxi7},\eqref{E:maxi9}}\\
	&\phi\boxto(\chi\to\theta)\in\Gamma_0\label{E:maxi12}&&\text{\eqref{E:maxi10},\eqref{E:a4}, Part 1}\\
	&(\chi\to\theta)\in\Gamma_1\label{E:maxi13}&&\text{\eqref{E:maxi12}, $\{\psi\mid \phi\boxto\psi\in \Gamma_0\}\subseteq \Gamma_1$}
\end{align}
By $\Gamma_1\subseteq \Gamma$, we know then that also $(\chi\to\theta)\in\Gamma$. Now, since clearly $\Gamma\vdash\chi$, it follows that $\Gamma\vdash\theta$, whence, by Parts 1 and 3, we know that $\Gamma\vdash\theta_r$ for some $1\leq r \leq k$, which clearly contradicts the consistency of $(\Gamma, \Delta)$. The obtained contradiction shows that the bi-set $(\Gamma_0 \cup \{\phi\diamondto\psi\mid\psi\in \Gamma\}, \{\phi\boxto\psi\mid\psi\in \Delta\})$ must have been consistent.
\end{proof}
We observe, next, that we can use the usual Lindenbaum construction to extend every consistent bi-set to a maximal one:
\begin{lemma}\label{L:lindenbaum}
	Let $(\Gamma, \Delta)\in \mathcal{P}(\mathcal{L})\times\mathcal{P}(\mathcal{L})$ be consistent. Then there exists a maximal $(\Xi, \Theta)\in \mathcal{P}(\mathcal{L})\times\mathcal{P}(\mathcal{L})$ such that $\Gamma \subseteq \Xi$ and $\Delta \subseteq \Theta$. 
\end{lemma}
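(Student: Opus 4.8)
The plan is to run the standard Lindenbaum construction, using Lemma \ref{L:consistent}.1 as the engine that decides each formula one way or the other while preserving consistency. First I would fix an enumeration $\mathcal{L} = \{\phi_n \mid n \in \omega\}$ of the whole language; this is legitimate because $Var$ is countable and $\mathcal{L}$ is generated from $Var$ by finitely many connectives, so $\mathcal{L}$ is countable.

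Next I would build a $\subseteq$-increasing chain of consistent bi-sets $(\Gamma_n, \Delta_n)_{n \in \omega}$ by recursion. Set $(\Gamma_0, \Delta_0) := (\Gamma, \Delta)$, which is consistent by hypothesis. Given a consistent $(\Gamma_n, \Delta_n)$, apply Lemma \ref{L:consistent}.1 to $\phi_n$: at least one of $(\Gamma_n \cup \{\phi_n\}, \Delta_n)$ and $(\Gamma_n, \Delta_n \cup \{\phi_n\})$ is consistent, and I would take $(\Gamma_{n+1}, \Delta_{n+1})$ to be the first of these that is consistent. By construction every $(\Gamma_n, \Delta_n)$ is consistent, both components are nondecreasing in $n$, and $\phi_n \in \Gamma_{n+1} \cup \Delta_{n+1}$. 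Finally I would set $\Xi := \bigcup_{n \in \omega} \Gamma_n$ and $\Theta := \bigcup_{n \in \omega} \Delta_n$ and check that $(\Xi, \Theta)$ is the desired maximal extension. Extension holds since $(\Gamma, \Delta) = (\Gamma_0, \Delta_0) \subseteq (\Xi, \Theta)$, and completeness holds since every $\phi_n$ lies in $\Gamma_{n+1} \cup \Delta_{n+1} \subseteq \Xi \cup \Theta$, whence $\Xi \cup \Theta = \mathcal{L}$.

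The only step with genuine content beyond the bookkeeping — and hence the main obstacle — is the consistency of the limit $(\Xi, \Theta)$, which rests on the finitary character of the consistency notion and of derivations. Suppose, towards a contradiction, that $(\Xi, \Theta)$ were inconsistent. Then by definition there is some finite $\Delta' \Subset \Theta$ with $\Xi \vdash \bigvee \Delta'$. Since a derivation is a finite sequence, it invokes only finitely many premises from $\Xi$, and $\Delta'$ is itself finite; all of these finitely many formulas appear at some common finite stage, so there is an $N \in \omega$ such that the premises used lie in $\Gamma_N$ and $\Delta' \Subset \Delta_N$. Monotonicity of $\vdash$ in the premises then gives $\Gamma_N \vdash \bigvee \Delta'$, so $(\Gamma_N, \Delta_N)$ is inconsistent, contradicting the fact that every stage of the chain is consistent. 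Therefore $(\Xi, \Theta)$ is consistent, and being also complete it is maximal, which completes the argument.
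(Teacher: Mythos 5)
Your proposal is correct and is precisely the ``usual Lindenbaum construction'' that the paper invokes without spelling out: enumerate the countable language, decide each formula via Lemma \ref{L:consistent}.1 while preserving consistency, take unions, and use the finitary character of derivations (and of the finite disjunctions in the definition of consistency) to push any inconsistency of the limit down to a finite stage. The only discrepancy is cosmetic: the paper's statement of Lemma \ref{L:consistent}.1 contains a typo (both disjuncts read $(\Gamma \cup \{\phi\}, \Delta)$), and you have correctly used its evidently intended reading, namely that $(\Gamma \cup \{\phi\}, \Delta)$ or $(\Gamma, \Delta \cup \{\phi\})$ is consistent.
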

Next, we define the canonical model $\mathcal{M}_c$ for $\mathbb{ICK}$:
\begin{definition}\label{D:canonical-model}
	The structure $\mathcal{M}_c$ is the tuple $(W_c, \leq_c, R_c, V_c)$ such that:
	\begin{itemize}
		\item $W_c:=\{(\Gamma, \Delta)\in \mathcal{P}(\mathcal{L})\times\mathcal{P}(\mathcal{L})\mid (\Gamma, \Delta)\text{ is maximal}\}$.
		
		\item $(\Gamma_0,\Delta_0)\leq_c(\Gamma_1,\Delta_1)$ iff $\Gamma_0\subseteq\Gamma_1$ for all $(\Gamma_0,\Delta_0),(\Gamma_1,\Delta_1)\in W_c$.
		
		\item For all $(\Gamma_0,\Delta_0),(\Gamma_1,\Delta_1)\in W_c$ and $X \subseteq W_c$, we have $((\Gamma_0,\Delta_0),X,(\Gamma_1,\Delta_1)) \in R_c$ iff there exists a $\phi\in\mathcal{L}$, such that all of the following holds:
		\begin{itemize}
			\item $X = \{(\Gamma,\Delta)\in W_c\mid\phi\in\Gamma\}$.
			
			\item $\{\psi\mid\phi\boxto\psi\in \Gamma_0\}\subseteq \Gamma_1$.
			
			\item $\{\phi\diamondto\psi\mid\psi\in \Gamma_1\}\subseteq \Gamma_0$.
		\end{itemize}
		
		\item $V_c(p):=\{(\Gamma,\Delta)\in W_c\mid p\in\Gamma\}$ for every $p \in Var$.	
	\end{itemize}
\end{definition}
First of all, we observe that the definition of $R_c$ does not depend on the choice of the representative formula $\phi\in\mathcal{L}$. The following lemma provides the necessary stepping stone:
\begin{lemma}\label{L:representatives}
	Let $\phi,\psi \in \mathcal{L}$ be such that $\{(\Gamma,\Delta)\in W_c\mid\phi\in\Gamma\} = \{(\Gamma,\Delta)\in W_c\mid\psi\in\Gamma\}$. Then, for every $(\Gamma',\Delta')\in W_c$ and every $\chi\in\mathcal{L}$ we have:
	\begin{enumerate}
		\item $\phi\boxto\chi\in\Gamma'\Leftrightarrow\psi\boxto\chi\in\Gamma'$.
		
		\item $\phi\diamondto\chi\in\Gamma'\Leftrightarrow\psi\diamondto\chi\in\Gamma'$.
	\end{enumerate}
\end{lemma}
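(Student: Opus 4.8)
The plan is to reduce both claims to the single observation that the hypothesis forces $\phi$ and $\psi$ to be provably equivalent, i.e. $\vdash\phi\leftrightarrow\psi$, after which the congruence rules \eqref{E:RAbox} and \eqref{E:RAdiam}, together with the basic closure properties of maximal bi-sets recorded in Lemma \ref{L:maximal}, settle everything.

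First I would establish the key claim: if $\{(\Gamma,\Delta)\in W_c\mid\phi\in\Gamma\} = \{(\Gamma,\Delta)\in W_c\mid\psi\in\Gamma\}$, then $\vdash\phi\to\psi$ and $\vdash\psi\to\phi$. I argue one direction by contraposition. Suppose $\not\vdash\phi\to\psi$. By the Deduction Theorem (Lemma \ref{L:int}.1) this means $\phi\not\vdash\psi$, and hence the bi-set $(\{\phi\},\{\psi\})$ is consistent: the only finite subsets of $\{\psi\}$ are $\emptyset$ and $\{\psi\}$, and $\phi\not\vdash\psi$ rules out $\bigvee\{\psi\}=\psi$ while also ruling out $\bigvee\emptyset=\bot$, since $\phi\vdash\bot$ would give $\phi\vdash\psi$ by ex falso. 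Applying the Lindenbaum construction (Lemma \ref{L:lindenbaum}) I extend $(\{\phi\},\{\psi\})$ to a maximal bi-set $(\Xi,\Theta)$ with $\phi\in\Xi$ and $\psi\in\Theta$. Since $(\Xi,\Theta)$ is consistent and $\psi\in\Theta$, we have $\psi\notin\Xi$. Thus $(\Xi,\Theta)$ lies in the left-hand set but not in the right-hand one, contradicting the hypothesis. Swapping the roles of $\phi$ and $\psi$ yields $\vdash\psi\to\phi$ as well, so $\vdash\phi\leftrightarrow\psi$.

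With $\vdash\phi\leftrightarrow\psi$ in hand, I apply rule \eqref{E:RAbox} to obtain $\vdash(\phi\boxto\chi)\leftrightarrow(\psi\boxto\chi)$ and rule \eqref{E:RAdiam} to obtain $\vdash(\phi\diamondto\chi)\leftrightarrow(\psi\diamondto\chi)$ for the given $\chi$; this step is legitimate because $\phi\leftrightarrow\psi$ is provable, and for theorems (no premises) the inference rules of $\mathbb{ICK}$ are freely available. Now fix any $(\Gamma',\Delta')\in W_c$. Both implications of each biconditional are theorems, hence elements of $\Gamma'$ by Lemma \ref{L:maximal}.1; then Lemma \ref{L:maximal}.4 transfers membership of $\phi\boxto\chi$ in $\Gamma'$ to $\psi\boxto\chi$ and, using the converse implication, back again, which proves Part 1, and the identical argument with $\diamondto$ proves Part 2.

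The only non-routine step is the key claim of the second paragraph: converting the purely semantic coincidence of the two definable sets into the syntactic equivalence $\vdash\phi\leftrightarrow\psi$. Everything there hinges on the Lindenbaum lemma, which guarantees that any failure of provable equivalence is witnessed by an actual world of the canonical model separating $\phi$ from $\psi$; once this bridge is crossed, the congruence rules of $\mathbb{ICK}$ make both equivalences immediate.
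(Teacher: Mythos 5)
Your proposal is correct and follows essentially the same route as the paper's own proof: derive $\vdash\phi\leftrightarrow\psi$ by contraposition via the Lindenbaum lemma (a failure of provable equivalence yields a maximal bi-set separating the two definable sets), then apply \eqref{E:RAbox} and \eqref{E:RAdiam} and conclude by the closure properties of maximal bi-sets. The only difference is that you spell out details the paper leaves implicit, such as the verification that $(\{\phi\},\{\psi\})$ is consistent and the final appeal to Lemma \ref{L:maximal}.1 and \ref{L:maximal}.4.
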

\begin{proof}
	Assume the hypothesis of the Lemma. We will show that in this case we must have $\vdash\phi\leftrightarrow\psi$. Suppose not, and assume, wlog, that $\not\vdash\phi\to\psi$. Then $(\{\phi\},\{\psi\})$ must be consistent and thus extendable to some maximal $(\Gamma_0,\Delta_0)\supseteq (\{\phi\},\{\psi\})$. But then clearly
	$$
	(\Gamma_0,\Delta_0)\in \{(\Gamma,\Delta)\in W_c\mid\phi\in\Gamma\} \setminus \{(\Gamma,\Delta)\in W_c\mid\psi\in\Gamma\},
	$$ 
	in contradiction with our initial assumptions. The obtained contradiction shows that we must have $\vdash\phi\leftrightarrow\psi$. The application of \eqref{E:RAbox} and \eqref{E:RAdiam} then yields that also $\vdash(\phi\boxto\chi)\leftrightarrow\psi\boxto\chi$
	and $\vdash(\phi\diamondto\chi)\leftrightarrow\psi\diamondto\chi$
	for every $\chi\in\mathcal{L}$, whence our Lemma clearly follows.  
\end{proof}
We have to make sure that we have indeed just defined a model:
\begin{lemma}\label{L:canonical-model}
The structure $\mathcal{M}_c$, as given in Definition \ref{D:canonical-model}, is a model.	
\end{lemma}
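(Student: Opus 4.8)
The plan is to verify the clauses of Definition~\ref{D:chellas-model} one by one, with conditions \eqref{Cond:1} and \eqref{Cond:2} being the only substantial part. The easy clauses go first. Non-emptiness of $W_c$ follows from the consistency of the bi-set $(\emptyset,\emptyset)$ together with Lemma~\ref{L:lindenbaum}; and $(\emptyset,\emptyset)$ is consistent precisely because $\not\vdash\bot$, which holds since $\bot$ is refuted at every point of every model, hence not valid, so that $\not\vdash\bot$ by Lemma~\ref{L:soundness}. That $\leq_c$ is a pre-order is immediate from the reflexivity and transitivity of $\subseteq$, and the upward-persistence condition required of $V_c$ is immediate from its definition, since $(\Gamma_0,\Delta_0)\leq_c(\Gamma_1,\Delta_1)$ means exactly $\Gamma_0\subseteq\Gamma_1$. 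Finally, $R_c\subseteq W_c\times\mathcal{P}(W_c)\times W_c$ holds by construction.

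It remains to verify \eqref{Cond:1} and \eqref{Cond:2}. In both cases the inclusion has content only when $(R_c)_X$ is inhabited, and whenever $(R_c)_X$ relates a pair of worlds the definition of $R_c$ supplies a formula $\phi$ with $X=\{(\Gamma,\Delta)\in W_c\mid\phi\in\Gamma\}$. I fix such a $\phi$ once and for all and, in each case, construct the missing world together with an $R_c$-link witnessed by this same $\phi$, so that the relevant set $X$ is left unchanged (in particular the trivial case, where $X$ is of no such form and $(R_c)_X=\emptyset$, makes both inclusions hold vacuously).

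For \eqref{Cond:1}, suppose $(\Gamma_0,\Delta_0)\leq_c(\Gamma_0',\Delta_0')$ and $(R_c)_X((\Gamma_0,\Delta_0),(\Gamma_1,\Delta_1))$; I must find $(\Gamma_1',\Delta_1')$ with $(\Gamma_1,\Delta_1)\leq_c(\Gamma_1',\Delta_1')$ and $(R_c)_X((\Gamma_0',\Delta_0'),(\Gamma_1',\Delta_1'))$. The hypotheses unpack to $\Gamma_0\subseteq\Gamma_0'$ together with the two defining inclusions of $R_c$, which are exactly the premises of Lemma~\ref{L:maximal}.5 applied with its distinguished world $(\Gamma,\Delta):=(\Gamma_0',\Delta_0')$. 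That lemma yields the consistency of $(\Gamma_1\cup\{\psi\mid\phi\boxto\psi\in\Gamma_0'\},\{\psi\mid\phi\diamondto\psi\in\Delta_0'\})$, which Lemma~\ref{L:lindenbaum} extends to a maximal $(\Gamma_1',\Delta_1')$. Then $\Gamma_1\subseteq\Gamma_1'$ is the required $\leq_c$-step, $\{\psi\mid\phi\boxto\psi\in\Gamma_0'\}\subseteq\Gamma_1'$ is the box-clause of $R_c$, and the diamond-clause $\{\phi\diamondto\psi\mid\psi\in\Gamma_1'\}\subseteq\Gamma_0'$ is recovered from $\{\psi\mid\phi\diamondto\psi\in\Delta_0'\}\subseteq\Delta_1'$ by contraposition, using that both $(\Gamma_1',\Delta_1')$ and $(\Gamma_0',\Delta_0')$ are maximal, so that their two components partition $\mathcal{L}$. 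Condition \eqref{Cond:2} is entirely symmetric: given $(R_c)_X((\Gamma_0,\Delta_0),(\Gamma_1,\Delta_1))$ and $(\Gamma_1,\Delta_1)\leq_c(\Gamma_1',\Delta_1')$, I apply Lemma~\ref{L:maximal}.6 with distinguished world $(\Gamma,\Delta):=(\Gamma_1',\Delta_1')$, extend the resulting consistent bi-set to a maximal $(\Gamma_0',\Delta_0')$ by Lemma~\ref{L:lindenbaum}, and read off $\Gamma_0\subseteq\Gamma_0'$ and the diamond-clause directly, with the box-clause $\{\psi\mid\phi\boxto\psi\in\Gamma_0'\}\subseteq\Gamma_1'$ again recovered by contraposition on the $\Delta$-components.

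The main obstacle has in effect already been absorbed into Lemma~\ref{L:maximal}, so that what is left requires only careful bookkeeping. The two delicate points are, first, that the very same representative $\phi$ must witness the new $R_c$-link (otherwise the index $X$ could change, and the required inclusion would fail to speak about $(R_c)_X$); and second, that the Lindenbaum extension delivers only one of the two defining inclusions of $R_c$ outright, while the other must be extracted by a maximality/completeness argument from the $\Delta$-side of the consistent bi-set produced by Lemma~\ref{L:maximal}. Keeping track of which inclusion is which in each of the two symmetric cases is the only thing that needs attention.
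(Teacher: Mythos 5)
Your proposal is correct and follows essentially the same route as the paper's own proof: the same reduction of \eqref{Cond:1} and \eqref{Cond:2} to Lemma \ref{L:maximal}.5 and Lemma \ref{L:maximal}.6 plus Lemma \ref{L:lindenbaum}, with the same insistence on reusing the representative formula $\phi$ so that $X$ is unchanged, and the same contraposition/completeness argument recovering the second defining clause of $R_c$ from the $\Delta$-side of the extended bi-set. The only (harmless) deviation is that you justify $\not\vdash\bot$ via soundness (Lemma \ref{L:soundness}), which tacitly requires exhibiting at least one pointed model, whereas the paper uses the purely syntactic Lemma \ref{L:int}.2.
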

\begin{proof}
	We show, first, the $W_c \neq \emptyset$. Indeed, consider the bi-set $(\emptyset,\emptyset)$. It is well-known that $\bot\notin\mathsf{Int}$, and since $\bot\in\mathcal{L}_i$, Lemma \ref{L:int}.2 now implies that $\bot\notin\mathbb{ICK}$, whence, by Lemma \ref{L:int}.1, we know that $\not\vdash\bot$. But the latter means that $(\emptyset,\emptyset)$ is consistent. Therefore, by Lemma \ref{L:lindenbaum}, there must exist a maximal $(\Gamma, \Delta)\supseteq(\emptyset,\emptyset)$; and we will have, by Definition \ref{D:canonical-model}, that $(\Gamma, \Delta)\in W_c$.
	
	It is also clear from Definition  \ref{D:canonical-model} and Lemma \ref{L:representatives} that $\leq_c$ is a pre-order, and that $R_c\subseteq W_c\times\mathcal{P}(W_c)\times W_c$ is well-defined. So it only remains to check the satisfaction of conditions \eqref{Cond:1} and \eqref{Cond:2} from Definition \ref{D:chellas-model}.
	
	As for \eqref{Cond:1}, assume that $(\Gamma,\Delta)$,  $(\Gamma_0,\Delta_0)$, and  $(\Gamma_1,\Delta_1)$ are maximal, and that $X\subseteq W_c$ is such that we have $(\Gamma,\Delta) \mathrel{\geq_c}(\Gamma_0,\Delta_0)\mathrel{(R_c)_X}(\Gamma_1,\Delta_1)$. Then, in particular, $\Gamma\supseteq\Gamma_0$. Moreover, we can choose a $\phi \in\mathcal{L}$ such that all of the following holds:
	\begin{align}
		&X = \{(\Xi,\Theta)\in W_c\mid\phi\in\Xi\}\label{E:mod1}\\
		&\{\psi\mid\phi\boxto\psi\in \Gamma_0\}\subseteq \Gamma_1\label{E:mod2}\\
		&\{\phi\diamondto\psi\mid\psi\in \Gamma_1\}\subseteq \Gamma_0\label{E:mod3}
	\end{align}
By Lemma \ref{L:maximal}.5, the bi-set $(\Gamma_1 \cup \{\psi\mid\phi\boxto\psi\in \Gamma\}, \{\psi\mid\phi\diamondto\psi\in \Delta\})$ must then be consistent, so that, by Lemma \ref{L:lindenbaum}, this bi-set must then be extendable to some maximal bi-set $(\Gamma',\Delta')$. In particular, we will have $\Gamma'\supseteq \Gamma_1$ whence clearly $(\Gamma',\Delta')\mathrel{\geq_c}(\Gamma_1,\Delta_1)$.

Next, we get that $\{\psi\mid\phi\boxto\psi\in \Gamma\}\subseteq \Gamma'$ trivially by the choice of $(\Gamma',\Delta')$. Moreover, if $\psi \in \Gamma'$, then we must have $\psi\notin\Delta'$ by the consistency of $(\Gamma',\Delta')$. But this means, in particular, that we cannot have $\phi\diamondto\psi\in\Delta$, so that, by the completeness of $(\Gamma,\Delta)$ we must have $\phi\diamondto\psi\in \Gamma$. Thus we have shown that also $\{\phi\diamondto\psi\mid\psi\in \Gamma'\}\subseteq \Gamma$. Summing this up with \eqref{E:mod1}, we obtain that $(\Gamma,\Delta)\mathrel{(R_c)_X}(\Gamma',\Delta')$. Thus we get that $
(\Gamma,\Delta)\mathrel{(R_c)_X}(\Gamma',\Delta')\mathrel{\geq_c}(\Gamma_1,\Delta_1)$, and condition \eqref{Cond:1} is shown to be satisfied.

As for \eqref{Cond:2}, assume that $(\Gamma,\Delta)$,  $(\Gamma_0,\Delta_0)$, and  $(\Gamma_1,\Delta_1)$ are maximal, and that $X\subseteq W_c$ is such that we have $(\Gamma_0,\Delta_0)\mathrel{(R_c)_X} (\Gamma_1,\Delta_1)\mathrel{\leq_c}(\Gamma,\Delta)$. Then, in particular, $\Gamma\supseteq\Gamma_1$. Moreover, we can choose a $\phi \in\mathcal{L}$ such that all of \eqref{E:mod1}--\eqref{E:mod3} hold.

By Lemma \ref{L:maximal}.6, the bi-set $(\Gamma_0 \cup \{\phi\diamondto\psi\mid\psi\in \Gamma\}, \{\phi\boxto\psi\mid\psi\in \Delta\})$ must then be consistent, so that, by Lemma \ref{L:lindenbaum}, this bi-set must then be extendable to some maximal bi-set $(\Gamma',\Delta')$. In particular, we will have $\Gamma'\supseteq \Gamma_0$ whence clearly $(\Gamma',\Delta')\mathrel{_c\geq}(\Gamma_0,\Delta_0)$.

Next, assume that $\psi\in \mathcal{L}$ is such that $\phi\boxto\psi\in \Gamma'$. If $\psi\notin\Gamma$, then, by the completeness of $(\Gamma,\Delta)$, we must have $\psi \in\Delta$, whence it follows that $\phi\boxto\psi\in \Delta'$. But the latter contradicts the consistency of $(\Gamma',\Delta')$. Therefore, we must have $\psi\in\Gamma$. Since the choice of $\psi$ was arbitrary, we have shown that $\{\psi\mid\phi\boxto\psi\in \Gamma'\}\subseteq \Gamma$. Moreover, if $\psi\in\Gamma$, then clearly $\phi\diamondto\psi \in \Gamma'$ so that $\{\phi\diamondto\psi\mid\psi\in \Gamma\}\subseteq \Gamma'$ also holds. Summing this up with \eqref{E:mod1}, we obtain that $(\Gamma',\Delta')\mathrel{(R_c)_X}(\Gamma,\Delta)$. Thus we get that $(\Gamma_0,\Delta_0)\mathrel{\leq_c}(\Gamma',\Delta')\mathrel{(R_c)_X}(\Gamma,\Delta)$, and condition \eqref{Cond:2} is shown to be satisfied.
\end{proof}
The truth lemma for this model then looks as follows:
\begin{lemma}\label{L:truth}
	For every $\phi\in\mathcal{L}$ and for every $(\Gamma,\Delta)\in W_c$, we have $
\mathcal{M}_c,(\Gamma,\Delta)\models\phi \Leftrightarrow \phi \in \Gamma$.
\end{lemma}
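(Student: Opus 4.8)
The plan is to prove Lemma \ref{L:truth} by induction on the construction of $\phi$, establishing both directions of the biconditional at once. The base cases $\phi\in\{\top,\bot\}$ and $\phi = p$ are immediate from the definition of $V_c$, the consistency of $(\Gamma,\Delta)$ (for $\bot$), and $\vdash\top$ together with Lemma \ref{L:maximal}.1 (for $\top$); the cases $\phi = \psi\wedge\chi$ and $\phi = \psi\vee\chi$ reduce to the induction hypothesis by Lemma \ref{L:maximal}.2 and Lemma \ref{L:maximal}.3. For $\phi = \psi\to\chi$, the membership-to-satisfaction direction uses $\Gamma\subseteq\Gamma'$ for every $(\Gamma',\Delta')\geq_c(\Gamma,\Delta)$ together with Lemma \ref{L:maximal}.4; for the converse I argue contrapositively, extending the consistent bi-set $(\Gamma\cup\{\psi\},\{\chi\})$ supplied by Lemma \ref{L:consistent}.2 to a maximal $(\Gamma',\Delta')\geq_c(\Gamma,\Delta)$ via Lemma \ref{L:lindenbaum}, which by the induction hypothesis witnesses $\psi$ and refutes $\chi$ above $(\Gamma,\Delta)$.

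Before the modal cases I would record that, by the induction hypothesis, $\|\psi\|_{\mathcal{M}_c} = \{(\Gamma,\Delta)\in W_c\mid\psi\in\Gamma\}$, so $\psi$ itself may serve as the representative formula in the definition of $R_c$; by Lemma \ref{L:representatives} the accessibility conditions are in any case independent of the representative. Thus $(R_c)_{\|\psi\|}$ relates $(\Gamma_0,\Delta_0)$ to $(\Gamma_1,\Delta_1)$ exactly when $\{\theta\mid\psi\boxto\theta\in\Gamma_0\}\subseteq\Gamma_1$ and $\{\psi\diamondto\theta\mid\theta\in\Gamma_1\}\subseteq\Gamma_0$. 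The $\diamondto$-case is then smooth. For satisfaction-to-membership, a witness $u$ with $(R_c)_{\|\psi\|}((\Gamma,\Delta),u)$ and $\chi\in\Gamma_u$ (by IH) yields $\psi\diamondto\chi\in\Gamma$ directly from the diamond clause. For the converse I would feed $\psi\diamondto\chi\in\Gamma$ into Lemma \ref{L:consistent}.4 to obtain the consistent bi-set $(\{\chi\}\cup\{\theta\mid\psi\boxto\theta\in\Gamma\},\{\theta\mid\psi\diamondto\theta\in\Delta\})$ and extend it to a maximal $u$; here the right-hand component is exactly what forces $\{\psi\diamondto\theta\mid\theta\in\Gamma_u\}\subseteq\Gamma$ (by completeness of $(\Gamma,\Delta)$), so both clauses of $R_c$ hold and $u\models\chi$ by IH.

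The real work is the $\boxto$-case, and specifically its satisfaction-to-membership (contrapositive) direction. The easy half mirrors $\diamondto$: if $\psi\boxto\chi\in\Gamma$, then for any $v\geq_c(\Gamma,\Delta)$ and any $u$ with $(R_c)_{\|\psi\|}(v,u)$ we have $\psi\boxto\chi\in\Gamma_v$, whence $\chi\in\Gamma_u$ and $u\models\chi$ by IH. For the hard half I assume $\psi\boxto\chi\in\Delta$ and must produce $v\geq_c(\Gamma,\Delta)$ and $u$ with $(R_c)_{\|\psi\|}(v,u)$ but $\chi\notin\Gamma_u$. The natural first move is Lemma \ref{L:consistent}.3, which makes $(\{\theta\mid\psi\boxto\theta\in\Gamma\},\{\chi\})$ consistent; extending it gives a maximal $u$ with $\{\theta\mid\psi\boxto\theta\in\Gamma\}\subseteq\Gamma_u$ and $\chi\in\Delta_u$. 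The obstacle, and the crux of the whole lemma, is that one cannot in general take $v = (\Gamma,\Delta)$: the diamond clause would then demand $\{\psi\diamondto\theta\mid\theta\in\Gamma_u\}\subseteq\Gamma$, and this can fail, since $\diamondto$-formulas are preserved only upward along $\leq_c$ (Lemma \ref{L:chellas-monotonicity}), not downward. Concretely, $\psi\boxto(\chi\vee\theta)\in\Gamma$ forces $\theta\in\Gamma_u$ while $\psi\diamondto\theta$ may still lie in $\Delta$, so the putative relation out of $(\Gamma,\Delta)$ breaks.

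The resolution is to build a strictly larger intermediate world. Having fixed $u$, I would pass to the bi-set $(\Gamma\cup\{\psi\diamondto\theta\mid\theta\in\Gamma_u\},\{\psi\boxto\eta\mid\eta\in\Delta_u\})$ and extract a maximal $v$ from it; by construction $v\geq_c(\Gamma,\Delta)$, the diamond clause $\{\psi\diamondto\theta\mid\theta\in\Gamma_u\}\subseteq\Gamma_v$ holds, and the box clause $\{\theta\mid\psi\boxto\theta\in\Gamma_v\}\subseteq\Gamma_u$ holds because every $\psi\boxto\eta$ with $\eta\in\Delta_u$ has been driven into $\Delta_v$. Everything thus reduces to the consistency of this bi-set, which I expect to be the main technical obstacle and which I would prove by the method already used for Lemma \ref{L:maximal}.6: assuming inconsistency gives (via Lemma \ref{L:alt-consistency}) a derivation $\Gamma\vdash(\psi\diamondto\theta_1)\to\cdots\to(\psi\diamondto\theta_b)\to(\psi\boxto H)$ with each $\theta_j\in\Gamma_u$ and $H = \bigvee_r\eta_r\in\Delta_u$ (after collapsing the right-hand disjunction by \eqref{E:Rmbox}), and then peeling the diamond antecedents one at a time through \eqref{E:a4} converts this into $\Gamma\vdash\psi\boxto(\theta_1\to\cdots\to\theta_b\to H)$. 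By \eqref{E:a1} and Lemma \ref{L:maximal}.1 this box-formula lies in $\Gamma$, so $\theta_1\to\cdots\to\theta_b\to H\in\Gamma_u$; since $\theta_1,\dots,\theta_b\in\Gamma_u$, repeated use of Lemma \ref{L:maximal}.4 gives $H\in\Gamma_u$, contradicting $H\in\Delta_u$. With the bi-set shown consistent, Lemma \ref{L:lindenbaum} delivers $v$, and the pair $(v,u)$ refutes $\psi\boxto\chi$ at $(\Gamma,\Delta)$, completing the induction.
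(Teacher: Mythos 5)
Your proposal is correct and takes essentially the same route as the paper's own proof: the same preliminary use of Lemma \ref{L:representatives} together with the IH to let $\psi$ itself act as the representative for $(R_c)_{\|\psi\|_{\mathcal{M}_c}}$, the same treatment of the easy cases via Lemma \ref{L:consistent} and Lemma \ref{L:lindenbaum}, and, for the hard direction of $\boxto$, the same two-stage construction in which $u$ comes from Lemma \ref{L:consistent}.3 and the intermediate world $v$ from the consistency of $(\Gamma\cup\{\psi\diamondto\theta\mid\theta\in\Gamma_u\},\{\psi\boxto\eta\mid\eta\in\Delta_u\})$, which is exactly the paper's bi-set $(\Gamma_0,\Delta_0)$. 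The only deviation is cosmetic: where the paper collapses the diamond premises into a single $\psi\diamondto\tau$ via \eqref{E:Rmdiam} and applies \eqref{E:a4} once, you curry and peel them one at a time through \eqref{E:a4}; both derivations go through.
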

\begin{proof}
	We proceed by induction on the construction of $\phi$.
	
\textit{Basis}. If $\phi = p \in Var$, then the lemma holds by the definition of $\mathcal{M}_c$. If $\phi \in \{\top, \bot\}$, then we reason as in the case of $\mathsf{Int}$.

\textit{Induction step}. The cases associated with $\wedge$, $\vee$, and $\to$ are solved as in the case of $\mathsf{Int}$. We treat the two remaining cases:

\textit{Case 1}. $\phi = \psi \boxto \chi$.

($\Leftarrow$)  Let $(\Gamma,\Delta)\in W_c$ be such that $\phi = \psi \boxto \chi \in \Gamma$, and let $(\Gamma_0,\Delta_0), (\Gamma_1,\Delta_1)\in W_c$ be such that $(\Gamma,\Delta)\mathrel{\leq_c}(\Gamma_0,\Delta_0)\mathrel{(R_c)_{\|\psi\|_{\mathcal{M}_c}}}(\Gamma_1,\Delta_1)$. Then we must have, on the one hand, that $\Gamma \subseteq \Gamma_0$, so that, in particular, $\psi \boxto \chi \in \Gamma_0$. On the other hand, there must exist a $\theta\in \mathcal{L}$ such that all of the following holds:
\begin{align}
	&\|\psi\|_{\mathcal{M}_c} = \{(\Xi,\Theta)\in W_c\mid\theta\in\Xi\}\label{E:choice1}\\
	&\{\xi\mid\theta\boxto\xi\in \Gamma_0\}\subseteq \Gamma_1\label{E:choice2}\\
	&\{\theta\diamondto\xi\mid\xi\in \Gamma_1\}\subseteq \Gamma_0\label{E:choice3}
\end{align}
By IH, we know that also $\|\psi\|_{\mathcal{M}_c} = \{(\Xi,\Theta)\in W_c\mid\psi\in\Xi\}$. We thus get that:
\begin{equation}\label{E:choice4}
	\{(\Xi,\Theta)\in W_c\mid\psi\in\Xi\} = \{(\Xi,\Theta)\in W_c\mid\theta\in\Xi\}
\end{equation}
Since we have shown that $\psi \boxto \chi \in \Gamma_0$, we know that, by Lemma \ref{L:representatives} and \eqref{E:choice4}, we must also have $\theta\boxto\chi \in \Gamma_0$. It follows now, by \eqref{E:choice2}, that we must have $\chi\in \Gamma_1$. Next, IH implies that  $\mathcal{M}_c,(\Gamma_1,\Delta_1)\models\chi$. Since the choice of  $(\Gamma_0,\Delta_0), (\Gamma_1,\Delta_1)\in W_c$ under the condition that $(\Gamma,\Delta)\mathrel{\leq_c}(\Gamma_0,\Delta_0)\mathrel{(R_c)_{\|\psi\|_{\mathcal{M}_c}}}(\Gamma_1,\Delta_1)$ was made arbitrarily, it follows that we must have $\mathcal{M}_c,(\Gamma,\Delta)\models \psi \boxto \chi = \phi$.

($\Rightarrow$)  Let $(\Gamma,\Delta)\in W_c$ be such that $\phi = \psi \boxto \chi \notin \Gamma$. By completeness of $(\Gamma, \Delta)$, we must then have $\psi \boxto \chi \in \Delta$. Therefore, by Lemma \ref{L:consistent}.4, the bi-set $(\{\theta\mid\psi\boxto\theta\in\Gamma\},\{\chi\})$ must be consistent. By Lemma \ref{L:lindenbaum}, we can extend the latter bi-set to a maximal $(\Gamma',\Delta')\supseteq (\{\theta\mid\psi\boxto\theta\in\Gamma\},\{\chi\})$. Now, consider the bi-set $(\Gamma_0,\Delta_0):= (\Gamma\cup\{\psi\diamondto\theta\mid\theta\in \Gamma'\}, \{\psi\boxto\xi\mid\xi\in\Delta'\})$. We claim that $(\Gamma_0,\Delta_0)$ is consistent. Indeed, otherwise we can choose some $\gamma_1,\ldots,\gamma_n\in\Gamma$, $\tau_1,\ldots,\tau_m\in\Gamma'$ and $\xi_1,\ldots,\xi_k\in\Delta'$ such that $\bigwedge^n_{i = 1}\gamma_i,\bigwedge^m_{j = 1}(\psi\diamondto\tau_j)\vdash\bigvee^k_{r = 1}(\psi\boxto\xi_r)$. But then, for $\gamma:= \bigwedge^n_{i = 1}\gamma_i$, $\tau:= \bigwedge^m_{j = 1}\tau_j$, and $\xi:= \bigvee^k_{r = 1}\xi_r$, we have:
\begin{align}
	&\gamma\vdash \bigwedge^m_{j = 1}(\psi\diamondto\tau_j)\to\bigvee^k_{r = 1}(\psi\boxto\xi_r)\label{E:can5} &&\text{Lemma \ref{L:int}.1}\\
	&\vdash(\psi\diamondto\tau)\to\bigwedge^m_{j = 1}(\psi\diamondto\tau_j)\label{E:can7} &&\text{\eqref{E:a0}, \eqref{E:mp}, \eqref{E:Rmdiam}}\\
	&\vdash\bigvee^k_{r = 1}(\psi\boxto\xi_r)\to(\psi\boxto\xi)\label{E:can9} &&\text{\eqref{E:a0}, \eqref{E:mp}, \eqref{E:Rmbox}}\\
	&\gamma\vdash(\psi\diamondto\tau)\to(\psi\boxto\xi)\label{E:can10} &&\text{\eqref{E:can5}, \eqref{E:can7}, \eqref{E:can9}}\\
	&(\psi\boxto(\tau\to\xi))\in \Gamma\label{E:can12} &&\text{\eqref{E:can10}, \eqref{E:a4}, Lemma \ref{L:maximal}.1}\\
	&(\tau\to\xi)\in \Gamma'\label{E:can13} &&\text{\eqref{E:can12}, choice of $(\Gamma',\Delta')$}
\end{align}
By \eqref{E:can13}, $(\Gamma',\Delta')$ must be inconsistent, which contradicts its choice and shows that $(\Gamma_0,\Delta_0)$ must have been consistent. Therefore, $(\Gamma_0,\Delta_0)$ is extendable to a maximal bi-set $(\Gamma_1,\Delta_1)\supseteq(\Gamma_0,\Delta_0)$. 

We now claim that we have both $(\Gamma,\Delta)\mathrel{\leq_c}(\Gamma_1,\Delta_1)$ and $((\Gamma_1,\Delta_1),\{(\Xi,\Theta)\in W_c\mid\psi\in\Xi\},(\Gamma',\Delta'))\in R_c$. Indeed, the first part follows from the fact that $\Gamma_1\supseteq\Gamma_0 \supseteq\Gamma$ which is trivial by the choice of $(\Gamma_1,\Delta_1)$ and $(\Gamma_0,\Delta_0)$. As for the second part, note that (a) for every $\theta\in\mathcal{L}$, if $\psi\boxto\theta\in \Gamma_1$ and $\theta \notin \Gamma'$, then, by the completeness of $(\Gamma',\Delta')$, we must have $\theta \in \Delta'$. But then $\psi\boxto\theta\in\Delta_0\subseteq\Delta_1$, which contradicts the consistency  of $(\Gamma_1,\Delta_1)$. The obtained contradiction shows that  $\{\psi\mid\psi\boxto\psi\in \Gamma_1\}\subseteq \Gamma'$. Next, (b) we trivially get that $\{\psi\diamondto\theta\mid\theta\in \Gamma'\}\subseteq\Gamma_0 \subseteq \Gamma_1$. Summing up (a) and (b), we get that $((\Gamma_1,\Delta_1),\{(\Xi,\Theta)\in W_c\mid\psi\in\Xi\},(\Gamma',\Delta'))\in R_c$.

It remains to notice that, by IH, we must have $\|\psi\|_{\mathcal{M}_c} = \{(\Xi,\Theta)\in W_c\mid\psi\in\Xi\}$, so that we have shown, in effect that $((\Gamma_1,\Delta_1),\|\psi\|_{\mathcal{M}_c},(\Gamma',\Delta'))\in R_c$.

Observe, next, that we must also have $\chi\in \Delta'$, so that, by the consistency of $(\Gamma',\Delta')$ we must have $\chi\notin\Gamma'$, whence, by IH, $\mathcal{M}_c,(\Gamma',\Delta')\not\models \chi$. Together with the fact that $(\Gamma,\Delta)\mathrel{\leq_c}(\Gamma_1,\Delta_1)$ and $((\Gamma_1,\Delta_1),\|\psi\|_{\mathcal{M}_c},(\Gamma',\Delta'))\in R_c$, this finaly implies that $\mathcal{M}_c,(\Gamma',\Delta')\not\models \psi\boxto\chi = \phi$.

\textit{Case 2}. $\phi = \psi \diamondto \chi$.

($\Leftarrow$)  Let $(\Gamma,\Delta)\in W_c$ be such that $\phi = \psi \diamondto \chi \in \Gamma$. Then, by Lemma \ref{L:consistent}.5, the bi-set $(\{\chi\}\cup\{\xi\mid\psi\boxto\xi \in \Gamma\},\{\theta\mid\psi\diamondto\theta\in\Delta\})$ must be consistent, and, by Lemma \ref{L:lindenbaum}, there must be a maximal bi-set $(\Gamma',\Delta')\in W_c$ such that $
(\Gamma',\Delta')\supseteq (\{\chi\}\cup\{\xi\mid\psi\boxto\xi \in \Gamma\},\{\theta\mid\psi\diamondto\theta\in\Delta\})$.

The latter means that $\chi\in\Gamma'$, so that, by IH, we must have $\mathcal{M}_c,(\Gamma',\Delta')\models \chi$. On the other hand, IH yields that  $\|\psi\|_{\mathcal{M}_c} = \{(\Xi,\Theta)\in W_c\mid\psi\in\Xi\}$. Next, by the choice of  $(\Gamma',\Delta')$, we know that $\{\xi\mid\phi\boxto\xi \in \Gamma\}\subseteq \Gamma'$. Finally, if $\theta\in\Gamma'$, then, by the consistency of $(\Gamma',\Delta')$, $\theta\notin\Delta'$, whence clearly $\psi\diamondto\theta\notin\Delta$. But then, by the completeness of  $(\Gamma,\Delta)$, we must have $\psi\diamondto\theta\in\Gamma$. We have thus shown that $\{\psi\diamondto\theta\mid\theta\in\Gamma'\}\subseteq\Gamma$. Summing up, we must have $((\Gamma,\Delta),\|\psi\|_{\mathcal{M}_c},(\Gamma',\Delta'))\in R_c$, and, since we have also shown that $\mathcal{M}_c,(\Gamma',\Delta')\models \chi$, $\mathcal{M}_c,(\Gamma,\Delta)\models \psi\diamondto\chi$ clearly follows.

($\Rightarrow$) Let $(\Gamma,\Delta)\in W_c$ be such that $\phi = \psi \diamondto \chi \notin \Gamma$. Assume now that  $(\Gamma',\Delta')\in W_c$ is such that $((\Gamma,\Delta),\|\psi\|_{\mathcal{M}_c},(\Gamma',\Delta'))\in R_c$. Let $\theta\in\mathcal{L}$ be such that all of the following holds:
\begin{align}
	&\|\psi\|_{\mathcal{M}_c} = \{(\Xi,\Theta)\in W_c\mid\theta\in\Xi\}\label{E:choice5}\\
	&\{\xi\mid\theta\boxto\xi\in \Gamma\}\subseteq \Gamma'\label{E:choice6}\\
	&\{\theta\diamondto\xi\mid\xi\in \Gamma'\}\subseteq \Gamma\label{E:choice7}
\end{align}
By IH, we know that also $\|\psi\|_{\mathcal{M}_c} = \{(\Xi,\Theta)\in W_c\mid\psi\in\Xi\}$. We thus get that:
\begin{equation}\label{E:choice8}
	\{(\Xi,\Theta)\in W_c\mid\psi\in\Xi\} = \{(\Xi,\Theta)\in W_c\mid\theta\in\Xi\}
\end{equation}
Since we have $\psi \diamondto \chi \notin \Gamma$, we know that, by Lemma \ref{L:representatives} and \eqref{E:choice8}, we must also have $\theta\diamondto\chi \notin \Gamma$. It follows now, by \eqref{E:choice7}, that we must have $\chi\notin \Gamma'$. Next, IH implies that  $\mathcal{M}_c,(\Gamma',\Delta')\not\models\chi$. Since the choice of $(\Gamma',\Delta')\in W_c$ under the condition that $((\Gamma,\Delta),\|\psi\|_{\mathcal{M}_c},(\Gamma',\Delta'))\in R_c$
 was made arbitrarily, it follows that we must have$\mathcal{M}_c,(\Gamma,\Delta)\not\models \psi \diamondto \chi = \phi$.
\end{proof}
The truth lemma allows us to deduce the (strong) soundness and completeness of $\mathbb{ICK}$ relative to $\mathsf{IntCK}$ in the usual way:
\begin{theorem}\label{T:completeness}
	For every $(\Gamma,\Delta)\in \mathcal{P}(\mathcal{L})\times\mathcal{P}(\mathcal{L})$, $(\Gamma,\Delta)$ is consistent iff $(\Gamma,\Delta)$ is satisfiable. In particular, for every $\phi\in\mathcal{L}$, $\vdash\phi\Leftrightarrow(\phi\in\mathsf{IntCK})$.
\end{theorem}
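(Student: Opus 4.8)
The plan is to assemble the theorem from the apparatus already established: soundness (Lemma~\ref{L:soundness}), the Lindenbaum lemma (Lemma~\ref{L:lindenbaum}), and the truth lemma (Lemma~\ref{L:truth}) for the canonical model $\mathcal{M}_c$. I would prove the biconditional ``consistent iff satisfiable'' as two separate implications and then read off the final clause $\vdash\phi\Leftrightarrow(\phi\in\mathsf{IntCK})$ by specializing to $(\emptyset,\{\phi\})$. Since the genuinely laborious part (the construction of $\mathcal{M}_c$ and the verification of the truth lemma) is already behind us, I do not expect a serious obstacle here; the theorem is essentially a clean assembly, and the only places demanding care are a couple of edge cases noted below.

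For the direction from satisfiability to consistency I would argue contrapositively, showing that an inconsistent bi-set is unsatisfiable. Suppose $(\Gamma,\Delta)$ is inconsistent. By Lemma~\ref{L:alt-consistency} there are $\phi_1,\ldots,\phi_n\in\Gamma$ and $\psi_1,\ldots,\psi_m\in\Delta$ with $\vdash\bigwedge^n_{i=1}\phi_i\to\bigvee^m_{j=1}\psi_j$, and by Lemma~\ref{L:soundness} this formula is valid. If some pointed model $(\mathcal{M},w)$ satisfied $(\Gamma,\Delta)$, then $\mathcal{M},w\models\phi_i$ for all $i$ and $\mathcal{M},w\not\models\psi_j$ for all $j$; instantiating the validity at the reflexive point $w\geq w$ and unwinding the clauses for $\wedge$, $\to$ and $\vee$ would force $\mathcal{M},w\models\psi_j$ for some $j$, a contradiction. (The degenerate cases $n=0$ and $m=0$, where the premise is $\top$ or the conclusion is $\bot$, are absorbed by the same computation.) Hence $(\Gamma,\Delta)$ is unsatisfiable.

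For the converse I would use the canonical model directly. Given a consistent $(\Gamma,\Delta)$, Lemma~\ref{L:lindenbaum} extends it to a maximal $(\Xi,\Theta)$ with $\Gamma\subseteq\Xi$ and $\Delta\subseteq\Theta$, so $(\Xi,\Theta)\in W_c$. I would first observe that maximality forces $\Xi\cap\Theta=\emptyset$: a common element $\chi$ would give $\Xi\vdash\chi=\bigvee\{\chi\}$ with $\{\chi\}\Subset\Theta$, contradicting consistency. The truth lemma then yields $\mathcal{M}_c,(\Xi,\Theta)\models\chi\Leftrightarrow\chi\in\Xi$ for every $\chi\in\mathcal{L}$. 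Consequently every $\phi\in\Gamma\subseteq\Xi$ is satisfied at $(\Xi,\Theta)$, while every $\psi\in\Delta\subseteq\Theta$ lies outside $\Xi$ and is therefore refuted there; thus $\mathcal{M}_c,(\Xi,\Theta)\models(\Gamma,\Delta)$, and $(\Gamma,\Delta)$ is satisfiable.

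Finally, for the ``in particular'' clause I would specialize the biconditional to the bi-set $(\emptyset,\{\phi\})$. By definition $\phi\in\mathsf{IntCK}$ means $\models\phi$, i.e.\ that $(\emptyset,\{\phi\})$ is unsatisfiable, which by the equivalence just proved is the same as $(\emptyset,\{\phi\})$ being inconsistent. The only finite subsets of $\{\phi\}$ are $\emptyset$ and $\{\phi\}$, with $\bigvee\emptyset=\bot$ and $\bigvee\{\phi\}=\phi$, so inconsistency of $(\emptyset,\{\phi\})$ amounts to $\vdash\bot$ or $\vdash\phi$. Since $\vdash\bot\to\phi$ holds intuitionistically, the first disjunct collapses into the second via \eqref{E:mp}, so the condition reduces exactly to $\vdash\phi$ (one may also invoke $\not\vdash\bot$, recorded in the proof of Lemma~\ref{L:canonical-model}). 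Chaining these equivalences gives $\phi\in\mathsf{IntCK}\Leftrightarrow\ \vdash\phi$, completing the proof.
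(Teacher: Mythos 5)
Your proposal is correct and takes essentially the same route as the paper's own proof: the satisfiable-implies-consistent direction rests on soundness (Lemma~\ref{L:soundness}), and the converse is the standard assembly of the Lindenbaum lemma (Lemma~\ref{L:lindenbaum}) with the truth lemma (Lemma~\ref{L:truth}) for the canonical model, followed by the same specialization to $(\emptyset,\{\phi\})$. The only divergences are bookkeeping: the paper proves the soundness half via an inductive claim about derivations rather than via Lemma~\ref{L:alt-consistency}, and its converse direction compresses the Lindenbaum extension step that you (correctly) spell out.
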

\begin{proof}
	($\Leftarrow$) We argue by contraposition. First, note that we can show the following claim by induction on the length of a derivation $\psi_1,\ldots,\psi_n = \phi$ of $\phi$ from the premises in $\Gamma$:
	
	\textit{Claim}. If $\Gamma\vdash\phi$, then $(\Gamma, \{\phi\})$ is unsatisfiable.
	
	If now $(\Gamma,\Delta)\in \mathcal{P}(\mathcal{L})\times\mathcal{P}(\mathcal{L})$ is inconsistent, then we must have $\Gamma\vdash\psi_1\vee\ldots\vee\psi_n$ for some $\psi_1,\ldots,\psi_n\in \Delta$. But then the Claim implies that $(\Gamma, \{\psi_1\vee\ldots\vee\psi_n\})$ is unsatisfiable whence clearly $(\Gamma, \{\psi_1,\ldots,\psi_n\})$ is unsatisfiable, so that $(\Gamma,\Delta)\supseteq(\Gamma, \{\psi_1,\ldots,\psi_n\})$ is unsatisfiable as well.
	
	($\Rightarrow$) $(\Gamma,\Delta)\in \mathcal{P}(\mathcal{L})\times\mathcal{P}(\mathcal{L})$ is consistent, then, by Lemma \ref{L:truth}, we have $\mathcal{M}_c,(\Gamma,\Delta)\models(\Gamma,\Delta)$.
	
	We have thus shown Theorem \ref{T:completeness}. In particular we have shown that, for every $\phi \in \mathcal{L}$, $\vdash\phi$ iff $(\emptyset,\{\phi\})$ is inconsistent iff $\phi\in\mathsf{IntCK}$.
\end{proof}
As a usual corollary, we obtain the compactness of $\mathsf{IntCK}$ for bi-sets:
\begin{corollary}\label{C:compactness}
	$(\Gamma,\Delta)\in \mathcal{P}(\mathcal{L})\times\mathcal{P}(\mathcal{L})$, $(\Gamma,\Delta)$ is satisfiable iff, for every $(\Gamma',\Delta')\Subset(\Gamma,\Delta)$, $(\Gamma',\Delta')$ is satisfiable.
\end{corollary}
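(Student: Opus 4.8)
The plan is to deduce compactness directly from the strong completeness established in Theorem \ref{T:completeness}, exploiting the fact that (in)consistency is a finitary notion. The key observation is that, by Theorem \ref{T:completeness}, satisfiability and consistency coincide for arbitrary bi-sets; hence it suffices to establish the purely proof-theoretic statement that $(\Gamma,\Delta)$ is consistent iff every $(\Gamma',\Delta')\Subset(\Gamma,\Delta)$ is consistent, and then to transport this equivalence back to satisfiability by applying Theorem \ref{T:completeness} both to $(\Gamma,\Delta)$ and to each of its finite sub-bi-sets.

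For the finitary reduction I would argue as follows. The left-to-right direction (if $(\Gamma,\Delta)$ is consistent, then so is every finite sub-bi-set) follows from the monotonicity of inconsistency: were some $(\Gamma',\Delta')\subseteq(\Gamma,\Delta)$ inconsistent, then by Lemma \ref{L:alt-consistency} there would be $\phi_1,\dots,\phi_n\in\Gamma'\subseteq\Gamma$ and $\psi_1,\dots,\psi_m\in\Delta'\subseteq\Delta$ with $\bigwedge^n_{i=1}\phi_i\vdash\bigvee^m_{j=1}\psi_j$, which already witnesses the inconsistency of $(\Gamma,\Delta)$. For the converse I would argue contrapositively: if $(\Gamma,\Delta)$ is inconsistent, then Lemma \ref{L:alt-consistency} hands us finitely many $\phi_1,\dots,\phi_n\in\Gamma$ and $\psi_1,\dots,\psi_m\in\Delta$ with $\bigwedge^n_{i=1}\phi_i\vdash\bigvee^m_{j=1}\psi_j$; the finite bi-set $(\{\phi_1,\dots,\phi_n\},\{\psi_1,\dots,\psi_m\})$ is then a sub-bi-set of $(\Gamma,\Delta)$ that is itself inconsistent, so not every finite sub-bi-set can be consistent.

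Combining these, I obtain the chain: $(\Gamma,\Delta)$ is satisfiable iff it is consistent (Theorem \ref{T:completeness}), iff every finite sub-bi-set is consistent (the reduction just sketched), iff every finite sub-bi-set is satisfiable (Theorem \ref{T:completeness} applied to each finite sub-bi-set). This yields exactly the statement of the corollary. I do not expect any genuine obstacle here: the entire content is packaged into the finitary characterization of inconsistency supplied by Lemma \ref{L:alt-consistency} together with the equivalence of Theorem \ref{T:completeness}. The only point requiring the slightest care is to ensure that the finitely many formulas witnessing an inconsistency of $(\Gamma,\Delta)$ actually lie inside the finite sub-bi-set one selects, and this is precisely what Lemma \ref{L:alt-consistency} guarantees.
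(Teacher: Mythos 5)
Your proposal is correct and follows essentially the same route as the paper: both directions are reduced via Theorem \ref{T:completeness} to the finitary character of inconsistency. The only cosmetic difference is that you obtain the finite witnessing sub-bi-set by invoking Lemma \ref{L:alt-consistency}, whereas the paper extracts the finitely many premises directly from an explicit derivation of $\psi_1\vee\ldots\vee\psi_n$ from $\Gamma$; these amount to the same thing, since Lemma \ref{L:alt-consistency} is itself just the packaged form of that extraction.
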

\begin{proof}
	The ($\Rightarrow$)-part is straightforward, as for the converse, we argue by contraposition. If $(\Gamma,\Delta)$ is unsatisfiable, then, by Theorem \ref{T:completeness}, $(\Gamma,\Delta)$ is inconsistent, therefore, for some $\psi_1,\ldots,\psi_n\in \Delta$ we have $\Gamma\vdash\psi_1\vee\ldots\vee\psi_n$. Let $\chi_1,\ldots,\chi_m$ be any derivation of $\psi_1\vee\ldots\vee\psi_n$ from the premises in $\Gamma$ and let $\phi_1,\ldots,\phi_k$ be a list of all formulas from $\Gamma$ occurring among $\chi_1,\ldots,\chi_m$. Then $\chi_1,\ldots,\chi_m$ also shows the inconsistency (and hence, by Theorem \ref{T:completeness}, the unsatisfiability) of $(\{\phi_1,\ldots,\phi_k\},\{\psi_1,\ldots,\psi_n\})\Subset(\Gamma,\Delta)$. 
\end{proof}

\section{Relations with other logics}\label{S:other}
In the existing literature, one can find several systems which can be viewed as natural companions to $\mathsf{IntCK}$. On the one hand, there are different intensional propositional logics, which either treat conditionals from a viewpoint similar to that of $\mathsf{IntCK}$, or extend $\mathsf{Int}$ with similar additional connectives, or both. On the other hand there is $\mathsf{FOIL}$, the first-order version of $\mathsf{Int}$, which is naturally viewed as a super-system for $\mathsf{IntCK}$, in that $\mathsf{IntCK}$ can be seen as isolating a special subclass of intuitionistic first-order reasoning which is relevant to handling conditionals. In both cases, one can expect that  $\mathsf{IntCK}$ will display some sort of natural relation to each of these logics. This section is devoted to looking into some examples of such relations.
\subsection{Intensional propositional logics}\label{sub:intense}
Due to the great number of systems in this class that can be related to $\mathsf{IntCK}$, we only confine ourselves to mentioning a few prominent examples; and, considering the length of this paper, most of our claims will only be supplied with a rather sketchy proof. We will mostly consider logical systems given by complete Hilbert-style axiomatizations. If $\mathsf{S}$ is such a system and $A_1,\ldots,A_n$ is a finite sequence of axiomatic schemes, we will denote by $\mathsf{S}+\{A_1,\ldots,A_n\}$ the system obtained from $\mathsf{S}$ by adding every instance of $A_1,\ldots,A_n$ and then closing under the applications of the rules of inference assumed in $\mathsf{S}$. In case $n = 1$, we will omit the figure brackets.

The first of the systems that we would like to consider is the basic system $\mathsf{CK}$ of classical conditional logic, introduced in \cite{chellas} and defined over $\mathcal{L}$. One variant of a complete axiomatization for $\mathsf{CK}$ is given by extending of \eqref{E:a0}, \eqref{E:a1}, \eqref{E:a5}, \eqref{E:mp}, \eqref{E:RAbox}, and \eqref{E:RCbox} with the following axiomatic schemes:
\begin{align}
	&\phi\vee\neg\phi\label{E:ax0}\tag{Ax0}\\
	&(\phi\diamondto\psi)\leftrightarrow\neg(\phi\boxto\neg\psi)\label{E:ax1}\tag{Ax1}
\end{align}
The addition of \eqref{E:ax0} to \eqref{E:a0} and \eqref{E:mp} transforms the purely propositional base of the system from $\mathsf{Int}$ to the classical propositional logic $\mathsf{CL}$. It is natural to expect that a similar relation holds between $\mathsf{IntCK}$ and $\mathsf{CK}$ in that the former is the a subsystem of the latter and that $\mathsf{IntCK}$, in its turn, can be transformed into $\mathsf{CK}$ by the addition of \eqref{E:ax0}, thus giving us the intuitionistic counterpart of $\mathsf{CK}$. This is indeed the case, as we will show presently. We prepare the result with a technical lemma:
\begin{lemma}\label{L:CK}
	The following statements are true:
	\begin{enumerate}
		\item Every instance of \eqref{E:a2}--\eqref{E:a4}, \eqref{E:a6}, \eqref{E:RAdiam}, and \eqref{E:RCdiam} as well as all the theorems and derived rules given in Lemma \ref{L:theorems}, are deducible in $\mathsf{CK}$.
		
		\item Every instance of \eqref{E:ax1} is deducible in $ (\mathsf{IntCK} + \eqref{E:ax0})$.
	\end{enumerate}  
\end{lemma}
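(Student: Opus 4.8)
The plan is to treat the two parts by quite different means. For Part~1, I would first observe that $\mathsf{CK}$ contains \eqref{E:a0}, \eqref{E:a1}, \eqref{E:a5}, \eqref{E:mp}, \eqref{E:RAbox}, and \eqref{E:RCbox} together with full classical propositional logic (obtained by adding \eqref{E:ax0}), and that the box-only items \eqref{E:Rnec}, \eqref{E:Rmbox}, and \eqref{E:T1} of Lemma~\ref{L:theorems} are deducible from this box-fragment by the very same derivations used in Appendix~\ref{A:1}, since those derivations appeal only to \eqref{E:a1}, \eqref{E:a5}, \eqref{E:RAbox}, and \eqref{E:RCbox}. The extra ingredient available in $\mathsf{CK}$ is \eqref{E:ax1}, which lets me replace, up to provable equivalence, every formula $\chi\diamondto\theta$ by $\neg(\chi\boxto\neg\theta)$. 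My strategy for all the remaining, $\diamondto$-involving items is therefore uniform: unfold each $\diamondto$ via \eqref{E:ax1} and verify the resulting box-formula using classical propositional logic together with the box-machinery.

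Carrying this out, the congruence rules \eqref{E:RAdiam} and \eqref{E:RCdiam} reduce, after unfolding, to a single application of \eqref{E:RAbox} (resp.\ \eqref{E:RCbox}, applied to the equivalence $\neg\phi\leftrightarrow\neg\psi$ that $\phi\leftrightarrow\psi$ classically entails), followed by classical manipulation of the outer negations; \eqref{E:Rmdiam} is handled the same way, via \eqref{E:Rmbox} applied to the classical consequence $\neg\psi\to\neg\phi$ of $\phi\to\psi$. Item \eqref{E:T4} is immediate, as it unfolds to the classical double-negation equivalence $\neg\neg(\phi\boxto\neg\psi)\leftrightarrow(\phi\boxto\neg\psi)$. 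For \eqref{E:a6} one unfolds to $\neg\neg(\phi\boxto\neg\bot)$ and then proves $\phi\boxto\neg\bot$ outright, using $\neg\bot\leftrightarrow\top$ with \eqref{E:RCbox} and \eqref{E:a5}. The items \eqref{E:a2}, \eqref{E:a3}, \eqref{E:T2}, and \eqref{E:T3} all follow one recipe: unfold every $\diamondto$, use \eqref{E:a1} to collect boxes over a conjunction, apply \eqref{E:Rmbox} to a suitable propositional tautology (namely $\chi\wedge\neg(\psi\wedge\chi)\to\neg\psi$ for \eqref{E:a2}, $(\psi\to\chi)\wedge\neg\chi\to\neg\psi$ for \eqref{E:T2}, and $\psi\wedge\neg\chi\to\neg(\psi\to\chi)$ for \eqref{E:T3}), and finish by classical reasoning, which for \eqref{E:a3} includes the De Morgan law $\neg(A\wedge B)\leftrightarrow(\neg A\vee\neg B)$.

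I expect the one genuinely non-routine case to be \eqref{E:a4}, $((\phi\diamondto\psi)\to(\phi\boxto\chi))\to(\phi\boxto(\psi\to\chi))$, because after unfolding, the antecedent $\neg(\phi\boxto\neg\psi)\to(\phi\boxto\chi)$ is not itself a box-formula and so cannot be discharged by a single application of the box-rules. Here I would use the full strength of classical logic and split on excluded middle for $\phi\boxto\neg\psi$. If $\phi\boxto\neg\psi$ holds, then, since $\neg\psi\to(\psi\to\chi)$ is a tautology, \eqref{E:Rmbox} yields $\phi\boxto(\psi\to\chi)$; if instead $\neg(\phi\boxto\neg\psi)$ holds, the assumed implication gives $\phi\boxto\chi$, and since $\chi\to(\psi\to\chi)$ is a tautology, \eqref{E:Rmbox} again yields $\phi\boxto(\psi\to\chi)$. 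Either way the conclusion follows, and the case split is discharged classically.

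For Part~2, most of the work is already done by Lemma~\ref{L:theorems}: the schema \eqref{E:T4}, $\neg(\phi\diamondto\psi)\leftrightarrow(\phi\boxto\neg\psi)$, is deducible already in $\mathsf{IntCK}$. Negating both sides gives $\neg\neg(\phi\diamondto\psi)\leftrightarrow\neg(\phi\boxto\neg\psi)$, and adding \eqref{E:ax0} supplies classical double-negation elimination, hence $\phi\diamondto\psi\leftrightarrow\neg\neg(\phi\diamondto\psi)$. Chaining the two equivalences yields $(\phi\diamondto\psi)\leftrightarrow\neg(\phi\boxto\neg\psi)$, which is exactly \eqref{E:ax1}. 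The only thing to confirm is that \eqref{E:T4} really is available in the purely intuitionistic base, which it is by Lemma~\ref{L:theorems}.
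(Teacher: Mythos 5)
Your proposal is correct and follows essentially the same route as the paper's Appendix~\ref{A:2} proof: the box-only items carry over verbatim, each $\diamondto$-item is obtained by unfolding $\diamondto$ via \eqref{E:ax1} and reasoning classically (with \eqref{E:a4} handled by exactly the same excluded-middle case split on $\phi\boxto\neg\psi$, and \eqref{E:RAdiam}, \eqref{E:RCdiam} by contraposing under \eqref{E:RAbox}, \eqref{E:RCbox}), and Part~2 is obtained, as in the paper, from \eqref{E:T4} together with the double-negation elimination supplied by \eqref{E:ax0}. The only cosmetic difference is that the paper, once all axioms and rules of $\mathbb{ICK}$ are available in $\mathsf{CK}$, obtains the remaining items of Lemma~\ref{L:theorems} (such as \eqref{E:Rmdiam}, \eqref{E:T2}, \eqref{E:T3}, \eqref{E:T4}) by simply replaying their intuitionistic derivations inside $\mathsf{CK}$, whereas you derive each of them directly by the unfolding recipe; both are sound.
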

We sketch the proof in Appendix \ref{A:2}.

The relation between $\mathsf{IntCK}$ and $\mathsf{CK}$ is then analogous to the relation between $\mathsf{Int}$ and $\mathsf{CL}$:
\begin{proposition}\label{P:CK}
	 The following statements are true for every $\phi\in\mathcal{L}$:
	 \begin{enumerate}
	 	\item If $\phi\in\mathsf{IntCK}$, then $\phi\in \mathsf{CK}$.
	 	
	 	\item $\phi\in \mathsf{CK}$ iff $\phi\in (\mathsf{IntCK} + \eqref{E:ax0})$.
	 \end{enumerate}
 \end{proposition}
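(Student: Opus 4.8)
The plan is to prove the two statements of Proposition \ref{P:CK} in turn, leaning heavily on the technical Lemma \ref{L:CK} which isolates exactly the axiomatic overhead needed to compare the two systems. For Part 1, I would argue that every axiom and rule of $\mathbb{ICK}$ is already available in $\mathsf{CK}$. The propositional base \eqref{E:a0} of $\mathsf{Int}$ is contained in the classical base of $\mathsf{CK}$ (since $\mathsf{Int}\subseteq\mathsf{CL}$), and \eqref{E:a1}, \eqref{E:a5}, \eqref{E:mp}, \eqref{E:RAbox}, \eqref{E:RCbox} are literally among the axioms and rules used to present $\mathsf{CK}$. The remaining schemes and rules of $\mathbb{ICK}$ — namely \eqref{E:a2}, \eqref{E:a3}, \eqref{E:a4}, \eqref{E:a6}, \eqref{E:RAdiam}, \eqref{E:RCdiam} — are precisely the ones Lemma \ref{L:CK}.1 asserts to be derivable in $\mathsf{CK}$. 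Hence any $\mathbb{ICK}$-proof of $\phi$ can be transcribed into a $\mathsf{CK}$-proof by replacing each appeal to one of those schemes or rules with its derivation in $\mathsf{CK}$, giving $\phi\in\mathsf{CK}$.

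For Part 2, the right-to-left direction amounts to showing $(\mathsf{IntCK}+\eqref{E:ax0})\subseteq\mathsf{CK}$. By Part 1 every theorem of $\mathsf{IntCK}$ is in $\mathsf{CK}$, and $\eqref{E:ax0}$ is classically valid, hence in $\mathsf{CK}$; since adding a scheme already derivable in a system and closing under its rules produces no new theorems, $(\mathsf{IntCK}+\eqref{E:ax0})\subseteq\mathsf{CK}$ follows. The left-to-right direction is the substantive one: I must show $\mathsf{CK}\subseteq(\mathsf{IntCK}+\eqref{E:ax0})$. Here I would check that each ingredient of the chosen $\mathsf{CK}$-axiomatization is derivable in $(\mathsf{IntCK}+\eqref{E:ax0})$. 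The schemes \eqref{E:a0}, \eqref{E:a1}, \eqref{E:a5} and the rules \eqref{E:mp}, \eqref{E:RAbox}, \eqref{E:RCbox} all belong to $\mathbb{ICK}$ and hence to $\mathsf{IntCK}$. The scheme \eqref{E:ax0} is added by hypothesis. The only genuinely new obligation is \eqref{E:ax1}, the classical interdefinability $(\phi\diamondto\psi)\leftrightarrow\neg(\phi\boxto\neg\psi)$, and this is exactly what Lemma \ref{L:CK}.2 supplies. Transcribing a $\mathsf{CK}$-proof accordingly yields the desired inclusion.

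The main obstacle is concentrated in, and has been deliberately offloaded to, Lemma \ref{L:CK} — once that lemma is in hand, Proposition \ref{P:CK} is an essentially bookkeeping argument about which axioms and rules each presentation contains. The delicate point I would keep an eye on is the treatment of the inference rules rather than the axioms: since $\mathsf{CK}$ and $\mathbb{ICK}$ share the rule set $\{\eqref{E:mp}, \eqref{E:RAbox}, \eqref{E:RCbox}\}$ but $\mathbb{ICK}$ additionally uses $\{\eqref{E:RAdiam}, \eqref{E:RCdiam}\}$, I must confirm that the latter two are \emph{derivable rules} of $\mathsf{CK}$ (as claimed in Lemma \ref{L:CK}.1) so that their occurrences in an $\mathbb{ICK}$-proof can be locally expanded into $\mathsf{CK}$-derivations using \eqref{E:ax1} to reduce $\diamondto$ to $\boxto$. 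Conversely, for Part 2 I need no new rules beyond those of $\mathbb{ICK}$, so that direction is cleaner. I would present the argument compactly, remarking that it mirrors the standard relationship between $\mathsf{Int}$ and $\mathsf{CL}$ recorded in Lemma \ref{L:int}.
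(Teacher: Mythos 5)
Your proposal is correct and follows essentially the same route as the paper: both parts are proof-transcription arguments in which Lemma \ref{L:CK}.1 supplies the $\mathsf{CK}$-derivations of \eqref{E:a2}--\eqref{E:a4}, \eqref{E:a6}, \eqref{E:RAdiam}, \eqref{E:RCdiam}, and Lemma \ref{L:CK}.2 supplies \eqref{E:ax1} inside $(\mathsf{IntCK}+\eqref{E:ax0})$. Your explicit attention to the derived rules \eqref{E:RAdiam} and \eqref{E:RCdiam} being locally expandable in $\mathsf{CK}$ is exactly the point the paper's proof relies on as well.
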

 \begin{proof}
 	(Part 1) If $\phi\in \mathcal{L}$ and $\phi_1,\ldots,\phi_n = \phi$ is a proof in $\mathsf{IntCK}$, then we can transform it into a proof of $\phi$ in $\mathsf{CK}$ by replacing every occurrence of \eqref{E:a2}--\eqref{E:a4}, \eqref{E:a6}, and every application \eqref{E:RAdiam}, and \eqref{E:RCdiam} by the deductions given in the proof of Lemma \ref{L:CK}.1.
 	
 	(Part 2) If $\phi\in (\mathsf{IntCK} + \eqref{E:ax0})$, then we can argue as in Part 1. The only difference will be possible presence of instances of  \eqref{E:ax0} which do not require any additional work. In the other direction, if $\phi\in \mathsf{CK}$ and $\phi_1,\ldots,\phi_n = \phi$ is a proof in $\mathsf{CK}$, then we can transform it into a proof of $\phi$ in $\mathsf{IntCK}$ by replacing every occurrence of \eqref{E:ax1} by the deduction given in the proof of Lemma \ref{L:CK}.2.
 \end{proof}
Another logic that is very natural to compare with $\mathsf{IntCK}$ is the system of intuitionistic conditional logic $\mathsf{ICK}$ introduced by Y. Weiss in \cite{weiss}. $\mathsf{ICK}$ is defined over the $(\diamondto)$-free fragment of $\mathcal{L}$ which we denote by $\mathcal{L}_\boxto$. One of its complete axiomatizations is obtained by simply omitting \eqref{E:ax0} and \eqref{E:ax1} from $\mathsf{CK}$. One (Chellas-style) variant of semantics\footnote{The semantics given in \cite{weiss} and \cite{weiss-thesis} uses the formula-indexed binary relations but yields the same logic as the semantics we give in this paper.} for $\mathsf{ICK}$ can be given, if we replace conditions \eqref{Cond:1} and \eqref{Cond:2} in Definition \eqref{D:chellas-model} by the following condition to be satisfied for every $X \subseteq W$:
\begin{equation}\label{Cond:w}\tag{cw}
	\leq\circ R_X\subseteq R_X\circ\leq
\end{equation}
We will call the resulting models \textit{Weiss models}. The satisfaction relation used by Y. Weiss (we will be denoting it by $\models_w$) is also different from $\models$ in that the inductive clause for $\diamondto$ is no longer needed and the inductive clause for $\boxto$ is given in the following, more classically-minded\footnote{The Chellas variety of Kripke semantics of $\mathsf{CK}$ is usually defined by adopting this clause over the class of Kripke models that is given as in Definition \ref{D:chellas-model} except that the pre-order $\leq$ is now omitted and conditions \eqref{Cond:1} and \eqref{Cond:2} are no longer imposed. The clause that we used in Section \ref{sub:lands} for $\diamondto$ can be derived in this semantics as a by-product of adopting \eqref{E:ax1} and thus can be also deemed classical. Cf. the semantic clause for $\exists$ which is exactly the same in both classical and intuitionistic first-order logic.} version:
$$
\mathcal{M}, w\models_w \psi \boxto \chi \Leftrightarrow (\forall u \in W)(R_{\|\psi\|_\mathcal{M}}(w, u) \Rightarrow\mathcal{M}, u\models_c \chi)
$$ 
The relations between $\mathsf{ICK}$ and $\mathsf{IntCK}$ can be summarized as follows:
\begin{proposition}\label{P:ICK}
	We have $\mathsf{ICK}\subseteq\mathsf{IntCK}$. However, $\mathsf{IntCK}$ extends $\mathsf{ICK}$ non-conservatively, in that we have $(\neg\neg(\top\boxto\bot)\to (\top\boxto\bot))\in(\mathsf{IntCK}\cap \mathcal{L}_\boxto)\setminus\mathsf{ICK}$.	
\end{proposition}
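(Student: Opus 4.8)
The plan is to treat the two assertions separately, since the inclusion is mere bookkeeping while the non-conservativity requires a semantic separation. For $\mathsf{ICK}\subseteq\mathsf{IntCK}$, I would simply observe that the complete axiomatization of $\mathsf{ICK}$ (obtained by deleting \eqref{E:ax0} and \eqref{E:ax1} from $\mathsf{CK}$) consists of the schemes \eqref{E:a0}, \eqref{E:a1}, \eqref{E:a5} together with the rules \eqref{E:mp}, \eqref{E:RAbox}, \eqref{E:RCbox}, each of which is an axiom or a rule of $\mathbb{ICK}$. Hence every $\mathsf{ICK}$-proof is verbatim an $\mathbb{ICK}$-proof, so $\vdash_{\mathsf{ICK}}\phi$ implies $\vdash\phi$, and Theorem \ref{T:completeness} (or already Lemma \ref{L:soundness}) yields $\phi\in\mathsf{IntCK}$.

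For the witness formula $\theta := \neg\neg(\top\boxto\bot)\to(\top\boxto\bot)$, I would first show $\theta\in\mathsf{IntCK}$ by a short syntactic derivation exploiting that $\top\boxto\bot$ is, up to provable equivalence, a negation. Concretely, \eqref{E:T4} with $\phi=\psi=\top$ gives $\vdash(\top\boxto\neg\top)\leftrightarrow\neg(\top\diamondto\top)$; since $\vdash\neg\top\leftrightarrow\bot$ intuitionistically, \eqref{E:RCbox} gives $\vdash(\top\boxto\bot)\leftrightarrow(\top\boxto\neg\top)$, so that $\vdash(\top\boxto\bot)\leftrightarrow\neg(\top\diamondto\top)$. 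Writing $A:=\top\diamondto\top$, the formula $\theta$ is therefore provably equivalent (using that provable equivalence is a congruence, by Lemma \ref{L:int}.1) to $\neg\neg\neg A\to\neg A$, an instance of the intuitionistic triple-negation law; hence $\vdash\theta$ and $\theta\in\mathsf{IntCK}$ by Theorem \ref{T:completeness}. One can also argue semantically: given $\mathcal{M},w\models\neg\neg(\top\boxto\bot)$ and a putative $v\geq w$ with $R_{\|\top\|}(v,u)$, condition \eqref{Cond:1} lifts this $R$-arrow to one issuing from some $v'\geq v$ which $\neg\neg(\top\boxto\bot)$ forces to satisfy $\top\boxto\bot$ — but an outgoing arrow at $v'$ contradicts $v'\models\top\boxto\bot$; this is exactly the use of \eqref{Cond:1} that Weiss models lack.

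To see $\theta\notin\mathsf{ICK}$, I would refute it in a Weiss model, relying on the soundness of $\mathsf{ICK}$ for that class of models. Take $W:=\{w,t\}$ with $\leq$ the reflexive order generated by $w\leq t$, set $V(p):=\emptyset$ for all $p$, and let $R$ contain the single triple $(w,W,w)$, so that $R_W=\{(w,w)\}$ and $R_X=\emptyset$ for $X\neq W$. A direct check confirms \eqref{Cond:w} for every $X$ (the only nontrivial case being $X=W$). Since $\|\top\|_{\mathcal{M}}=W$, the local Weiss clause gives $x\models_w\top\boxto\bot$ iff $x$ has no $R_W$-successor; thus $t\models_w\top\boxto\bot$ while $w\not\models_w\top\boxto\bot$. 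Because the dead end $t$ lies above both worlds, $w\models_w\neg\neg(\top\boxto\bot)$, so at $w$ the antecedent of $\theta$ holds but its consequent fails, whence $\mathcal{M},w\not\models_w\theta$. Together with the previous paragraph, $\theta$ witnesses the non-conservative extension.

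The main obstacle is the refutation step, specifically the interplay among the three requirements on the Weiss model: it must satisfy \eqref{Cond:w} for all $X$, it must keep the root $w$ \emph{live} (an $R_W$-successor exists, so $\top\boxto\bot$ fails there), yet it must place $R_W$-dead ends cofinally above $w$ (so that $\neg\neg(\top\boxto\bot)$ holds). The two-point chain is the minimal configuration reconciling these, and the conceptual point worth stressing is that the \emph{local} Weiss clause for $\boxto$ makes $\top\boxto\bot$ non-persistent along $\leq$, which is precisely what the Chellas conditions \eqref{Cond:1}--\eqref{Cond:2} together with the persistent Chellas clause forbid; this mismatch is the source of the gap between the two logics.
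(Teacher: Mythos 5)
Your proposal is correct and follows essentially the same route as the paper: the inclusion by verbatim proof-embedding, membership of $\neg\neg(\top\boxto\bot)\to(\top\boxto\bot)$ in $\mathsf{IntCK}$ by a syntactic derivation built on \eqref{E:T4} and the intuitionistic triple-negation law, and non-membership in $\mathsf{ICK}$ by refuting the formula in a finite Weiss model (relying on soundness of $\mathsf{ICK}$ for that class). The only differences are cosmetic: your countermodel uses two worlds with $R_W = \{(w,w)\}$ where the paper uses three worlds with a separate $R_W$-successor, and you add an optional direct semantic verification of validity via \eqref{Cond:1}, which the paper does not need.
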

\begin{proof}
	It is clear that every proof in $\mathsf{ICK}$ is also a proof in $\mathsf{IntCK}$. As for the non-conservativity claim, it is easy to see that $\neg\neg(\top\boxto\bot)\to (\top\boxto\bot)$ is derivable in $\mathsf{IntCK}$:
	\begin{align}
		&\neg\neg(\top\boxto\bot)\leftrightarrow\neg\neg\neg(\top\diamondto\top)\label{E:ick2} &&\text{by \eqref{E:T4}, $\mathsf{Int}$}\\
		&\neg\neg(\top\boxto\bot)\leftrightarrow\neg(\top\diamondto\top)\label{E:ick3} &&\text{by \eqref{E:ick2}, $\mathsf{Int}$}\\
		&((\top\diamondto \top)\to (\top\boxto \bot)) \to (\top\boxto\bot)\label{E:ick6} &&\text{by \eqref{E:a4}, $\mathsf{Int}$}\\
		&\neg\neg(\top\boxto\bot)\to (\top\boxto\bot)&&\text{by \eqref{E:ick3}, \eqref{E:ick6}, $\mathsf{Int}$}
	\end{align} 
	However, if we consider the Weiss model $\mathcal{M} = (W, \leq, R, V)$ where $W: = \{w,v,u\}$, $\leq$ is the reflexive closure of $\{(w,v)\}$, $R := \{(w, W, u)\}$, and $V(p) = \emptyset$ for every $p \in Var$, then we see that $\mathcal{M}, v\models_w \top\boxto\bot$, hence also $\mathcal{M}, w\models_w \neg\neg(\top\boxto\bot)$; however, since $\mathcal{M}, u\not\models_w \bot$ and we have $R_{\|\top\|_\mathcal{M}}(w, u)$, we get that $\mathcal{M}, w\not\models_w \top\boxto\bot$. 
\end{proof}
The question then arises as to how one should interpret this difference between $\mathsf{ICK}$ and $\mathsf{IntCK}\cap \mathcal{L}_\boxto$; is it due to $\mathsf{ICK}$ being incomplete over $\mathcal{L}_\boxto$, or is the reason that $\mathsf{IntCK}$ smuggles in some principles that are not intuitionistically acceptable? The latter answer seems to be favored by the fact that the elimination of double negation is not generally favored by intuitionistic reasoning; moreover, it is clear that the proof of $(\neg\neg(\top\boxto\bot)\to (\top\boxto\bot))$ in $\mathsf{IntCK}$ essentially uses the principles that are only expressible with the help of the additional connective $\diamondto$ and can be therefore seen as, loosely speaking, `impure'. However, the former answer is clearly favored by Theorem \ref{T:foil} of this paper which implies that the standard translation of the questionable formula $(\neg\neg(\top\boxto\bot)\to (\top\boxto\bot))$ is a valid first-order intuitionistic principle. Nevertheless, the weight of the latter argument is somewhat diminished by the fact that the proof of Theorem \ref{T:foil} in this paper depends on classical principles.

Turning once more to $\mathsf{CK}$, it has been shown that it corresponds to the basic modal logic $\mathsf{K}$, which is defined over the language $\mathcal{L}_m$ given by the following BNF:
$$
\phi::= p\mid \top\mid \bot\mid \phi\wedge\phi\mid\phi\vee\phi\mid\phi\to\phi\mid\Box\phi\mid\Diamond\phi.
$$
More precisely, consider the translation $Tr:\mathcal{L}_m\to\mathcal{L}$ defined by the following induction on the construction of $\phi \in \mathcal{L}_m$:
\begin{align*}
	Tr(\psi)&:= \psi&&\psi\in Var\cup\{\top,\bot\}\\
	Tr(\psi\ast\chi)&:= Tr(\psi)\ast Tr(\chi)&&\ast\in \{\wedge, \vee, \to\}\\
	Tr(\Box\psi)&:= \top\boxto Tr(\psi)&&Tr(\Diamond\psi):= \top\diamondto Tr(\psi)
\end{align*}
It follows from the results of \cite{lowe} that $\mathsf{K}$ is embedded into $\mathsf{CK}$ by $Tr$ in the sense that, for every $\phi \in \mathcal{L}_m$, $\phi \in \mathsf{K}$ iff $Tr(\phi) \in \mathsf{CK}$. It is natural to expect that $Tr$ also embeds some basic intuitionistic modal logic into $\mathsf{IntCK}$ and this is indeed the case for the basic intuitionistic modal logic $\mathsf{IK}$ introduced independently in \cite{fischer-servi} and \cite{ps}. 

Just like $\mathsf{K}$, $\mathsf{IK}$ is defined over $\mathcal{L}_m$. One variant of its complete axiomatization can be given by adding to \eqref{E:a0} and \eqref{E:mp} the following additional axiomatic schemes plus a new rule of inference:
\begin{align}
	&\Box(\phi\to\psi)\to(\Box\phi\to\Box\psi)\label{E:aa1}\tag{a1}\\
	&\Box(\phi\to\psi)\to(\Diamond\phi\to\Diamond\psi)\label{E:aa2}\tag{a2}\\
	&\neg\Diamond\bot\label{E:aa3}\tag{a3}\\
	&\Diamond(\phi\vee\psi)\to(\Diamond\phi\vee\Diamond\psi)\label{E:aa4}\tag{a4}\\
	&(\Diamond\phi\to\Box\psi)\to\Box(\phi\to\psi)\label{E:aa5}\tag{a5}\\
	&\text{From }\phi\text{ infer }\Box\phi\label{E:nnec}\tag{nec}	
\end{align}
Again, we prepare the result connecting $\mathsf{IK}$ to $\mathsf{IntCK}$ with a technical lemma:
\begin{lemma}\label{L:IK}
	The following theorems and rules are deducible in $\mathsf{IK}$ for all $\phi,\psi \in \mathcal{L}_m$:
	\begin{align}
		&(\Box\phi\wedge\Box\psi)\leftrightarrow\Box(\phi\wedge\psi)\label{E:t1}\tag{t1}\\
		&(\Diamond\phi\wedge\Box\psi)\to\Diamond(\phi\wedge\psi)\label{E:t2}\tag{t2}\\
		&\Diamond(\phi\vee\psi)\leftrightarrow(\Diamond\phi\vee\Diamond\psi)\label{E:t3}\tag{t3}\\
		&\Box\top\label{E:t4}\tag{t4}\\
		&\text{From }\phi\to\psi\text{ infer }\Box\phi\to\Box\psi\label{E:rl1}\tag{r1}\\
		&\text{From }\phi\to\psi\text{ infer }\Diamond\phi\to\Diamond\psi\label{E:rl2}\tag{r2}\\
		&\text{From }\phi\leftrightarrow\psi\text{ infer }\Box\phi\leftrightarrow\Box\psi\label{E:rl3}\tag{r3}\\
		&\text{From }\phi\leftrightarrow\psi\text{ infer }\Diamond\phi\leftrightarrow\Diamond\psi\label{E:rl4}\tag{r4}
	\end{align}
\end{lemma}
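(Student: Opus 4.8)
The plan is to derive everything from the two distribution axioms \eqref{E:aa1} and \eqref{E:aa2}, the necessitation rule \eqref{E:nnec}, and \eqref{E:aa4}, following exactly the pattern of the proof of Lemma \ref{L:theorems}; indeed \eqref{E:t1}--\eqref{E:t4} are precisely the $Tr$-images of \eqref{E:a1}, \eqref{E:a2}, \eqref{E:a3}, and \eqref{E:a5}, so the derivations transpose almost verbatim. I would begin with the four derived rules, since the theorems depend on them. For \eqref{E:rl1} I would take the premise $\phi\to\psi$, apply \eqref{E:nnec} to obtain $\Box(\phi\to\psi)$, and then detach with the relevant instance of \eqref{E:aa1} via \eqref{E:mp} to get $\Box\phi\to\Box\psi$; the rule \eqref{E:rl2} is identical except that \eqref{E:aa2} replaces \eqref{E:aa1}. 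The congruence rules \eqref{E:rl3} and \eqref{E:rl4} then follow by unfolding $\phi\leftrightarrow\psi$ into its two conjuncts, applying \eqref{E:rl1} (resp.\ \eqref{E:rl2}) to each, and reconjoining inside $\mathsf{Int}$. Finally \eqref{E:t4} is immediate: $\vdash\top$ holds in $\mathsf{Int}$, so \eqref{E:nnec} yields $\Box\top$.

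For the theorems I would proceed as follows. The right-to-left half of \eqref{E:t1} uses only monotonicity: the $\mathsf{Int}$-theorems $(\phi\wedge\psi)\to\phi$ and $(\phi\wedge\psi)\to\psi$ give, via \eqref{E:rl1}, $\Box(\phi\wedge\psi)\to\Box\phi$ and $\Box(\phi\wedge\psi)\to\Box\psi$, which combine propositionally. The left-to-right half is the first place where the distribution axiom does real work: I would feed the $\mathsf{Int}$-theorem $\phi\to(\psi\to(\phi\wedge\psi))$ into \eqref{E:rl1} to obtain $\Box\phi\to\Box(\psi\to(\phi\wedge\psi))$, chain this with the instance $\Box(\psi\to(\phi\wedge\psi))\to(\Box\psi\to\Box(\phi\wedge\psi))$ of \eqref{E:aa1}, and rearrange inside $\mathsf{Int}$ to $(\Box\phi\wedge\Box\psi)\to\Box(\phi\wedge\psi)$. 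Theorem \eqref{E:t3} is handled symmetrically: one direction is exactly \eqref{E:aa4}, while the converse comes from applying \eqref{E:rl2} to the injections $\phi\to(\phi\vee\psi)$ and $\psi\to(\phi\vee\psi)$ and then using intuitionistic disjunction elimination.

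The one derivation that requires a little care is \eqref{E:t2}, which is the modal counterpart of the step \eqref{E:T2} in Lemma \ref{L:theorems}. Here I would start from the $\mathsf{Int}$-theorem $\psi\to(\phi\to(\phi\wedge\psi))$, apply \eqref{E:rl1} to get $\Box\psi\to\Box(\phi\to(\phi\wedge\psi))$, and then compose with the instance $\Box(\phi\to(\phi\wedge\psi))\to(\Diamond\phi\to\Diamond(\phi\wedge\psi))$ of \eqref{E:aa2}. This yields $\Box\psi\to(\Diamond\phi\to\Diamond(\phi\wedge\psi))$, from which $(\Diamond\phi\wedge\Box\psi)\to\Diamond(\phi\wedge\psi)$ follows by propositional reasoning. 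I expect this to be the main (though still routine) obstacle, as it is the only item mixing $\Box$ and $\Diamond$ and hence the only one requiring a judicious choice of the intuitionistic tautology used to prime \eqref{E:aa2}. Note, finally, that just as in Lemma \ref{L:theorems} the axioms \eqref{E:aa3} and \eqref{E:aa5} play no role in establishing \eqref{E:t1}--\eqref{E:t4} and \eqref{E:rl1}--\eqref{E:rl4}.
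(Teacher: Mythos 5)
Your proposal is correct and follows essentially the same route as the paper's own proof: the rules come from necessitation plus the two distribution axioms \eqref{E:aa1}/\eqref{E:aa2}, \eqref{E:t4} is necessitation applied to $\top$, \eqref{E:t3} combines \eqref{E:aa4} with monotonicity of $\Diamond$, and \eqref{E:t2} is obtained by priming \eqref{E:aa2} with the intuitionistic theorem $\psi\to(\phi\to(\phi\wedge\psi))$, which is exactly the paper's displayed derivation. The only (inessential) slip is calling \eqref{E:t1}--\eqref{E:t4} the $Tr$-images of the conditional axioms; they are their images under the reverse translation $\overline{Tr}$, as $Tr$ goes from $\mathcal{L}_m$ into $\mathcal{L}$.
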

\begin{proof}
	The theorem \eqref{E:t1} and the rule \eqref{E:rl1} can be deduced as in $\mathsf{K}$. To obtain the deduction of \eqref{E:rl2}, one needs to replace the occurrence of  \eqref{E:aa1} in the deduction of \eqref{E:rl1} by the respective occurrence of \eqref{E:aa2}. Rules \eqref{E:rl3} and \eqref{E:rl4} are deduced by applying rules \eqref{E:rl1} and \eqref{E:rl2}, respectively plus the definition of $\leftrightarrow$. The theorem \eqref{E:t4} can be deduced by applying \eqref{E:nnec} to the provable formula $\top$; \eqref{E:t3} is just \eqref{E:aa4}, the other direction follows by applying \eqref{E:rl2} to provable formulas $\phi\to(\phi\vee\psi)$ and $\psi\to(\phi\vee\psi)$. Finally, \eqref{E:t2} can be deduced as follows:
	\begin{align}
		&\Box\psi\to\Box(\phi\to(\phi\wedge\psi))\label{E:ik1}&&\text{\eqref{E:a0}, \eqref{E:mp}, and \eqref{E:rl1}}\\
		&\Box\psi\to(\Diamond\phi\to\Diamond(\phi\wedge\psi))\label{E:ik3}&&\text{\eqref{E:ik1}, \eqref{E:aa2}, \eqref{E:a0}. \eqref{E:mp}}\\
		&(\Diamond\phi\wedge\Box\psi)\to\Diamond(\phi\wedge\psi))&&\text{\eqref{E:ik3}, \eqref{E:a0}. \eqref{E:mp}}
	\end{align}	
\end{proof}
We now claim that:
\begin{proposition}\label{P:IK}
	For every $\phi \in \mathcal{L}_m$, $\phi\in\mathsf{IK}$ iff $Tr(\phi)\in\mathsf{IntCK}$.
\end{proposition}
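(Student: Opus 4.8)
The plan is to establish the two implications separately. The direction from $\mathsf{IK}$ to $\mathsf{IntCK}$ I would prove syntactically, by induction on the length of an $\mathsf{IK}$-derivation, showing that $Tr$ sends every axiom to a theorem of $\mathbb{ICK}$ and commutes with the rules, so that $\vdash Tr(\phi)$ whenever $\phi \in \mathsf{IK}$; membership $Tr(\phi) \in \mathsf{IntCK}$ then follows by Theorem \ref{T:completeness}. The converse I would prove semantically, by contraposition, converting an $\mathsf{IK}$-countermodel into an $\mathsf{IntCK}$-countermodel.

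For the left-to-right direction the key point is that $Tr$ is a homomorphism for the propositional connectives. Hence every instance of \eqref{E:a0} translates to a substitution instance of an intuitionistic propositional theorem, derivable by Lemma \ref{L:int}.1, and each application of \eqref{E:mp} translates to an application of \eqref{E:mp} since $Tr(\phi \to \psi) = Tr(\phi) \to Tr(\psi)$. For the modal material one checks that $Tr$ of \eqref{E:aa1} is the instance of \eqref{E:T1} whose conditioning formula is $\top$, that $Tr$ of \eqref{E:aa2} is the corresponding instance of \eqref{E:T2}, that $Tr$ of \eqref{E:aa3} is an instance of \eqref{E:a6}, that $Tr$ of \eqref{E:aa4} is the left-to-right half of the instance of \eqref{E:a3} with conditioning formula $\top$, and that $Tr$ of \eqref{E:aa5} is the instance of \eqref{E:a4} with conditioning formula $\top$; finally, the rule \eqref{E:nnec} translates to the derived rule \eqref{E:Rnec} with $\psi := \top$. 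All of the required theorems and derived rules are supplied by Lemma \ref{L:theorems}, so the induction goes through.

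For the right-to-left direction I would invoke the completeness of $\mathsf{IK}$ relative to its birelational Kripke semantics, established in \cite{fischer-servi} and \cite{ps}. The crucial observation is that the two confluence (zig-zag) conditions relating $\leq$ and the modal accessibility relation in that semantics are literally conditions \eqref{Cond:1} and \eqref{Cond:2} of Definition \ref{D:chellas-model}, and that, since $\|\top\|_\mathcal{M} = W$, the $\mathsf{IntCK}$-clauses for $\top\boxto\chi$ and $\top\diamondto\chi$ reduce to the $\mathsf{IK}$-clauses for $\Box\chi$ and $\Diamond\chi$ read off the single relation $R_W$. Concretely, given an $\mathsf{IK}$-model $(W, \leq, R', V)$ refuting $\phi$ at a point $w$, I would form the Chellas model $\mathcal{M} = (W, \leq, R, V)$ with the \emph{constant} family $R_X := R'$ for every $X \subseteq W$; conditions \eqref{Cond:1} and \eqref{Cond:2} for $\mathcal{M}$ hold precisely because they hold for $R'$ as an $\mathsf{IK}$-frame, and persistence of $V$ is inherited. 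A routine induction on $\psi \in \mathcal{L}_m$ then shows that $\mathcal{M}, v \models Tr(\psi)$ iff $\psi$ holds at $v$ in the original model, the modal steps being exactly the reduction just noted. Hence $\mathcal{M}, w \not\models Tr(\phi)$, so $Tr(\phi)$ is not valid, and Lemma \ref{L:soundness} together with Theorem \ref{T:completeness} yields $Tr(\phi) \notin \mathsf{IntCK}$.

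The main obstacle is the faithfulness direction, and within it the exact matching of the two semantics: one must confirm that the birelational frame conditions of $\mathsf{IK}$ coincide with \eqref{Cond:1}--\eqref{Cond:2}, and that evaluating $\top\boxto$ and $\top\diamondto$ against the full powerset-indexed family $R$ collapses, via $\|\top\|_\mathcal{M} = W$, to evaluation against the single relation $R_W = R'$. Once this correspondence is secured, the constant-family construction is legitimate and the truth-preservation induction is mechanical; the only external input required is the known Kripke-completeness of $\mathsf{IK}$.
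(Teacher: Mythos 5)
Your left-to-right direction is essentially the paper's own proof: the same syntactic induction, with the same table of correspondences (\eqref{E:aa1} to \eqref{E:T1}, \eqref{E:aa2} to \eqref{E:T2}, \eqref{E:aa3} to \eqref{E:a6}, \eqref{E:aa4} to one half of \eqref{E:a3}, \eqref{E:aa5} to \eqref{E:a4}, and \eqref{E:nnec} to \eqref{E:Rnec}), all supplied by Lemma \ref{L:theorems}. Your right-to-left direction, however, genuinely diverges. The paper stays entirely proof-theoretic: it defines a reverse translation $\overline{Tr}:\mathcal{L}\to\mathcal{L}_m$ that forgets antecedents ($\overline{Tr}(\psi\boxto\chi):=\Box\overline{Tr}(\chi)$, $\overline{Tr}(\psi\diamondto\chi):=\Diamond\overline{Tr}(\chi)$), checks that $\overline{Tr}(Tr(\phi))=\phi$, and then pushes an $\mathbb{ICK}$-derivation of $Tr(\phi)$ through $\overline{Tr}$, using Lemma \ref{L:IK} to discharge the images of the $\mathbb{ICK}$ axioms and rules; the antecedent-collapse makes the images of \eqref{E:RAbox}/\eqref{E:RAdiam} applications trivial tautologies $\Box\psi\leftrightarrow\Box\psi$. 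You instead argue semantically: invoke the Kripke completeness of $\mathsf{IK}$ for birelational models, observe that the birelational frame conditions are exactly \eqref{Cond:1} and \eqref{Cond:2} (which is correct: unfolding the relational compositions, \eqref{Cond:1} is the condition pairing $w\leq w'$, $wR_Xv$ with $v'$, and \eqref{Cond:2} the dual one), and turn an $\mathsf{IK}$-countermodel into a Chellas model via the constant family $R_X:=R'$, under which the clauses for $\top\boxto$ and $\top\diamondto$ collapse to the $\mathsf{IK}$ clauses for $\Box$ and $\Diamond$; since $\mathsf{IntCK}$ is defined as the set of valid formulas, the countermodel immediately gives $Tr(\phi)\notin\mathsf{IntCK}$ (your appeal to Lemma \ref{L:soundness} and Theorem \ref{T:completeness} here is redundant but harmless). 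Both routes are sound. The paper's buys self-containment: it needs nothing about $\mathsf{IK}$ beyond its axiomatization and the derived rules of Lemma \ref{L:IK}. Yours buys conceptual transparency --- it exhibits birelational $\mathsf{IK}$-models as precisely the constant-family Chellas models, which is illuminating and consonant with the paper's later remark that Simpson's non-definability countermodels transfer to $\mathsf{IntCK}$ --- but at the cost of importing the completeness of $\mathsf{IK}$ from the literature as an external, nontrivial input.
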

\begin{proof}
	If $\phi\in\mathsf{IK}$ then let $\phi_1,\ldots,\phi_n = \phi$ be a deduction of $\phi$ in $\mathsf{IK}$. Consider the sequence $Tr(\phi_1),\ldots,Tr(\phi_n) = Tr(\phi)$. The translation $Tr$ leaves intact every instance of \eqref{E:a0} and \eqref{E:mp} and maps every instance of \eqref{E:aa1}, (resp. \eqref{E:aa2}, \eqref{E:aa3}, \eqref{E:aa4}, \eqref{E:aa5}) into an instance of \eqref{E:T1} (resp. \eqref{E:T2}, \eqref{E:a6}, one half of \eqref{E:a3}, \eqref{E:a4}). Similarly, every application of the rule \eqref{E:nnec} is mapped by $Tr$ into an application of \eqref{E:Rnec}. Therefore, one can straightforwardly extend $Tr(\phi_1),\ldots,Tr(\phi_n) = Tr(\phi)$ to a proof of $Tr(\phi)$ in $\mathsf{IntCK}$ by inserting the variants of deductions sketched in the proof of Lemma \ref{L:theorems}.
	
	In the other direction, let $\psi_1,\ldots,\psi_n = Tr(\phi)$ be a deduction of $Tr(\phi)$ in $\mathsf{IntCK}$. Consider the mapping $\overline{Tr}:\mathcal{L}\to\mathcal{L}_m$ defined by induction on the construction of $\phi\in\mathcal{L}$:
	\begin{align*}
		\overline{Tr}(\psi)&:= \psi&&\psi\in Var\cup\{\top,\bot\}\\
		\overline{Tr}(\psi\ast\chi)&:= \overline{Tr}(\psi)\ast \overline{Tr}(\chi)&&\ast\in \{\wedge, \vee, \to\}\\
		\overline{Tr}(\psi\boxto\chi)&:= \Box\overline{Tr}(\chi)&&\overline{Tr}(\psi\diamondto\chi):= \Diamond\overline{Tr}(\chi)
	\end{align*}
The following can be easily proved by induction on the construction of $\phi\in\mathcal{L}_m$:

\textit{Claim}. For every $\phi\in\mathcal{L}_m$, $\overline{Tr}(Tr(\phi)) = \phi$.

Indeed, both basis and every case in the induction step are pretty straightforward. As an example, we consider the case when $\phi = \Diamond\psi$. We have then $\overline{Tr}(Tr(\Diamond\psi)) = \overline{Tr}(\top\diamondto\psi) = \Diamond\psi$. Claim 1 is proven.

Turning back to our proof of $Tr(\phi)$ in $\mathsf{IntCK}$, we consider the sequence of $\mathcal{L}_m$-formulas $\overline{Tr}(\psi_1),\ldots,\overline{Tr}(\psi_n) = \overline{Tr}(Tr(\phi)) = \phi$, where the last equality holds by Claim 1. We observe that the translation given by $\overline{Tr}$ leaves intact every instance of \eqref{E:a0} and \eqref{E:mp}; as for the other axioms and rules, $\overline{Tr}$ maps every instance of \eqref{E:a1}, (resp. \eqref{E:a2}, \eqref{E:a3}, \eqref{E:a4}, \eqref{E:a5}, \eqref{E:a6}) into an instance of \eqref{E:t1} (resp. \eqref{E:t2},  \eqref{E:t3}, \eqref{E:aa5}, \eqref{E:t4}, \eqref{E:aa3}). Similarly, every application of the rule \eqref{E:RCbox} (resp. \eqref{E:RCdiam}) is mapped by $\overline{Tr}$ into an application of the rule \eqref{E:rl3} (resp. \eqref{E:rl4}). Finally, the conclusion of every application of the rule \eqref{E:RAbox} (resp. \eqref{E:RAdiam}) is mapped by $\overline{Tr}$ into a formula of the form $\Box\psi\leftrightarrow\Box\psi$ (resp. $\Diamond\psi\leftrightarrow\Diamond\psi$) which is clearly deducible from \eqref{E:a0} and \eqref{E:mp}. Therefore, one can straightforwardly extend $\overline{Tr}(\psi_1),\ldots,\overline{Tr}(\psi_n)  = \phi$ to a proof of $\phi$ in $\mathsf{IK}$ by inserting the variants of deductions sketched in the proof of Lemma \ref{L:IK}.
\end{proof} 
As a further result of the tight connection between $\mathsf{IK}$ and $\mathsf{IntCK}$ we observe that the countermodels that show in \cite[p. 54--55]{simpson} the mutual non-definability of $\Box$ and $\Diamond$ in $\mathsf{IK}$ can be re-used to show the mutual non-definability of $\boxto$ and $\diamondto$ in $\mathsf{IntCK}$, thus answering the concern expressed in the passage from \cite{ciardelliliu} quoted in the introduction to this paper.

We add that the gap between $\mathsf{IntCK}$ and $\mathsf{ICK}$ is also mirrored at the level of $\mathcal{L}_m$ as the gap between the $\Diamond$-free fragment of $\mathsf{IK}$ and the system $\mathsf{HK}\Box$ introduced in \cite{bozicdosen}, which Y. Weiss cites in \cite[p. 138--139, Footnote 11]{weiss-thesis} as one of the sources for $\mathsf{ICK}$. Indeed, one can easily show both that $\mathsf{ICK}$ stands to $\mathsf{HK}\Box$ in the same relation ascribed to $\mathsf{IntCK}$ and $\mathsf{IK}$ in Proposition \ref{P:IK} above, and that $\neg\neg\Box\bot\to \Box\bot$ is in the  $\Diamond$-free fragment of $\mathsf{IK}$ but outside $\mathsf{HK}\Box$. This is all the more surprising in view of some confusing claims made in the existing literature.\footnote{For example, the author of \cite{simpson} is clearly in favor of $\mathsf{IK}$ as the basic intuitionistic modal logic; yet he says, right after introducing the axiomatization of $\mathsf{HK}\Box$ in Figure 3--3 that: ``The logic of Figure 3--3 is uncontroversially the intuitionistic analogue of $\mathsf{K}$ in the $\Diamond$-free fragment''\cite[p. 56]{simpson}.}

\subsection{First-order intuitionistic logic}\label{sub:foil}
We define the first-order intuitionistic logic $\mathsf{FOIL}$ over the langauge $\mathcal{L}_{fo}$,\footnote{More precisely, we only define the version of $\mathsf{FOIL}$ over a particular vocabulary. But we will never need other versions of $\mathsf{FOIL}$ in this paper, so, for the purposes of the present discourse, we can identify $\mathsf{FOIL}$ with its particular variant that we introduce below.} based on a countable set $Ind$ of individual variables and given by the following BNF:
$$
\phi::= px\mid Rxyz\mid Ox\mid Sx\mid Exy\mid x \equiv y\mid \top\mid \bot\mid\phi\wedge\phi\mid\phi\vee\phi\mid\phi\to\phi\mid\forall x\phi\mid\exists x\phi,
$$
where $p \in Var$ and $x,y,z \in Ind$. The formulas of the form $px$, $Rxyz$, $Ox$, $Sx$, $Exy$, $x\equiv y$, $\top$, and $\bot$, will be called \textit{atomic formulas}, or simply \textit{atoms}. We will continue to use the abbreviations $\leftrightarrow$ and $\neg$; additionally, we will use $(\forall x)_O\phi$ as an abbreviation for $\forall x(Ox\to\phi)$.  
Given a formula $\phi \in \mathcal{L}_{fo}$, we can inductively define its sets of \textit{free} and \textit{bound} variables in a standard way (see, e.g. \cite[p. 64]{vandalen}). These sets, denoted by $FV(\phi)$ and $BV(\phi)$, respectively, are always finite. These notions can be extended to an arbitrary $\Gamma \subseteq \mathcal{L}_{fo}$, although $FV(\Gamma)$ and $BV(\Gamma)$ need not be finite. If $x \in Ind \setminus(FV(\phi)\cup BV(\phi))$, then $x$ is said to be \textit{fresh} for $\phi$. Given any $n \in \omega$ and any $x_1,\ldots, x_n \in Ind$, we will denote by $\mathcal{L}^{\{x_1,\ldots, x_n\}}_{fo}$ the set $\{\phi \in  \mathcal{L}_{fo}\mid FV(\phi) = \{x_1,\ldots, x_n\}\}$. If $\phi \in  \mathcal{L}^\emptyset_{fo}$, then $\phi$ is called a \textit{sentence}. Finally, given a $\phi \in \mathcal{L}_{fo}$, and some $x,y \in Ind$ such that $y$ is fresh for $\phi$, we can define the result $(\phi)^y_x$ of substituting $y$ for $x$ in $\phi$ simply as the result of replacing free occurrences of $x$ in $\phi$ with the occurrences of $y$.

While the most popular semantics for $\mathsf{FOIL}$ is given by Kripke models (see e.g. \cite[Section 5.3]{vandalen}), we will use for this logic a slightly more involved but equivalent semantics based on intuitionistic Kripke sheaves.
The following definition provides the necessary details:
\begin{definition}\label{D:sheaf}
	A \textit{Kripke sheaf} is a structure of the form $\mathfrak{S} = (W, \leq, \mathfrak{A}, \mathbb{H})$ such that:
	\begin{itemize}
		\item $W \neq \emptyset$ is the set of worlds or nodes.
		
		\item $\leq$ is a pre-order on $W$.
		
		\item $\mathfrak{A}$ is a function, returning, for every $w\in W$ a classical first-order model $\mathfrak{A}_w = (A_w, \iota_w)$ over the vocabulary (sometimes called signature) $\Sigma = \{p^1\mid p\in Var\}\cup \{R^3, O^1, S^1, In^2\}$, where $A_w \neq \emptyset$ is the domain and $\iota_w$ is the function assigning every $\mathbb{P}^n\in \Sigma$ some $\iota_w(\mathbb{P}^n)\subseteq (A_w)^n$. 

		\item Finally, $\mathbb{H}$ is a function defined on $\{(w,v)\in W^2\mid w\mathrel{\leq}v\}$, which, for every pair $(w,v)$ in its domain returns a (classical) homomorphism $\mathbb{H}_{wv}:\mathfrak{A}_w\to\mathfrak{A}_v$. This function has to satisfy the following additional conditions:
		\begin{itemize}
			\item For all $w \in W$, $\mathbb{H}_{ww} = id[A_w]$.
			
			\item If $w,v,u \in W$ are such that $w\mathrel{\leq}v\mathrel{\leq}u$, then $\mathbb{H}_{wu} = \mathbb{H}_{wv}\circ\mathbb{H}_{vu}$.
		\end{itemize}
	\end{itemize} 
\end{definition}
We use $\mathfrak{S} = (W, \leq, \mathfrak{A}, \mathbb{H})$ as our standard notation for Kripke sheaves; we will assume that any decorations of $\mathfrak{S}$ transfer to its components.

Given a Kripke sheaf $\mathfrak{S}$, and a $w \in W$, we will call an $(\mathfrak{S},w)$-\textit{variable assignment} any function $f:Ind\to D_w$. In order to determine the truth value of a formula, one needs to supply a Kripke sheaf $\mathfrak{S}$, a node $w \in W$ and an $(\mathfrak{S},w)$-variable assignment $f$. With these data, the satisfaction of a formula $\phi\in\mathcal{L}_{fo}$ is given by the relation $\models_{fo}$ which defined by the following induction on the construction of a formula:
\begin{align*}
	\mathfrak{S}, w\models_{fo}\top[f]\\
	\mathfrak{S}, w\not\models_{fo}\bot[f]\\
	\mathfrak{S}, w\models_{fo}\mathbb{P}x[f] &\Leftrightarrow f(x)\in \iota_w(\mathbb{P}) &&\mathbb{P}\in Var\cup \{S, O\}\\
	\mathfrak{S}, w\models_{fo}Exy[f] &\Leftrightarrow (f(x),f(y))\in \iota_w(In)\\
	\mathfrak{S}, w\models_{fo}(x\equiv y)[f] &\Leftrightarrow f(x) = f(y)\\
	\mathfrak{S}, w\models_{fo}Rxyz[f] &\Leftrightarrow (f(x),f(y),f(z))\in \iota_w(R)\\
	\mathfrak{S}, w\models_{fo}(\psi\wedge\chi)[f] &\Leftrightarrow \mathfrak{S}, w\models_{fo}\psi[f]\,\&\,\mathfrak{S}, w\models_{fo}\chi[f]\\
	\mathfrak{S}, w\models_{fo}(\psi\vee\chi)[f] &\Leftrightarrow \mathfrak{S}, w\models_{fo}\psi[f]\text{ or }\mathfrak{S}, w\models_{fo}\chi[f]\\
	\mathfrak{S}, w\models_{fo}(\psi\to\chi)[f] &\Leftrightarrow (\forall v\mathrel{\geq}w)(\mathfrak{S}, v\models_{fo}\psi[f\circ \mathbb{H}_{wv}]\Rightarrow\mathfrak{S}, v\models_{fo}\chi[f\circ \mathbb{H}_{wv}])\\
	\mathfrak{S}, w\models_{fo}(\exists x\psi)[f] &\Leftrightarrow (\exists a\in D_w)(\mathfrak{S}, w\models_{fo}\psi[f[x/a]])\\
	\mathfrak{S}, w\models_{fo}(\forall x\psi)[f] &\Leftrightarrow (\forall v\mathrel{\geq}w)(\forall a\in D_v)(\mathfrak{S}, v\models_{fo}\psi[(f\circ \mathbb{H}_{wv})[x/a]])
\end{align*}
As usual, it follows from this definition that the truth value of a formula $\phi \in \mathcal{L}_{fo}$ only depends on the values assigned by $f$ to the values of the variables in $FV(\phi)$. We will therefore write $\mathfrak{S}, w\models_{fo}\phi[x_1/a_1,\ldots,x_n/a_n]$ iff $\phi \in \mathcal{L}^{\{x_1,\ldots,x_n\}}_{fo}$ and we have $\mathfrak{S}, w\models_{fo}\phi[f]$ for every (equivalently, any) $(\mathfrak{S},w)$-variable assignment $f$ such that $f(x_i) = a_i$ for every $1 \leq i \leq n$. In particular, we will write $\mathfrak{S}, w\models_{fo}\phi$ iff $\phi \in \mathcal{L}^\emptyset_{fo}$ and we have $\mathfrak{S}, w\models_{fo}\phi[f]$ for every (equivalently, any) $(\mathfrak{S},w)$-variable assignment $f$.

It is easy to see that $\models_{fo}$, just like $\models$ before, can be used to introduce the complete set of semantic notions. More precisely, given a Kripke sheaf $\mathfrak{S}$, and a $w \in W$, and a tuple $(a_1,\ldots, a_n) \in A_w$, we will call the triple $(\mathfrak{S},w,(a_1,\ldots, a_n))$ an $n$-\textit{evaluation point}, and, given a pair $(\Gamma, \Delta) \in \mathcal{P}(\mathcal{L}^{\{x_1,\ldots, x_n\}}_{fo})\times\mathcal{P}(\mathcal{L}^{\{x_1,\ldots, x_n\}}_{fo})$, we say that an evaluation point $(\mathfrak{S},w,(a_1,\ldots, a_n))$ \textit{satisfies}  $(\Gamma, \Delta)$ and write $\mathfrak{S},w \models_c(\Gamma, \Delta)[x_1/a_1,\ldots, x_n/a_n]$ iff we have:
$$
(\forall\phi \in \Gamma)(\mathfrak{S},w\models_{fo} \phi[x_1/a_1,\ldots, x_n/a_n])\,\&\,(\forall\psi \in \Delta)(\mathfrak{S},w\not\models_{fo} \psi[x_1/a_1,\ldots, x_n/a_n]).
$$
We say that  $(\Gamma, \Delta)$ is \textit{first-order-satisfiable} iff some $n$-evaluation point satisfies it, and that $\Delta$ \textit{first-order-follows} from $\Gamma$ (and write $\Gamma \models_{fo} \Delta$) iff $(\Gamma, \Delta)$ is first-order-unsatisfiable. We say that $\Gamma$ is first-order-satisfiable iff $(\Gamma, \emptyset)$ is; and if $(\mathfrak{S},w,(a_1,\ldots, a_n))$ first-order-satisfies $(\Gamma, \emptyset)$, then we simply write $\mathfrak{S},w \models_{fo}\Gamma[x_1/a_1,\ldots, x_n/a_n]$. We say that $\phi \in \mathcal{L}$ is \textit{first-order-valid} iff $\emptyset \models_{fo} \phi$, or, in other words, iff $\models_{fo} \phi$.

$\mathsf{FOIL}$ is known to be strongly complete relative to the semantic of Kripke sheaves; in other words, we have $\Gamma \models_{\mathsf{FOIL}} \Delta$ iff $\Gamma \models_{fo} \Delta$ for all $(\Gamma, \Delta) \in \mathcal{P}(\mathcal{L}^{\{x_1,\ldots, x_n\}}_{fo})\times\mathcal{P}(\mathcal{L}^{\{x_1,\ldots, x_n\}}_{fo})$, see \cite[Section 3.6 ff]{gss} for details.\footnote{A lightened version is given in \cite[Cor. 5.3.16]{vandalen}, where the semantics of Kripke sheaves is referred to as `modified Kripke semantics'.} However, in this paper we will be mainly interested in things that first-order-follow from a specific subset $Th\subseteq \mathcal{L}^\emptyset_{fo}$. This subset includes all and only the sentences that follow below:
\begin{align}
	&\forall x(Sx\vee Ox)\label{E:th1}\tag{Th1}\\
	&\forall x\neg(Sx\wedge Ox)\label{E:th2}\tag{Th2}\\
	&\forall x(px\to Ox) &&p\in Var\label{E:th3}\tag{Th3}\\
	&\forall x\forall y(Exy\to (Ox\wedge Sy))\label{E:th4}\tag{Th4}\\
	&\forall x\forall y\forall z(Rxyz\to (Ox\wedge Sy\wedge Oz))\label{E:th5}\tag{Th5}\\
	&\exists x(Sx\wedge \forall y(Eyx\leftrightarrow py)) &&p\in Var\label{E:th6}\tag{Th6}\\
	&\exists x(Sx\wedge (\forall y)_O(Eyx))\label{E:th7}\tag{Th7}\\
	&\exists x(Sx\wedge \forall y\neg Eyx)\label{E:th8}\tag{Th8}\\
	&\forall x\forall y((Sx\wedge Sy)\to\exists z(Sz\wedge (\forall w)_O(Ewz\leftrightarrow (Ewx\ast Ewy)))) &&\ast\in \{\wedge, \vee, \to\}\label{E:th9}\tag{Th9}\\
	&\forall x\forall y((Sx\wedge Sy)\to\exists z(Sz\wedge (\forall w)_O(Ewz\leftrightarrow \forall u(Rwxu\to Euy))))\label{E:th10}\tag{Th10}\\
	&\forall x\forall y((Sx\wedge Sy)\to\exists z(Sz\wedge (\forall w)_O(Ewz\leftrightarrow \exists u(Rwxu\wedge Euy))))\label{E:th11}\tag{Th11}\\
	&\forall x\forall y(Sx\wedge Sy \wedge (\forall z)_O(Ezx\leftrightarrow Ezy) \to x \equiv y)\label{E:th12}\tag{Th12}
\end{align}
It is easy to notice that $Th$ encodes a two-sorted universe, where the sorts are represented by the unary predicates $O$ (for `objects') and $S$ (for `sets'). The predicate $E$ then represents the elementhood. The principles \eqref{E:th6}--\eqref{E:th12} ensure that the sets assigned to the formulas of $\mathcal{L}$ as their extensions by the classical Kripke semantics of $\mathsf{CK}$, are all present in the domain of any model of $Th$.

These clauses can be rather straightforwardly formalized in $\mathsf{FOIL}$ also in the form of the following \textit{standard translation} of $\mathcal{L}$ into the two-sorted first-order language. Given an $x \in Ind$ and a $\phi \in \mathcal{L}$, the formula $ST_x(\phi)\in \mathcal{L}_{fo}$ is called the standard $x$-translation of $\phi$ and is defined by the following induction on the construction of $\phi$ (where $x, y, z,w \in Ind$ are pairwise distinct):
\begin{align*}
	ST_x(p) &:= px \qquad\qquad\qquad\qquad\qquad\qquad\qquad\qquad\qquad p \in Var\\
	ST_x(\phi) &:= \phi\qquad\qquad\qquad\qquad\qquad\qquad\qquad\qquad\qquad \phi \in \{\top, \bot\}\\
	ST_x(\psi\ast\chi) &:= ST_x(\psi)\ast ST_x(\chi)\qquad\qquad\qquad\qquad\qquad\qquad\ast\in \{\wedge,\vee,\to\}\\
	ST_x(\psi\boxto\chi) &:= \exists y(Sy \wedge (\forall z)_O (Ezy\leftrightarrow ST_z(\psi))\wedge\forall w(Rxyw\to ST_w(\chi)))\\
	ST_x(\psi\diamondto\chi) &:= \exists y(Sy \wedge (\forall z)_O (Ezy\leftrightarrow ST_z(\psi))\wedge \exists w(Rxyw\wedge ST_w(\chi)))
\end{align*}
Thus the standard translation simply encodes the (Segerberg version of) Kripke semantics for $\mathsf{CK}$ under thte assumption that the underlying logic of the meta-language is intuitionistic rather than classical. The following lemma is then easily proved by induction on the construction of $\phi\in\mathcal{L}$:
\begin{lemma}\label{L:st}
	For every $x\in Ind$ and every $\phi\in\mathcal{L}$, $ST_x(\phi)\subseteq \mathcal{L}^x_{fo}$.
\end{lemma}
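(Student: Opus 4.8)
The plan is to prove the claim by induction on the construction of $\phi \in \mathcal{L}$, but with one important adjustment: rather than fixing the index variable $x$, I would generalize over it and prove, by induction on $\phi$, that \emph{for every} $x \in Ind$ one has $FV(ST_x(\phi)) \subseteq \{x\}$ (which is what $ST_x(\phi) \subseteq \mathcal{L}^x_{fo}$ amounts to; note that the propositional constants force the inclusion to be read as $\subseteq$ rather than $=$, since $ST_x(\top) = \top$ and $ST_x(\bot) = \bot$ carry no free variables at all). The generalization over $x$ is essential, because the clauses for $\boxto$ and $\diamondto$ do not translate the immediate subformulas at the same index $x$: they invoke $ST_z(\psi)$ and $ST_w(\chi)$ at the auxiliary variables $z$ and $w$, so the induction hypothesis must be available at \emph{all} index variables, not merely at $x$.

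The base cases and the propositional cases are immediate. For $\phi = p \in Var$ we have $ST_x(p) = px$ with $FV = \{x\}$, and for $\phi \in \{\top,\bot\}$ the translation is $\phi$ itself, whose free-variable set is empty. For $\phi = \psi \ast \chi$ with $\ast \in \{\wedge,\vee,\to\}$, the translation is $ST_x(\psi) \ast ST_x(\chi)$, so its free variables are $FV(ST_x(\psi)) \cup FV(ST_x(\chi))$, which by the induction hypothesis applied at the index $x$ to the two immediate subformulas is contained in $\{x\}$.

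The two substantial cases are $\boxto$ and $\diamondto$; I would carry out $\boxto$ in full and then remark that $\diamondto$ is identical up to replacing the innermost $\forall w(Rxyw \to \cdot)$ by $\exists w(Rxyw \wedge \cdot)$. Unfolding the abbreviation $(\forall z)_O\,\alpha = \forall z(Oz \to \alpha)$, the formula $ST_x(\psi \boxto \chi)$ is $\exists y(Sy \wedge \forall z(Oz \to (Ezy \leftrightarrow ST_z(\psi))) \wedge \forall w(Rxyw \to ST_w(\chi)))$. Here I would compute the free-variable set from the inside out, using that $x,y,z,w$ are pairwise distinct (as stipulated in the definition of $ST_x$). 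By the induction hypothesis at the indices $z$ and $w$, $FV(ST_z(\psi)) \subseteq \{z\}$ and $FV(ST_w(\chi)) \subseteq \{w\}$; hence the matrix under $\forall z$ has free variables among $\{y,z\}$, leaving $\{y\}$ once $z$ is bound, while the matrix under $\forall w$ has free variables among $\{x,y,w\}$, leaving $\{x,y\}$ once $w$ is bound. Together with $FV(Sy) = \{y\}$, the matrix under $\exists y$ has free variables among $\{x,y\}$, so after binding $y$ only $x$ can survive, giving $FV(ST_x(\psi\boxto\chi)) \subseteq \{x\}$.

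No step here presents a genuine obstacle; the only thing that requires care is the bookkeeping. The two points I would watch most closely are: first, setting up the induction so that the hypothesis is quantified over the index variable, so that it can be applied at the auxiliary indices $z$ and $w$ in the modal clauses; and second, using the pairwise-distinctness of $x,y,z,w$ to guarantee that the quantifiers $\exists y$, $\forall z$, and $\forall w$ (or $\exists w$) bind exactly the auxiliary variables and never the intended free variable $x$, so that no free occurrence of $x$ is accidentally captured.
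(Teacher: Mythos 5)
Your proof is correct and is precisely the induction on the construction of $\phi$ that the paper invokes without spelling out (the paper only remarks that the lemma ``is then easily proved by induction on the construction of $\phi\in\mathcal{L}$''). The two points you single out for care are exactly the right ones: the induction hypothesis must be quantified over the index variable so it can be applied at the auxiliary indices $z$ and $w$ in the $\boxto$ and $\diamondto$ clauses, and the conclusion must be read as $FV(ST_x(\phi))\subseteq\{x\}$ rather than $FV(ST_x(\phi))=\{x\}$, since $ST_x(\top)=\top$ and $ST_x(\bot)=\bot$ have no free variables.
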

The next couple of lemmas are more tedious, but still rather straightforward to prove:
\begin{lemma}\label{L:th-existence-1}
	For all $\phi, \psi \in \mathcal{L}$ and for all pairwise distinct $x, y,z,w \in Ind$, the following statements hold:
	\begin{align*}
1.\,Th\models_{fo}\forall x(ST_x(\psi\boxto\chi) &\to \forall y(Sy \wedge( \forall z)_O(Ezy\leftrightarrow ST_z(\psi))\to\forall w(Rxyw\to ST_w(\chi)))).
	\end{align*}
\begin{align*}
	2.\,Th\models_{fo}\forall x(ST_x(\psi\diamondto\chi) &\to \forall y(Sy \wedge (\forall z)_O(Ezy\leftrightarrow ST_z(\psi))\to\exists w(Rxyw\wedge ST_w(\chi)))).
\end{align*}
\end{lemma}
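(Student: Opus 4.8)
The plan is to argue semantically over the Kripke sheaves of Definition~\ref{D:sheaf}, exploiting the fact that $ST_x(\psi\boxto\chi)$ merely asserts the \emph{existence} of a set witnessing the $\psi$-extension and carrying the relevant property, whereas the consequent asserts that same property for \emph{every} set witnessing the $\psi$-extension. The two statements coincide because the $\psi$-extension is forced to be unique by the extensionality principle \eqref{E:th12}. Part~2 is handled by the identical argument, only with the inner matrix $\exists w(Rxyw\wedge ST_w(\chi))$ in place of $\forall w(Rxyw\to ST_w(\chi))$, so I would focus on Part~1 and remark at the end that Part~2 closes verbatim.

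First I would fix a Kripke sheaf $\mathfrak{S}$ with $\mathfrak{S}\models_{fo}Th$ (which, since $Th$ consists of sentences, persists to every node), a node $v_0$, and an element $a\in A_{v_0}$, and assume $\mathfrak{S},v_0\models_{fo}ST_x(\psi\boxto\chi)[x/a]$. To establish the consequent at $(v_0,a)$, I unfold the clauses for $\forall$ and $\to$: it suffices to take an arbitrary $v\geq v_0$ and an arbitrary $b\in A_v$ with $\mathfrak{S},v\models_{fo}(Sy\wedge(\forall z)_O(Ezy\leftrightarrow ST_z(\psi)))[x/\mathbb{H}_{v_0v}(a),\,y/b]$, and then to derive $\mathfrak{S},v\models_{fo}\forall w(Rxyw\to ST_w(\chi))[x/\mathbb{H}_{v_0v}(a),\,y/b]$. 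By persistence, the assumed $ST_x(\psi\boxto\chi)$ also holds at $v$ under $x/\mathbb{H}_{v_0v}(a)$; since the existential is witnessed at the same node, I obtain some $b'\in A_v$ satisfying all three conjuncts at $v$, in particular $Sb'$, the biconditional $(\forall z)_O(Ezy\leftrightarrow ST_z(\psi))$ for $y\mapsto b'$, and $\forall w(R\,\mathbb{H}_{v_0v}(a)\,b'\,w\to ST_w(\chi))$.

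The crux is then to show that $b$ and $b'$ are literally the same element of $A_v$. Both the biconditional secured for $b$ and the one secured for $b'$ say, at every $u\geq v$ and every $O$-element $c\in A_u$, that $Ezy$ (with $z\mapsto c$ and $y$ mapped to the transport of $b$, resp.\ of $b'$) holds if and only if $ST_c(\psi)$ does; chaining these equivalences (transitivity of $\leftrightarrow$, intuitionistically valid and hence sound over sheaves) shows that $b$ and $b'$ have at $v$ exactly the same $O$-elements in the sense required by the antecedent of \eqref{E:th12}. Applying \eqref{E:th12} at $v$, valid there by persistence, to $b$ and $b'$ (both satisfying $S$) yields $b\equiv b'$, i.e.\ $b=b'$ as genuine elements of $A_v$, since the semantic clause interprets $\equiv$ by real equality. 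With $b=b'$ in hand, the third conjunct already secured for $b'$ is \emph{verbatim} the goal for $b$, which completes Part~1, and the same equality transfers the witness of $\exists w(Rxyw\wedge ST_w(\chi))$ in Part~2. The one delicate point, and the main thing to get right, is precisely this intuitionistic bookkeeping in the uniqueness step --- passing the two biconditionals to a common upper node, combining them into the extensional equivalence, and licensing \eqref{E:th12} by persistence --- whereas the surrounding unfolding of $ST$ and of the propositional connectives is entirely routine.
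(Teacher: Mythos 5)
Your proof is correct, and its pivot is exactly the paper's pivot: the witness $y'$ supplied by the existential in $ST_x(\psi\boxto\chi)$ and an arbitrary candidate $y$ satisfying $Sy\wedge(\forall z)_O(Ezy\leftrightarrow ST_z(\psi))$ are identified by chaining the two biconditionals and invoking the extensionality axiom \eqref{E:th12}, after which the third conjunct transfers verbatim. Where you differ is the framework in which this is carried out. The paper stays at the level of a semi-formal derivation: it writes down the deduction D1 (premises for $y'$ and $y$, then $(\forall z)_O(Ezy\leftrightarrow Ezy')$, then $y\equiv y'$ by \eqref{E:th12}, then substitution), and discharges the premises using the structural properties \eqref{E:DT}, \eqref{E:Gen}, and \eqref{E:Bern} of $\models_{fo}$ collected in Lemma \ref{L:foil-assumptions}; this never touches the sheaf semantics and is, in effect, a FOIL derivation ready for a fully constructive reading. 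You instead verify the entailment directly in the Kripke-sheaf semantics, which obliges you to do the bookkeeping the paper's route avoids: monotonicity (persistence) of satisfaction along $\leq$ with the transport maps $\mathbb{H}_{v_0v}$, locality of the $\exists$ clause, the unfolding of $(\forall z)_O$ over upper nodes, and the fact that $\equiv$ is interpreted as genuine identity in each domain $A_v$ --- all of which you handle correctly. Your version is arguably more concrete and self-contained (it needs no appeal to the structural rules of Lemma \ref{L:foil-assumptions}), but it is decidedly classical model theory over a specific semantics; the paper's version buys uniformity with the rest of Appendix \ref{A:3} (Proposition \ref{P:easy} splices Lemma \ref{L:th-existence-1} into further deductions in the same style) and sits better with the paper's stated concern that its intuitionistic claims should, as far as possible, rest on intuitionistically meaningful derivations.
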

\begin{lemma}\label{L:th-existence-2}
	For every $\phi\in \mathcal{L}$ and for all distinct $x,y \in Ind$ the following holds:
	$$
	Th\models_{fo}\exists x(Sx\wedge(\forall y)_O(Eyx\leftrightarrow ST_y(\phi))).
	$$
\end{lemma}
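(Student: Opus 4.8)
The plan is to prove the statement by induction on the construction of $\phi$, working inside an arbitrary Kripke sheaf $\mathfrak{S}$ satisfying $Th$ at an arbitrary node $w$ and exhibiting an element of $A_w$ that serves as the extension of $\phi$. In each case the witnessing set is produced by the appropriate ``comprehension'' axiom among \eqref{E:th6}--\eqref{E:th11}, and the inductive hypothesis is transported across $R$ by means of the typing axioms \eqref{E:th4}--\eqref{E:th5}. For the atomic cases: when $\phi = p \in Var$ the claim is literally \eqref{E:th6}; when $\phi = \top$, the biconditional $Eyx \leftrightarrow ST_y(\top) = (Eyx \leftrightarrow \top)$ collapses to $Eyx$, so \eqref{E:th7} suffices; and when $\phi = \bot$, the set furnished by \eqref{E:th8} satisfies $\forall y\,\neg Eyx$, which intuitionistically gives $(\forall y)_O(Eyx \leftrightarrow \bot)$ since $Eyx \leftrightarrow \bot$ is equivalent to $\neg Eyx$.

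For the propositional step $\phi = \psi \ast \chi$ with $\ast \in \{\wedge, \vee, \to\}$, the inductive hypothesis yields sets $x_\psi$ and $x_\chi$ whose object-members are exactly the objects satisfying $ST_z(\psi)$ and $ST_z(\chi)$. Feeding $x_\psi, x_\chi$ into \eqref{E:th9} produces a set $z$ with $(\forall w)_O(Ewz \leftrightarrow (Ewx_\psi \ast Ewx_\chi))$. Since for objects $w$ we have $Ewx_\psi \leftrightarrow ST_w(\psi)$ and $Ewx_\chi \leftrightarrow ST_w(\chi)$, and since each $\ast$ respects intuitionistic equivalence of its arguments, this rewrites to $(\forall w)_O(Ewz \leftrightarrow ST_w(\psi \ast \chi))$, using that $ST_w(\psi \ast \chi) = ST_w(\psi) \ast ST_w(\chi)$.

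The modal steps are the substantive part, and they are exactly what Lemma \ref{L:th-existence-1} was set up to support. Take $\phi = \psi \boxto \chi$; the $\diamondto$ case is parallel. Fixing $x_\psi, x_\chi$ as above and applying \eqref{E:th10} to them yields a set $z$ with $(\forall w)_O(Ewz \leftrightarrow \forall u(Rwx_\psi u \to Eux_\chi))$, so it remains to show, for an arbitrary object $w$, that $\forall u(Rwx_\psi u \to Eux_\chi)$ is equivalent to $ST_w(\psi \boxto \chi)$. For the direction from $ST_w(\psi \boxto \chi)$, I would instantiate Lemma \ref{L:th-existence-1}.1 with the set $x_\psi$ (which satisfies $Sx_\psi$ and $(\forall z)_O(Ezx_\psi \leftrightarrow ST_z(\psi))$ by the inductive hypothesis), obtaining $\forall u(Rwx_\psi u \to ST_u(\chi))$; as \eqref{E:th5} forces any such $u$ to be an object, the inductive hypothesis converts $ST_u(\chi)$ into $Eux_\chi$. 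For the converse one takes $x_\psi$ itself as the existential witness in $ST_w(\psi \boxto \chi)$ and again uses \eqref{E:th5} with the inductive hypothesis. The case $\psi \diamondto \chi$ runs identically, with \eqref{E:th11} replacing \eqref{E:th10} and Lemma \ref{L:th-existence-1}.2 replacing part~1.

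I expect the main obstacle to be keeping the meta-reasoning in the modal cases intuitionistically honest when unpacking the existential over the set witnessing $\psi$: the decisive device is precisely Lemma \ref{L:th-existence-1}, which lets one decode $ST_w(\psi \boxto \chi)$ against the canonical $x_\psi$ supplied by the inductive hypothesis rather than against the anonymous existentially quantified set, thereby avoiding any appeal to extensionality \eqref{E:th12} here. A secondary, purely routine concern is the management of fresh variables and of the distinctness hypotheses on $x, y, z, w$ so that the substitutions in the standard translation and in the axioms of $Th$ cause no clashes.
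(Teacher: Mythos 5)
Your proposal is correct and follows essentially the same route as the paper: induction on $\phi$, with \eqref{E:th6}--\eqref{E:th8} handling the atoms, \eqref{E:th9} handling the propositional connectives, and \eqref{E:th10}/\eqref{E:th11} plus a decoding of the existential quantifier in $ST_x(\psi\boxto\chi)$ against the canonical witness supplied by the inductive hypothesis in the modal cases. The only (organizational) difference is that you cite Lemma \ref{L:th-existence-1} for that decoding step, whereas the paper re-derives its content inline from the extensionality axiom \eqref{E:th12}; accordingly, your remark that this ``avoids any appeal to extensionality'' is true only locally, since the proof of Lemma \ref{L:th-existence-1} itself rests on \eqref{E:th12}.
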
 
These lemmas provide a stepping stone for our first result on the relations between $Th$ and $\mathsf{IntCK}$:
\begin{proposition}\label{P:easy}
	For every $\phi\in\mathcal{L}$ and every $x \in Ind$, if $\phi\in\mathsf{IntCK}$ then $Th\models_{fo}\forall x(ST_x(\phi))$.
\end{proposition}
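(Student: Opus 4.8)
The plan is to invoke Theorem \ref{T:completeness} to replace membership in $\mathsf{IntCK}$ by provability in $\mathbb{ICK}$, and then to argue by induction on the length of a proof of $\phi$ in $\mathbb{ICK}$ that $Th\models_{fo}\forall x(ST_x(\phi))$. Throughout I would fix an arbitrary evaluation point $(\mathfrak{S},w)$ with $\mathfrak{S},w\models_{fo}Th$ and reason semantically. The one structural fact I would record at the outset is that, since $Th\subseteq\mathcal{L}^\emptyset_{fo}$ is a set of \emph{sentences} and $\models_{fo}$ is persistent (its clauses for $\to$ and $\forall$ quantify over all $v\geq w$), $\mathfrak{S},w\models_{fo}Th$ entails $\mathfrak{S},v\models_{fo}Th$ for every $v\geq w$; hence all the facts furnished by Lemmas \ref{L:th-existence-1} and \ref{L:th-existence-2} are available not just at $w$ but at every node above it. By Lemma \ref{L:st} every $ST_x(\phi)$ lies in $\mathcal{L}^x_{fo}$, so $\forall x(ST_x(\phi))$ is a sentence and the induction is well-posed.

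For the base of the induction I would treat the axioms. Since $ST_x$ commutes with $\wedge$, $\vee$ and $\to$, every instance of \eqref{E:a0} is sent to a substitution instance of an intuitionistic first-order validity, which holds in \emph{every} Kripke sheaf and so is a fortiori a $Th$-consequence. For the genuinely conditional axioms \eqref{E:a1}--\eqref{E:a6} the workhorses are Lemmas \ref{L:th-existence-1} and \ref{L:th-existence-2}: the former lets me pass from the existential form of $ST_x(\psi\boxto\chi)$ and $ST_x(\psi\diamondto\chi)$ to their ``for all antecedent-sets $y$'' form, while the latter guarantees that at least one such $y$ exists; together with the extensionality clause \eqref{E:th12} these show that the choice of the witnessing set is immaterial. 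Each axiom then collapses to a routine intuitionistic manipulation of the matrices $\forall w(Rxyw\to ST_w(\chi))$ and $\exists w(Rxyw\wedge ST_w(\chi))$: for instance \eqref{E:a1} reflects the distribution of $\forall w(Rxyw\to(-))$ over conjunction, \eqref{E:a3} the distribution of $\exists w(Rxyw\wedge(-))$ over disjunction, \eqref{E:a5} the facts that $ST_w(\top)=\top$ and (by Lemma \ref{L:th-existence-2}) the antecedent-set exists, and \eqref{E:a6} the fact that $ST_w(\bot)=\bot$ makes $\exists w(Rxyw\wedge\bot)$ refutable.

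For the inductive step I would handle the rules. The case of \eqref{E:mp} is immediate: since $ST_x(\psi\to\chi)=ST_x(\psi)\to ST_x(\chi)$, the two induction hypotheses give, at every $v\geq w$ and every element $a$ of the domain $A_v$, both $ST_x(\psi)[x/a]$ and its consequence $ST_x(\chi)[x/a]$, which is exactly $\mathfrak{S},w\models_{fo}\forall x(ST_x(\chi))$. For the congruence rules \eqref{E:RAbox}, \eqref{E:RCbox}, \eqref{E:RAdiam} and \eqref{E:RCdiam} the key observation is a replacement principle: if $Th\models_{fo}\forall x(ST_x(\phi)\leftrightarrow ST_x(\psi))$, then, because the equivalence holds at all nodes and for every value of the single free variable, one may substitute $ST_z(\psi)$ for $ST_z(\phi)$ inside the defining clause $(\forall z)_O(Ezy\leftrightarrow ST_z(-))$ of the antecedent-set (for \eqref{E:RAbox} and \eqref{E:RAdiam}), or substitute in the consequent matrix (for \eqref{E:RCbox} and \eqref{E:RCdiam}), without changing the truth value of the whole translation.

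I expect the main obstacle to be the conditional axioms \eqref{E:a2}--\eqref{E:a4}, and especially \eqref{E:a4}, where $\boxto$ and $\diamondto$ interact. Here one cannot reason truth-functionally at a single node, because the embedded implications are interpreted intuitionistically across all larger nodes; the argument has to thread the existence and extensional uniqueness of the antecedent-set through this quantification while respecting persistence, and it is exactly Lemmas \ref{L:th-existence-1} and \ref{L:th-existence-2} (underwritten by \eqref{E:th6}--\eqref{E:th12}) that make this possible. The replacement step for the (RA) rules is the other delicate point, since it requires the equivalence of the antecedents to hold uniformly in the bound variable $z$ rather than merely at the evaluation point; the persistence remark from the first paragraph is what licenses this.
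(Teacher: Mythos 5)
Your proposal is correct and follows essentially the same route as the paper's proof in Appendix \ref{A:3}: reduce membership in $\mathsf{IntCK}$ to $\mathbb{ICK}$-provability via Theorem \ref{T:completeness}, then induct on the proof, checking that translations of axioms first-order-follow from $Th$ (instances of \eqref{E:a0} and \eqref{E:a6} being outright $\mathsf{FOIL}$-valid, \eqref{E:a5} and \eqref{E:a1}--\eqref{E:a4} resting on Lemmas \ref{L:th-existence-1} and \ref{L:th-existence-2} together with the extensionality axiom \eqref{E:th12}) and that \eqref{E:mp} and the four congruence rules preserve this property. The only difference is presentational: you argue model-theoretically via persistence at an arbitrary node satisfying $Th$, whereas the paper runs semi-formal deductions using \eqref{E:DT}, \eqref{E:Gen}, and \eqref{E:Bern}, but these are interchangeable formulations of the same argument.
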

The proofs of Lemmas \ref{L:th-existence-1}, \ref{L:th-existence-2}, and of Proposition \ref{P:easy} can be found in Appendix \ref{A:3}.

Using the compactness of $\mathsf{FOIL}$, we are now in a position prove one direction of the main result for the present section:
\begin{corollary}\label{C:easy}
	For all $\Gamma, \Delta \subseteq \mathcal{L}$ and for every $x \in Ind$, if $\Gamma \models_{\mathsf{IntCK}} \Delta$, then $Th, \{ST_x(\phi)\mid \phi \in \Gamma\}\models_{fo}\{ST_x(\psi)\mid \psi \in \Delta\}$.
\end{corollary}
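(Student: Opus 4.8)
The plan is to reduce the (possibly infinite) consequence statement to a single $\mathsf{IntCK}$-theorem, push that theorem through $ST_x$ using the fact that the translation commutes with the propositional connectives, and then read off the first-order consequence directly from the Kripke-sheaf semantics.

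First I would unpack the hypothesis. Since $\Gamma \models_{\mathsf{IntCK}} \Delta$ means that $(\Gamma, \Delta)$ is unsatisfiable, Theorem \ref{T:completeness} tells us that $(\Gamma, \Delta)$ is inconsistent, and Lemma \ref{L:alt-consistency} then produces finite lists $\phi_1, \ldots, \phi_n \in \Gamma$ and $\psi_1, \ldots, \psi_m \in \Delta$ with $\vdash \bigwedge_{i=1}^n \phi_i \to \bigvee_{j=1}^m \psi_j$; equivalently, by Theorem \ref{T:completeness}, $(\bigwedge_{i=1}^n \phi_i \to \bigvee_{j=1}^m \psi_j) \in \mathsf{IntCK}$. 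This is where the finitary character of derivability does its work, cutting the infinite sets down to finitely many formulas.

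Next I would apply Proposition \ref{P:easy} to this single theorem, obtaining $Th \models_{fo} \forall x\, ST_x(\bigwedge_i \phi_i \to \bigvee_j \psi_j)$. Because the clauses defining $ST_x$ on $\wedge$, $\vee$, $\to$ make it commute with the propositional connectives (and respect the conventions $\bigwedge \emptyset = \top$, $\bigvee \emptyset = \bot$ together with $ST_x(\top) = \top$, $ST_x(\bot) = \bot$), the translated formula is $\bigwedge_{i=1}^n ST_x(\phi_i) \to \bigvee_{j=1}^m ST_x(\psi_j)$, so
\[
Th \models_{fo} \forall x\Bigl(\bigwedge_{i=1}^n ST_x(\phi_i) \to \bigvee_{j=1}^m ST_x(\psi_j)\Bigr).
\]
Now take any $1$-evaluation point $(\mathfrak{S}, w, a)$ that satisfies $Th$ together with every $ST_x(\phi)$, $\phi \in \Gamma$. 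Since it satisfies all of $Th$, it satisfies the displayed sentence; instantiating the outer $\forall x$ at the node $w$ itself with the element $a$ (using $\mathbb{H}_{ww} = id[A_w]$ and reflexivity of $\leq$) and then reading the implication at $v = w$ yields $\mathfrak{S}, w \models_{fo}(\bigvee_j ST_x(\psi_j))[x/a]$, because the point already satisfies each $ST_x(\phi_i)[x/a]$. By the clause for $\vee$, some $ST_x(\psi_j)[x/a]$ holds with $\psi_j \in \Delta$, contradicting the assumption that the point refutes every member of $\{ST_x(\psi)\mid \psi \in \Delta\}$. Hence no such point exists, which is exactly $Th, \{ST_x(\phi)\mid\phi\in\Gamma\}\models_{fo}\{ST_x(\psi)\mid\psi\in\Delta\}$.

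The one point that needs care — and the only genuine obstacle — is bookkeeping around the evaluation-point semantics: the members of $Th$ are sentences while the $ST_x(\phi)$ lie in $\mathcal{L}^x_{fo}$, so the combined bi-set mixes $\mathcal{L}^\emptyset_{fo}$ and $\mathcal{L}^x_{fo}$. I would note explicitly that we evaluate it over $1$-evaluation points, with the sentences of $Th$ being assignment-independent, so that a single pair $(\mathfrak{S}, w)$ with distinguished element $a$ simultaneously witnesses $Th$ (regardless of $x$) and the $x$-indexed translations (at $x \mapsto a$). The essential semantic move — that intuitionistic $\forall$ and $\to$ may be instantiated at the very node $w$ by reflexivity and $\mathbb{H}_{ww} = id[A_w]$ — is what lets the universally quantified, implication-shaped translated theorem collapse to the desired local consequence; compactness of $\mathsf{FOIL}$ is then needed only if one prefers to route this last step through $\mathsf{FOIL}$-derivability (via $\forall$-elimination and the deduction theorem) rather than arguing semantically as above.
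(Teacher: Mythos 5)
Your proof is correct and follows essentially the same route as the paper's: reduce the hypothesis to a single finite implication theorem of $\mathsf{IntCK}$ via Theorem \ref{T:completeness} and Lemma \ref{L:alt-consistency}, push that theorem through Proposition \ref{P:easy}, use that $ST_x$ commutes with the propositional connectives, and unpack the resulting universally quantified implication into the desired first-order consequence. The only cosmetic differences are that the paper routes the finitization through Corollary \ref{C:compactness} (whose proof rests on the very lemmas you invoke directly), and that it compresses your careful evaluation-point argument for the last step into a ``whence clearly''.
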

\begin{proof}
	If $\Gamma \models_{\mathsf{IntCK}} \Delta$ then, by Theorem \ref{T:completeness}, $(\Gamma, \Delta)$ must be unsatisfiable, whence, by Corollary \ref{C:compactness}, there must exist some $\Gamma'\Subset\Gamma$ and some $\Delta'\Subset\Delta$ such that $(\Gamma', \Delta')$ is unsatisfiable. But then, again, by Theorem \ref{T:completeness} and Lemma \ref{L:alt-consistency}, we must have $\bigwedge\Gamma'\to\bigvee\Delta'\in \mathsf{IntCK}$. Proposition \ref{P:easy} now implies that $Th\models_{fo} \forall xST_x(\bigwedge\Gamma'\to\bigvee\Delta')$, whence clearly $Th\models_{fo} \forall x(\bigwedge\{ST_x(\phi)\mid \phi \in \Gamma'\}\to\bigvee\{ST_x(\psi)\mid \psi \in \Delta\})$, and, furthermore:
	$Th, \{ST_x(\phi)\mid \phi \in \Gamma'\}\models_{fo}\{ST_x(\psi)\mid \psi \in \Delta'\}$.
	
	But then, trivially, also $Th, \{ST_x(\phi)\mid \phi \in \Gamma\}\models_{fo}\{ST_x(\psi)\mid \psi \in \Delta\}$ must hold. 
\end{proof} 
\begin{example}\label{example}
	To illustrate the import of Proposition \ref{P:easy}, let us consider the intuitionistic meaning of the formula $\phi:= \neg\neg(\top\boxto\bot)\to(\top\boxto\bot)$, whose status as a part of basic intuitionistic logic of conditional is, as we saw in Section \ref{sub:intense}, disputed between $\mathsf{ICK}$ and $\mathsf{IntCK}$. Setting $\psi: = Sy\wedge(\forall z)_O(Ezy)$, we see that we must have $
	ST_x(\phi) = \neg\neg\exists y(\psi\wedge\forall w\neg Rxyw)\to\exists y(\psi\wedge\forall w\neg Rxyw)$.
	By \eqref{E:th7}, we know that $Th \models_{fo}\exists y\psi$, so the whole formula is reduced, modulo $Th$, to the following theorem of $\mathsf{FOIL}$: $
	\neg\neg\forall w\neg Rxyw\to \forall w\neg Rxyw$.
	It is clear now that the case of double negation elimination claimed in $\phi$ is intuitionistically acceptable and that $\phi$ indeed must be accepted as a theorem of basic intuitionistic conditional logic.
\end{example}
We now want to prove the converse to Corollary \ref{C:easy}; to this end, we want to throw a more attentive look at the canonical model $\mathcal{M}_c$ given in Definition \ref{D:canonical-model}. We adopt the following further definitions:
\begin{definition}\label{D:standard-sequence}
	For any $n \in \omega$, a sequence $\alpha = ((\Gamma_0,\Delta_0),\phi_1,\ldots,\phi_n, (\Gamma_n, \Delta_n))$ is called a $(\Gamma_0,\Delta_0)$-standard sequence of length $n + 1$ iff $(\Gamma_0,\Delta_0),\ldots,(\Gamma_n, \Delta_n)\in W_c$, $\phi_1,\ldots,\phi_n \in \mathcal{L}$, and we have:
	$$
	(\forall i < n)(R_c((\Gamma_i,\Delta_i),\|\phi_{i + 1}\|_{\mathcal{M}_c}, (\Gamma_{i+1}, \Delta_{i+1}))).
	$$
Given a $(\Gamma, \Delta)\in W_c$, the set of all $(\Gamma,\Delta)$-standard sequences will be denoted by $Seq(\Gamma,\Delta)$. The set $Seq$ of all standard sequences is then given by $
Seq := \bigcup\{Seq(\Gamma,\Delta)\mid (\Gamma, \Delta)\in W_c\}$.

Finally, given a $\beta = ((\Xi_0,\Theta_0),\psi_1,\ldots,\psi_m, (\Xi_m, \Theta_m))\in Seq$, we say that $\beta$ extends $\alpha$ and will write $
\alpha \mathrel{\prec}\beta$ 
iff (1) $m = n$, (2) $\phi_1 = \psi_1,\ldots, \phi_n = \psi_n (= \psi_m)$, and

\noindent(3) $(\Gamma_0,\Delta_0)\leq_c (\Xi_0,\Theta_0),\ldots,  (\Gamma_n,\Delta_n)\leq_c (\Xi_n,\Theta_n)(=  (\Xi_m,\Theta_m))$. 
\end{definition}
We will need the following facts about standard sequences:
\begin{lemma}\label{L:standard-sequence}
The following statements hold:
\begin{enumerate}
	\item If $\alpha, \beta \in Seq$ are such that $\alpha\mathrel{\prec}\beta$, and $\phi \in \mathcal{L}$, $(\Gamma, \Delta) \in W_c$ are such that $\alpha' = \alpha^\frown(\phi, (\Gamma, \Delta)) \in Seq$, then there exists a $\beta' = \beta^\frown(\phi, (\Xi, \Theta)) \in Seq$ and $\alpha'\mathrel{\prec}\beta'$. 
	
	\item For all $(\Gamma,\Delta), (\Xi,\Theta) \in W_c$ such that $(\Gamma,\Delta)\leq_c (\Xi,\Theta)$, and every $\alpha \in Seq(\Gamma,\Delta)$, there exists a $\beta \in Seq(\Xi, \Theta)$ such that $\alpha\mathrel{\prec}\beta$.
	\item For all $(\Gamma,\Delta), (\Xi,\Theta) \in W_c$ and $\alpha \in Seq(\Gamma,\Delta)$ such that $end(\alpha)\leq_c(\Xi,\Theta)$, there exists a $\beta \in Seq$ such that $\alpha\mathrel{\prec}\beta$ and $end(\beta) = (\Xi,\Theta)$. 
\end{enumerate} 	
\end{lemma}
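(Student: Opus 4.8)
The plan is to prove the three parts in the stated order, building Part 2 out of Part 1 and treating Part 3 by a dual construction; all three amount to one application per step of the commuting-square conditions \eqref{Cond:1}, \eqref{Cond:2} established for $\mathcal{M}_c$ in Lemma \ref{L:canonical-model}, wrapped in the bookkeeping that the relation $\prec$ demands (equal length, identical formula labels, and pointwise $\leq_c$). For Part 1, I would write $\alpha = ((\Gamma_0,\Delta_0),\phi_1,\ldots,\phi_n,(\Gamma_n,\Delta_n))$ and $\beta = ((\Xi_0,\Theta_0),\phi_1,\ldots,\phi_n,(\Xi_n,\Theta_n))$, so that $\alpha\prec\beta$ gives $(\Gamma_n,\Delta_n)\leq_c(\Xi_n,\Theta_n)$, while $\alpha'=\alpha^\frown(\phi,(\Gamma,\Delta))\in Seq$ gives $(\Gamma_n,\Delta_n)\,(R_c)_{\|\phi\|_{\mathcal{M}_c}}\,(\Gamma,\Delta)$. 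Together these read $(\Xi_n,\Theta_n)\geq_c(\Gamma_n,\Delta_n)\,(R_c)_{\|\phi\|_{\mathcal{M}_c}}\,(\Gamma,\Delta)$, which is precisely the hypothesis of \eqref{Cond:1}. Applying \eqref{Cond:1} yields a $(\Xi,\Theta)\in W_c$ with $(\Xi_n,\Theta_n)\,(R_c)_{\|\phi\|_{\mathcal{M}_c}}\,(\Xi,\Theta)$ and $(\Gamma,\Delta)\leq_c(\Xi,\Theta)$; then $\beta'=\beta^\frown(\phi,(\Xi,\Theta))$ is a standard sequence extending $\alpha'$, since the earlier pairs inherit $\leq_c$ from $\alpha\prec\beta$ and the new final pair satisfies $(\Gamma,\Delta)\leq_c(\Xi,\Theta)$.

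For Part 2, I would induct on the length of $\alpha$. The base case (length one) is immediate: $\beta=((\Xi,\Theta))$ works, as $(\Gamma,\Delta)\leq_c(\Xi,\Theta)$ is the hypothesis. For the step, split $\alpha=\alpha_0^\frown(\phi,(\Gamma',\Delta'))$, apply the induction hypothesis to the prefix $\alpha_0\in Seq(\Gamma,\Delta)$ to obtain $\beta_0\in Seq(\Xi,\Theta)$ with $\alpha_0\prec\beta_0$, and invoke Part 1 to extend $\beta_0$ by the label $\phi$ to some $\beta$ with $\alpha\prec\beta$; since $\beta$ still begins at $(\Xi,\Theta)$, it lies in $Seq(\Xi,\Theta)$.

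For Part 3, I would again induct on the length of $\alpha$, but now the prescribed endpoint is propagated backwards using the dual condition \eqref{Cond:2}. The base case is trivial. For the step, write $\alpha=\alpha_0^\frown(\phi_n,(\Gamma_n,\Delta_n))$, where $\alpha_0$ ends at $(\Gamma_{n-1},\Delta_{n-1})$ with $(\Gamma_{n-1},\Delta_{n-1})\,(R_c)_{\|\phi_n\|_{\mathcal{M}_c}}\,(\Gamma_n,\Delta_n)$; the hypothesis $end(\alpha)=(\Gamma_n,\Delta_n)\leq_c(\Xi,\Theta)$ puts us in the situation $(\Gamma_{n-1},\Delta_{n-1})\,(R_c)_{\|\phi_n\|_{\mathcal{M}_c}}\,(\Gamma_n,\Delta_n)\leq_c(\Xi,\Theta)$, which is the hypothesis of \eqref{Cond:2}. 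Applying \eqref{Cond:2} produces a $(\Xi',\Theta')$ with $(\Gamma_{n-1},\Delta_{n-1})\leq_c(\Xi',\Theta')$ and $(\Xi',\Theta')\,(R_c)_{\|\phi_n\|_{\mathcal{M}_c}}\,(\Xi,\Theta)$. Since $end(\alpha_0)\leq_c(\Xi',\Theta')$, the induction hypothesis applied to $\alpha_0$ and $(\Xi',\Theta')$ yields a $\beta_0$ with $\alpha_0\prec\beta_0$ and $end(\beta_0)=(\Xi',\Theta')$; appending $(\phi_n,(\Xi,\Theta))$ to $\beta_0$ gives the required $\beta$ with $\alpha\prec\beta$ and $end(\beta)=(\Xi,\Theta)$.

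The only place where anything can genuinely go wrong is the orientation of the two commuting-square conditions: Part 1 builds \emph{forward} from a raised initial segment and so needs \eqref{Cond:1}, whereas Part 3 is forced to reach a \emph{prescribed} endpoint and must therefore propagate \emph{backward}, which is exactly what \eqref{Cond:2} supplies. Once each direction is matched to its correct condition, every remaining obligation is the routine check that the three defining clauses of $\prec$ are preserved under the one-step extensions.
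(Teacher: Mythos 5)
Your proposal is correct and follows essentially the same route as the paper's proof: Part 1 is a single application of \eqref{Cond:1} to the last link, Part 2 is induction on length combining the IH with Part 1, and Part 3 is induction on length driven by \eqref{Cond:2}. The only visible difference is that in Part 3 you peel off the \emph{last} link of $\alpha$ and apply \eqref{Cond:2} before invoking the induction hypothesis (recursing with the new intermediate endpoint $(\Xi',\Theta')$), whereas the paper peels off the \emph{first} link and applies \eqref{Cond:2} after the induction hypothesis has raised the tail; these are mirror-image organizations of the same backward propagation and perform identical work.
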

\begin{proof}
	(Part 1) Assume the hypothesis. Since $\alpha\mathrel{\prec}\beta$, we must have $end(\alpha)\leq_c end(\beta)$, and, since $\alpha' = \alpha^\frown(\phi, (\Gamma, \Delta)) \in Seq$, we must have $R_c(end(\alpha),\|\phi\|_{\mathcal{M}_c}, (\Gamma, \Delta))$. Since $\mathcal{M}_c$ satisfies condition \eqref{Cond:1} of Definition \ref{D:chellas-model}, there must exist some $(\Xi, \Theta)\in W_c$ such that both $(\Gamma, \Delta)\mathrel{\leq_c}(\Xi, \Theta)$ and $R_c(end(\beta),\|\phi\|_{\mathcal{M}_c}, (\Xi, \Theta))$. The latter means that we have both $\beta' = \beta^\frown(\phi, (\Xi, \Theta)) \in Seq$ and $\alpha'\mathrel{\prec}\beta'$.
	
	(Part 2) By induction on the length of $\alpha$. If $\alpha$ has length $1$, then we can set $\beta:= (\Xi,\Theta)$. If $\alpha$ has length $k + 1$ for some $1 \leq k < \omega$, then we apply IH and Part 1.
	
	(Part 3) Again, we proceed by induction on the length of $\alpha$. If $\alpha$ has length $1$, then we can set $\beta:= (\Xi,\Theta)$. If $\alpha$ has length $k + 1$ for some $1 \leq k < \omega$, then for some $\alpha' \in Seq$ of length $k$ and some $\phi \in \mathcal{L}$, we must have $\alpha = ((\Gamma, \Delta), \phi)^\frown(\alpha')$, with $end(\alpha) = end(\alpha')$. Applying now IH to $\alpha'$, we find some $\beta' \in Seq$ such that $end(\beta') = (\Xi,\Theta)$ and $\alpha'\mathrel{\prec}\beta'$. But then, in particular, we must have $init(\alpha') \leq_c init(\beta')$. Moreover, since $\alpha = ((\Gamma, \Delta), \phi)^\frown(\alpha') \in Seq$, we must also have $R_c((\Gamma, \Delta),\|\phi\|_{\mathcal{M}_c}, init(\alpha'))$. But then, since $\mathcal{M}_c$ satisfies condition \eqref{Cond:2} of Definition \ref{D:chellas-model}, there must exist some $(\Gamma', \Delta')\in W_c$ such that both $(\Gamma, \Delta)\mathrel{\leq_c}(\Gamma', \Delta')$ and $R_c((\Gamma', \Delta'),\|\phi\|_{\mathcal{M}_c},init(\beta'))$. But then, for $\beta := ((\Gamma', \Delta'), \phi)^\frown(\alpha')$ we must have $\beta \in Seq$, $end(\beta) = end(\beta') = (\Xi,\Theta)$, and  $\alpha\mathrel{\prec}\beta$.  
\end{proof}
Given any $(\Gamma, \Delta), (\Xi, \Theta)\in W_c$ such that $(\Gamma,\Delta)\leq_c (\Xi,\Theta)$, a function $f:Seq(\Gamma,\Delta)\to Seq(\Xi,\Theta)$ is called a \textit{local} $((\Gamma,\Delta),(\Xi,\Theta))$-\textit{choice function} iff $(\forall \alpha \in Seq(\Gamma,\Delta))(\alpha\mathrel{\prec}f(\alpha))$. The set of all local $((\Gamma,\Delta),(\Xi,\Theta))$-choice functions will be denoted by $\mathfrak{F}((\Gamma,\Delta),(\Xi,\Theta))$. The following Lemma sums up some facts about local choice functions:
\begin{lemma}\label{L:choice-functions}
	Let  $(\Gamma, \Delta) \in W_c$. Then the following statements hold:
	\begin{enumerate}
		\item For every $(\Xi, \Theta)\in W_c$ such that $(\Gamma,\Delta)\leq_c (\Xi,\Theta)$, we have $\mathfrak{F}((\Gamma,\Delta),(\Xi,\Theta))\neq \emptyset$.
		
		\item For every $\alpha \in Seq(\Gamma,\Delta)$ and every $(\Xi, \Theta)\in W_c$ such that $end(\alpha) \leq_c (\Xi, \Theta)$, there exist a $\beta \in Seq$ such that $end(\beta) = (\Xi, \Theta)$, and an $f\in\mathfrak{F}((\Gamma,\Delta),init(\beta))$ such that $f(\alpha) = \beta$.
		
		\item For every $(\Xi, \Theta)\in W_c$, $id[Seq(\Xi,\Theta)]\in \mathfrak{F}((\Xi,\Theta),(\Xi,\Theta))$.
		
		\item Given a $n \in \omega$, any $(\Gamma_0,\Delta_0),\ldots,(\Gamma_n, \Delta_n)\in W_c$ such that $(\Gamma_0,\Delta_0)\leq_c\ldots\leq_c(\Gamma_n, \Delta_n)$, and any $f_1,\ldots,f_n$ such that for every $i < n$ we have $f_{i + 1}\in \mathfrak{F}((\Gamma_i, \Delta_i),(\Gamma_{i + 1}, \Delta_{i + 1}))$, we also have that $f_1\circ\ldots\circ f_n \in \mathfrak{F}((\Gamma_0, \Delta_0),(\Gamma_{n}, \Delta_{n}))$.
	\end{enumerate} 
\end{lemma}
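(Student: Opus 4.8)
The plan is to reduce all four parts to two structural facts about the extension relation $\prec$ on $Seq$ together with the existence results already packaged in Lemma \ref{L:standard-sequence}. Before touching the four items, I would record the key observation that $\prec$ is a pre-order on $Seq$: it is \emph{reflexive}, because for any $\alpha = ((\Gamma_0,\Delta_0),\phi_1,\ldots,\phi_n,(\Gamma_n,\Delta_n))$ clause (3) of Definition \ref{D:standard-sequence} reduces to $(\Gamma_i,\Delta_i)\leq_c(\Gamma_i,\Delta_i)$ for each $i$, which holds by reflexivity of $\leq_c$; and it is \emph{transitive}, because clauses (1)--(2) are plainly transitive and clause (3) is transitive since $\leq_c$ is. These two facts are exactly what Parts 3 and 4 need.

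For Part 1, given $(\Gamma,\Delta)\leq_c(\Xi,\Theta)$, Lemma \ref{L:standard-sequence}.2 supplies, for each $\alpha\in Seq(\Gamma,\Delta)$, at least one $\beta\in Seq(\Xi,\Theta)$ with $\alpha\prec\beta$. Since the metatheory is classical, I would invoke the axiom of choice to pick one such $\beta$ for every $\alpha$ and let $f(\alpha)$ be this choice; the resulting $f:Seq(\Gamma,\Delta)\to Seq(\Xi,\Theta)$ is then a local choice function by construction, so $\mathfrak{F}((\Gamma,\Delta),(\Xi,\Theta))\neq\emptyset$. Part 3 is immediate from reflexivity of $\prec$: the identity map on $Seq(\Xi,\Theta)$ sends each $\alpha$ to $\alpha$, and $\alpha\prec\alpha$ together with $(\Xi,\Theta)\leq_c(\Xi,\Theta)$ gives $id[Seq(\Xi,\Theta)]\in\mathfrak{F}((\Xi,\Theta),(\Xi,\Theta))$.

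For Part 2, I would first apply Lemma \ref{L:standard-sequence}.3 to the given $\alpha$ and $(\Xi,\Theta)$ (licensed by $end(\alpha)\leq_c(\Xi,\Theta)$) to obtain a $\beta\in Seq$ with $\alpha\prec\beta$ and $end(\beta)=(\Xi,\Theta)$. From $\alpha\prec\beta$ and $init(\alpha)=(\Gamma,\Delta)$ one reads off $(\Gamma,\Delta)\leq_c init(\beta)$, so $\mathfrak{F}((\Gamma,\Delta),init(\beta))$ is the correct target set. I would then define the required $f$ by setting $f(\alpha):=\beta$ and, for every other $\gamma\in Seq(\Gamma,\Delta)$, choosing (as in Part 1, via Lemma \ref{L:standard-sequence}.2) some $\delta\in Seq(init(\beta))$ with $\gamma\prec\delta$ and putting $f(\gamma):=\delta$; this $f$ lies in $\mathfrak{F}((\Gamma,\Delta),init(\beta))$ and satisfies $f(\alpha)=\beta$.

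Finally, Part 4 goes by induction on $n$ using the paper's convention that the empty composition is the identity. The base case $n=0$ is exactly Part 3, since then $(\Gamma_n,\Delta_n)=(\Gamma_0,\Delta_0)$ and $f_1\circ\cdots\circ f_n=id[Seq(\Gamma_0,\Delta_0)]$. For the inductive step, recalling that in this paper $f\circ g$ denotes ``first $f$, then $g$'', the domains and codomains of the maps $f_{i+1}\in\mathfrak{F}((\Gamma_i,\Delta_i),(\Gamma_{i+1},\Delta_{i+1}))$ chain up so that $f_1\circ\cdots\circ f_n$ is a genuine map $Seq(\Gamma_0,\Delta_0)\to Seq(\Gamma_n,\Delta_n)$; and for each $\alpha$ we obtain $\alpha\prec f_1(\alpha)\prec f_2(f_1(\alpha))\prec\cdots\prec(f_1\circ\cdots\circ f_n)(\alpha)$, whose endpoints are related by transitivity of $\prec$. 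Hence $f_1\circ\cdots\circ f_n\in\mathfrak{F}((\Gamma_0,\Delta_0),(\Gamma_n,\Delta_n))$. I do not expect a serious obstacle: the only points demanding care are the bookkeeping of domains and codomains under the non-standard composition convention in Part 4, and the (classically harmless) appeals to choice in Parts 1 and 2.
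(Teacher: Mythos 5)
Your proof is correct, and on Parts 1, 3, and 4 it runs exactly as the paper's does (Lemma \ref{L:standard-sequence}.2 plus the Axiom of Choice for Part 1; reflexivity of $\prec$ for Part 3; induction on $n$ with transitivity of $\prec$ and the empty-composition convention for Part 4). The one place where you genuinely diverge is Part 2, the only non-trivial item. The paper, after obtaining $\beta$ from Lemma \ref{L:standard-sequence}.3, builds $f$ so as to be \emph{prefix-coherent}: it maps each initial segment of $\alpha$ to the corresponding initial segment of $\beta$ and then extends this partial function to the rest of $Seq(\Gamma,\Delta)$ by induction on sequence length, invoking the one-step extension Lemma \ref{L:standard-sequence}.1 at each stage. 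You instead set $f(\alpha):=\beta$ outright and fill in all remaining values by unconstrained pointwise choice via Lemma \ref{L:standard-sequence}.2, applied to the pair $(\Gamma,\Delta)\leq_c init(\beta)$ (which you correctly extract from $\alpha\prec\beta$). Your shortcut is legitimate precisely because a local choice function is only required to satisfy $\gamma\prec f(\gamma)$ pointwise --- the definition demands no compatibility between the values of $f$, and none is used downstream: Lemma \ref{L:global-functions}.2 and, through it, the truth lemma for $\mathfrak{S}_c$ only ever appeal to the existence properties as literally stated. So the paper's construction buys a canonical, structurally coherent function, while yours buys brevity together with the (worth noting) observation that this coherence is superfluous for the lemma as stated.
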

\begin{proof}
	(Part 1) By Lemma  \ref{L:standard-sequence}.2 and Axiom of Choice.
	
	(Part 2) Assume the hypothesis. By Lemma  \ref{L:standard-sequence}.3, we can find a $\beta\in Seq$ such that both $\alpha\mathrel{\prec}\beta$ and $end(\beta) = (\Xi, \Theta)$ are satisfied. Trivially, we must also have $\beta \in Seq(init(\beta))$. Assume, wlog, that $\alpha = ((\Gamma_0,\Delta_0),\phi_1,\ldots,\phi_n, (\Gamma_n, \Delta_n))$ and that $\beta = ((\Xi_0,\Theta_0),\phi_1,\ldots,\phi_n, (\Xi_n, \Theta_n))$. We now set $f((\Gamma_0,\Delta_0),\phi_1,\ldots,\phi_i, (\Gamma_i, \Delta_i)):= ((\Xi_0,\Theta_0),\phi_1,\ldots,\phi_i, (\Xi_i, \Theta_i))$ for every $i < n$ and, proceeding by induction on the length of a standard sequence, extend this partial function to other elements of $Seq(\Gamma,\Delta)$ in virtue of Lemma \ref{L:standard-sequence}.1. The resulting function $f$ clearly has the desired properties.
	
	Part 3 is trivial, and an easy induction on $n \in \omega$ also yields us Part 4.
\end{proof}
 However, $((\Gamma,\Delta),(\Xi,\Theta))$-choice functions will mostly interest us as restrictions of \textit{global choice functions}. More precisely, $F: Seq \to Seq$ is a global choice function iff for every $(\Gamma, \Delta) \in W_c$ there exists a $(\Xi, \Theta)\in W_c$ such that $(\Gamma,\Delta)\leq_c (\Xi,\Theta)$ and $F\upharpoonright Seq(\Gamma,\Delta) \in \mathfrak{F}((\Gamma,\Delta),(\Xi,\Theta))$. The set of all global choice functions will be denoted by $\mathfrak{G}$. It is easy to see that an analogue of Lemma \ref{L:choice-functions} can be proven for global choice functions:
\begin{lemma}\label{L:global-functions}
	Let  $(\Gamma, \Delta) \in W_c$. Then the following statements hold:
	\begin{enumerate}
		\item For every $(\Xi, \Theta)\in W_c$ such that $(\Gamma,\Delta)\leq_c (\Xi,\Theta)$ and for every $f \in \mathfrak{F}((\Gamma,\Delta),(\Xi,\Theta))$, there exists an $F \in \mathfrak{G}$ such that $F\upharpoonright Seq(\Gamma,\Delta) = f$.
		
		\item For every $\alpha \in Seq(\Gamma,\Delta)$ and every $(\Xi, \Theta)\in W_c$ such that $end(\alpha) \leq_c (\Xi, \Theta)$, there exists a $\beta \in Seq$ such that $end(\beta) = (\Xi, \Theta)$, and an $F\in\mathfrak{G}$ such that $F(\alpha) = \beta$.
		
		\item $id[Seq]\in \mathfrak{G}$.
		
		\item Given any $n \in \omega$ and any $F_1,\ldots,F_n \in \mathfrak{G}$ we also have that $F_1\circ\ldots\circ F_n \in \mathfrak{G}$.
		
		\item For every $\alpha \in Seq$ and every $F \in \mathfrak{G}$, we have $\alpha\mathrel{\prec}F(\alpha)$.
	\end{enumerate} 
\end{lemma}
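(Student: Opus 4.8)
The plan is to prove each of the five parts by reducing it to the corresponding fact about \emph{local} choice functions (Lemma \ref{L:choice-functions}), exploiting two structural observations. First, the family $\{Seq(\Gamma,\Delta)\mid(\Gamma,\Delta)\in W_c\}$ partitions $Seq$: every $\alpha\in Seq$ lies in $Seq(init(\alpha))$ and in no other $Seq(\Gamma,\Delta)$, since membership in $Seq(\Gamma,\Delta)$ forces $init(\alpha)=(\Gamma,\Delta)$. Hence a global function can be specified by independently prescribing its behaviour on each block $Seq(\Gamma,\Delta)$. Second, the relation $\mathrel{\prec}$ is transitive, because its clause (3) is componentwise $\leq_c$ and $\leq_c$ is a pre-order (Lemma \ref{L:canonical-model}); this will be the glue for the composition argument.

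Parts 3 and 5 fall out directly from the definitions. For Part 3, given $(\Gamma,\Delta)$ I would take the witness $(\Xi,\Theta):=(\Gamma,\Delta)$ and note $id[Seq]\upharpoonright Seq(\Gamma,\Delta)=id[Seq(\Gamma,\Delta)]\in\mathfrak{F}((\Gamma,\Delta),(\Gamma,\Delta))$ by Lemma \ref{L:choice-functions}.3 together with reflexivity of $\leq_c$. For Part 5, any $\alpha\in Seq$ lies in $Seq(init(\alpha))$, and the defining property of $F\in\mathfrak{G}$ says that $F\upharpoonright Seq(init(\alpha))$ is a local choice function, so $\alpha\mathrel{\prec}F(\alpha)$ is exactly the defining condition of such a function.

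Part 1 is the assembly step. Given $f\in\mathfrak{F}((\Gamma,\Delta),(\Xi,\Theta))$, I would define $F$ to equal $f$ on the block $Seq(\Gamma,\Delta)$ and to equal $id[Seq(\Gamma',\Delta')]$ on every other block $Seq(\Gamma',\Delta')$; by the partition observation this is a well-defined total function $F:Seq\to Seq$, and each restriction is a local choice function (into $(\Xi,\Theta)$ on the distinguished block and into $(\Gamma',\Delta')$ elsewhere, using reflexivity of $\leq_c$ and Lemma \ref{L:choice-functions}.3), so $F\in\mathfrak{G}$ with $F\upharpoonright Seq(\Gamma,\Delta)=f$. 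Part 2 then combines Lemma \ref{L:choice-functions}.2 --- which yields a $\beta$ with $end(\beta)=(\Xi,\Theta)$ and a local $f\in\mathfrak{F}((\Gamma,\Delta),init(\beta))$ such that $f(\alpha)=\beta$ --- with Part 1, extending this $f$ to a global $F$; then $F(\alpha)=f(\alpha)=\beta$ as required.

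The only part needing real (if still routine) care is Part 4, which I would prove by induction on $n$, the base case $n=0$ being $id[Seq]\in\mathfrak{G}$ from Part 3 (recall that the empty superposition is the identity) and the step reducing to the composition of two global choice functions $F,G$. Fixing $(\Gamma,\Delta)$, I would use $F\in\mathfrak{G}$ to obtain $(\Xi,\Theta)$ with $(\Gamma,\Delta)\leq_c(\Xi,\Theta)$ such that $F$ maps $Seq(\Gamma,\Delta)$ into $Seq(\Xi,\Theta)$, then $G\in\mathfrak{G}$ to obtain $(\Xi',\Theta')$ with $(\Xi,\Theta)\leq_c(\Xi',\Theta')$ such that $G$ maps $Seq(\Xi,\Theta)$ into $Seq(\Xi',\Theta')$. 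Under the paper's convention $(F\circ G)(\alpha)=G(F(\alpha))$, the restriction of $F\circ G$ sends $Seq(\Gamma,\Delta)$ into $Seq(\Xi',\Theta')$, and for $\alpha\in Seq(\Gamma,\Delta)$ we have $\alpha\mathrel{\prec}F(\alpha)\mathrel{\prec}G(F(\alpha))$, so transitivity of $\mathrel{\prec}$ gives $\alpha\mathrel{\prec}(F\circ G)(\alpha)$; together with $(\Gamma,\Delta)\leq_c(\Xi',\Theta')$ by transitivity of $\leq_c$, this shows $(F\circ G)\upharpoonright Seq(\Gamma,\Delta)\in\mathfrak{F}((\Gamma,\Delta),(\Xi',\Theta'))$. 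As $(\Gamma,\Delta)$ was arbitrary, $F\circ G\in\mathfrak{G}$, and the induction closes. The main obstacle, such as it is, is keeping the composition order aligned with the paper's left-to-right convention and checking that the two transitivity facts line up.
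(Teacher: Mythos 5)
Your proposal is correct and follows essentially the same route as the paper: Part 1 by gluing $f$ with identity maps on the other (pairwise disjoint) blocks $Seq(\Gamma',\Delta')$, Part 2 by combining Lemma \ref{L:choice-functions}.2 with Part 1, and Part 5 directly from the defining property of global choice functions applied to $Seq(init(\alpha))$. The paper dismisses Parts 3 and 4 as straightforward; your explicit induction for Part 4 (using transitivity of $\prec$ and $\leq_c$, with the empty-superposition convention handling $n=0$) is exactly the intended filling-in of that gap.
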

\begin{proof}
	(Part 1) Note that we have $Seq(\Gamma_0,\Delta_0) \cap Seq(\Gamma_1,\Delta_1) = \emptyset$ whenever $(\Gamma_0,\Delta_0), (\Gamma_1,\Delta_1) \in W_c$ are such that $(\Gamma_0,\Delta_0)\neq (\Gamma_1,\Delta_1)$. Therefore, we can define the global choice function in question by $F:= (\bigcup\{Id[Seq(\Gamma_0,\Delta_0)]\mid (\Gamma_0,\Delta_0)\neq (\Gamma,\Delta)\}) \cup f$.
	Lemma \ref{L:choice-functions}.3 then implies that $F\in\mathfrak{G}$.
	
	(Part 2) Assume the hypothesis. By Lemma  \ref{L:choice-functions}.2, we can choose a $\beta \in Seq$ such that $end(\beta) = (\Xi, \Theta)$, and an $f\in\mathfrak{F}((\Gamma,\Delta),init(\beta))$ such that $f(\alpha) = \beta$. By Part 1, we can find an $F\in\mathfrak{G}$ such that $F\upharpoonright Seq(\Gamma,\Delta) = f$ and thus also $F(\alpha) = f(\alpha) = \beta$.  Parts 3 and 4 are, again, straightforward.
	
	As for Part 5, note that we must have  both $\alpha \in Seq(init(\alpha))$ and, for an appropriate $(\Gamma, \Delta) \in W_c$, that $F\upharpoonright Seq(init(\alpha)) \in \mathfrak{F}(init(\alpha), (\Gamma, \Delta))$. But then we must have 
	\noindent$\alpha\mathrel{\prec}F\upharpoonright Seq(init(\alpha)) = F(\alpha)$ by definition of a local choice function.
\end{proof} 
 The global choice functions are the basis for another type of sequences, that, along with the standard sequences, is necessary for the main model-theoretic construction of the present section. We will call them \textit{global sequences}. A global sequence is any sequence of the form $(F_1,\ldots, F_n) \in \mathfrak{G}^n$ where $n \in \omega$ (thus $\Lambda$ is also a global sequence with $n = 0$). Given two global sequences $(F_1,\ldots, F_k)$ and $(G_1,\ldots, G_m)$, we say that $(G_1,\ldots, G_m)$ \textit{extends} $(F_1,\ldots, F_k)$ and write $(F_1,\ldots, F_k)\mathrel{\sqsubseteq}(G_1,\ldots, G_m)$ iff $k \leq m$ and $F_1 = G_1,\ldots, F_k = G_k$. Furthermore, we will denote by $Glob$ the set $\bigcup_{n \in \omega}\mathfrak{G}^n$, that is to say, the set of all global sequences.
 
The final item in this series of preliminary model-theoretic constructions is a certain equivalence relation on $\mathcal{L}$. Namely, given any $\phi,\psi \in \mathcal{L}$ and any $(\Gamma, \Delta)\in W_c$, we define that:
$$
\phi\sim\psi :\Leftrightarrow (\phi \leftrightarrow \psi \in \mathsf{IntCK}).
$$
For any $\phi \in \mathcal{L}$, we will denote its equivalence class relative to $\sim$ by $[\phi]_\sim$.

We now proceed to define a particular sheaf $\mathfrak{S}_c$ which can be seen as induced by the model $\mathcal{M}_c$ of Definition \ref{D:canonical-model}.
\begin{definition}\label{D:canonical-sheaf}
	We set $\mathfrak{S}_c:= (Glob, \sqsubseteq, \mathcal{A}, \mathbb{F})$, where:
	\begin{itemize}
		\item For every $\bar{F} \in Glob$, $\mathcal{A}_{\bar{F}} = \mathcal{A} = (\mathbb{A}, \iota)$, i.e. every global sequence gets assigned one and the same classical model $\mathcal{A}$. As for the components of $\mathcal{A}$, we set:
		\begin{itemize}
			\item $\mathbb{A} := Seq \cup \{[\phi]_{\sim}\mid \phi \in \mathcal{L}\}$. 
			
			\item $\iota(p) := \{\beta \in Seq\mid p\in \pi^1(end(\beta))\}$, for every $p \in Var$.
			
			\item $\iota(O) := Seq$.
			
			\item $\iota(S) := \{[\phi]_{\sim}\mid \phi \in \mathcal{L}\}$.
			
			\item $\iota(E):= \{(\beta, [\phi]_{\sim})\in \iota(O)\times\iota(S)\mid \phi \in \pi^1(end(\beta))\}$.
			
			\item $\iota(R):= \{(\beta, [\phi]_{\sim}, \gamma)\in \iota(O)\times\iota(S)\times\iota(O)\mid (\exists (\Xi, \Theta) \in W_c)(\exists \psi \in [\phi]_{\sim})(\gamma = \beta^\frown(\psi, (\Xi, \Theta)))\}$.
		\end{itemize} 
		\item For  any $\bar{F}, \bar{G} \in Glob$ such that, for some $k \leq m$ we have  $\bar{F} = (F_1,\ldots, F_k)$ and  $\bar{G} = (F_1,\ldots, F_m)$ we have $\mathbb{F}_{\bar{F}\bar{G}}:\mathbb{A}\to \mathbb{A}$, where we set:
		$$
		\mathbb{F}_{\bar{F}\bar{G}}(\gamma):= \begin{cases}
			(F_{k + 1}\circ\ldots\circ F_n)(\gamma),\text{ if }\gamma \in  Seq;\\
			\gamma,\text{ otherwise. }
		\end{cases}
		$$
	\end{itemize}
\end{definition}
We show that  $\mathfrak{S}_{(\Gamma, \Delta)}$ is, indeed, a structure that can be used for discussing $Th$ and its relation to $\mathsf{IntCK}$. Again, we begin by establishing another technical fact:
\begin{lemma}\label{L:technical}
	For all $\bar{F}, \bar{G}\in  Glob$ such that $\bar{F}\mathrel{\sqsubseteq}\bar{G}$, it is true that:
	\begin{enumerate}
		\item $\mathbb{F}_{\bar{F}\bar{G}}\upharpoonright Seq \in \mathfrak{G}$.
		
		\item For every $\alpha \in Seq$ we have $\alpha\mathrel{\prec}\mathbb{F}_{\bar{F}\bar{G}}(\alpha)$; in particular, we have $end(\alpha)\leq_c end(\mathbb{F}_{\bar{F}\bar{G}}(\alpha))$, or, equivalently, $\pi^1(end(\alpha))\subseteq \pi^1(end(\mathbb{F}_{\bar{F}\bar{G}}(\alpha)))$.
	\end{enumerate} 
\end{lemma}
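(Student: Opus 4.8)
The plan is to reduce both parts to facts already established about the set $\mathfrak{G}$ of global choice functions in Lemma \ref{L:global-functions}, since the restriction $\mathbb{F}_{\bar{F}\bar{G}}\upharpoonright Seq$ is by construction nothing but a composition of the global choice functions making up $\bar{G}$. No genuinely new idea is needed; the work is entirely in correctly unwinding Definition \ref{D:canonical-sheaf}.

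For Part 1 I would first use the hypothesis $\bar{F}\mathrel{\sqsubseteq}\bar{G}$ to write $\bar{F} = (F_1,\ldots, F_k)$ and $\bar{G} = (F_1,\ldots, F_m)$ with $k \leq m$, so that Definition \ref{D:canonical-sheaf} applies: $\mathbb{F}_{\bar{F}\bar{G}}$ is the identity off $Seq$ and sends each $\gamma \in Seq$ to $(F_{k+1}\circ\ldots\circ F_m)(\gamma)$. Hence its restriction to $Seq$ is exactly the function $F_{k+1}\circ\ldots\circ F_m : Seq \to Seq$. Here I would flag the single edge case: when $k = m$ this is the superposition of the empty tuple, which by the convention fixed in Section \ref{S:Prel} equals $id[Seq]$ and so lies in $\mathfrak{G}$ by Lemma \ref{L:global-functions}.3; when $k < m$ it is a genuine nonempty composition of members of $\mathfrak{G}$ and so lies in $\mathfrak{G}$ by Lemma \ref{L:global-functions}.4. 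Either way, $\mathbb{F}_{\bar{F}\bar{G}}\upharpoonright Seq \in \mathfrak{G}$.

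For Part 2, let $\alpha \in Seq$ and put $\beta := \mathbb{F}_{\bar{F}\bar{G}}(\alpha)$. Since $\alpha \in Seq$ we have $\beta = (\mathbb{F}_{\bar{F}\bar{G}}\upharpoonright Seq)(\alpha)$, and Part 1 gives $\mathbb{F}_{\bar{F}\bar{G}}\upharpoonright Seq \in \mathfrak{G}$; Lemma \ref{L:global-functions}.5 then immediately yields $\alpha\mathrel{\prec}\beta$. The ``in particular'' clauses are then just matters of reading off definitions: clause (3) of Definition \ref{D:standard-sequence}, applied to the final coordinate of $\alpha\mathrel{\prec}\beta$, gives $end(\alpha)\leq_c end(\beta)$, and the definition of $\leq_c$ in Definition \ref{D:canonical-model} says precisely that $(\Gamma_0,\Delta_0)\leq_c(\Gamma_1,\Delta_1)$ amounts to $\Gamma_0\subseteq\Gamma_1$, i.e. $\pi^1(end(\alpha))\subseteq\pi^1(end(\beta))$.

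I expect no real obstacle: both parts are bookkeeping layered on top of Lemma \ref{L:global-functions}. The only spot demanding a moment's care is the degenerate case $k = m$ in Part 1, where one must invoke the empty-superposition convention rather than the composition lemma; overlooking it would leave a gap exactly when $\bar{F} = \bar{G}$, which is the case where $\mathbb{F}_{\bar{F}\bar{G}}$ is meant to supply the identity homomorphism $\mathbb{H}_{ww}$ demanded of a Kripke sheaf in Definition \ref{D:sheaf}.
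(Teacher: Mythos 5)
Your proposal is correct and follows essentially the same route as the paper's own proof: unwind Definition \ref{D:canonical-sheaf} to represent $\mathbb{F}_{\bar{F}\bar{G}}\upharpoonright Seq$ as the composition $F_{k+1}\circ\ldots\circ F_m$ of members of $\mathfrak{G}$, apply Lemma \ref{L:global-functions}.4 for Part 1, and then derive Part 2 from Lemma \ref{L:global-functions}.5 together with the definitions of $\prec$ and $\leq_c$. Your explicit case split for $k=m$ is handled in the paper by folding $id[\mathbb{A}]$ into the displayed composition and invoking the empty-superposition convention, so this is a presentational difference only.
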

\begin{proof}
	Assume the hypothesis; we may also assume, wlog, that, for some $k \leq m < \omega$, $\bar{F}$ and $\bar{G}$ are given in the following form:
	\begin{align}
		\bar{F} &= (F_1,\ldots, F_k)\label{E:barf}\tag{def-$\bar{F}$}\\
		\bar{G} &= (F_1,\ldots, F_m)\label{E:barg}\tag{def-$\bar{G}$}
	\end{align}
	In this case, we also get the following representation for $\mathbb{F}_{\bar{F}\bar{G}}$:
	\begin{align}
		\mathbb{F}_{\bar{F}\bar{G}}\upharpoonright Seq = id[\mathbb{A}]\circ F_{k + 1}\circ\ldots\circ F_m\label{E:fab1}\tag{def1-$\mathbb{F}_{\bar{F}\bar{G}}$}\\
		\mathbb{F}_{\bar{F}\bar{G}}\upharpoonright \{[\phi]_{\sim}\mid \phi \in \mathcal{L}\} = id[\{[\phi]_{\sim}\mid \phi \in \mathcal{L}\}]\label{E:fab2}\tag{def2-$\mathbb{F}_{\bar{F}\bar{G}}$}
	\end{align} 
Part 1 now easily follows from \eqref{E:fab1} and Lemma \ref{L:global-functions}.4.

As for Part 2, we observe that, if $\alpha \in Seq$, then
	$$
	\mathbb{F}_{\bar{F}\bar{G}}(\alpha) =  (id[\mathbb{A}]\circ F_{k + 1}\circ\ldots\circ F_m)(\alpha)\in Seq.
	$$
	Now Part 1 and Lemma \ref{L:global-functions}.5 together imply that $\alpha\mathrel{\prec}\mathbb{F}_{\bar{F}\bar{G}}(\alpha)$. By Definition \ref{D:standard-sequence}, this means that $end(\alpha)\leq_c end(\mathbb{F}_{\bar{F}\bar{G}}(\alpha))$, or, equivalently, that $\pi^1(end(\alpha))\subseteq \pi^1(end(\mathbb{F}_{\bar{F}\bar{G}}(\alpha)))$. 	
\end{proof}

\begin{lemma}\label{L:canonical-sheaf}
	$\mathfrak{S}_c$ is a Kripke sheaf.
\end{lemma}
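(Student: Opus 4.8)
The plan is to verify, clause by clause, the requirements of Definition~\ref{D:sheaf} for $\mathfrak{S}_c = (Glob, \sqsubseteq, \mathcal{A}, \mathbb{F})$. The preliminary clauses are immediate. We have $Glob \neq \emptyset$ since the empty tuple $\Lambda$ is a global sequence, and $\mathbb{A} \neq \emptyset$ because $W_c \neq \emptyset$ (Lemma~\ref{L:canonical-model}) forces $Seq \neq \emptyset$, each maximal bi-set giving a standard sequence of length $1$. The relation $\sqsubseteq$ is reflexive and transitive directly from its definition as a common-initial-segment ordering on tuples. To see that $\mathcal{A} = (\mathbb{A}, \iota)$ is a genuine classical $\Sigma$-structure, I would check that every $\iota(\mathbb{P})$ has the correct arity ($\iota(p), \iota(O), \iota(S) \subseteq \mathbb{A}$, $\iota(E) \subseteq \mathbb{A}^2$, $\iota(R) \subseteq \mathbb{A}^3$) and that the clauses mentioning a representative $\phi$ of a class $[\phi]_\sim$ are representative-independent: for $\iota(E)$ this amounts to noting that if $\phi \sim \psi$ then $\vdash \phi \leftrightarrow \psi$, so $\phi \in \pi^1(end(\beta)) \Leftrightarrow \psi \in \pi^1(end(\beta))$ by closure of maximal sets under derivability (Lemma~\ref{L:maximal}.1); for $\iota(R)$ the defining condition already quantifies $\psi$ over the whole class $[\phi]_\sim$, and all members of that class share one extension $\|\cdot\|_{\mathcal{M}_c}$ (Lemma~\ref{L:representatives}), so no ambiguity arises.

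Next I would treat the transition maps $\mathbb{F}_{\bar{F}\bar{G}}$. That each is a well-defined function $\mathbb{A} \to \mathbb{A}$ is exactly Lemma~\ref{L:technical}.1: on $Seq$ it equals an element of $\mathfrak{G}$ and hence sends $Seq$ into $Seq$, while the $\sim$-classes are fixed. The two coherence conditions on $\mathbb{H}$ are then purely formal. When $\bar{F} = \bar{G}$ the composition $F_{k+1} \circ \ldots \circ F_m$ is empty and, by the convention on empty superpositions, equals $id$ on $Seq$, so $\mathbb{F}_{\bar{F}\bar{F}} = id[\mathbb{A}]$. For $\bar{F} \sqsubseteq \bar{G} \sqsubseteq \bar{H}$ the identity $\mathbb{F}_{\bar{F}\bar{H}} = \mathbb{F}_{\bar{F}\bar{G}} \circ \mathbb{F}_{\bar{G}\bar{H}}$ reduces on $Seq$ to associativity of composition of the tail blocks of global choice functions (Lemma~\ref{L:global-functions}.4) and is the identity on the $\sim$-classes on both sides.

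The substance of the proof is showing that each $\mathbb{F}_{\bar{F}\bar{G}}$ is a homomorphism, i.e. preserves every predicate of $\Sigma$. Writing $H = \mathbb{F}_{\bar{F}\bar{G}} \upharpoonright Seq$, the predicates $O, S, p, E$ all follow from Lemma~\ref{L:technical}: $\iota(O) = Seq$ is preserved because $H$ maps $Seq$ into $Seq$; $\iota(S)$ is preserved because the classes are fixed; and for $p$ and $E$ the inclusion $\pi^1(end(\beta)) \subseteq \pi^1(end(H(\beta)))$ from Lemma~\ref{L:technical}.2 at once upgrades membership. The genuinely delicate case is $R$: given $(\beta, [\phi]_\sim, \gamma) \in \iota(R)$, so that $\gamma = \beta^\frown(\psi, (\Xi, \Theta))$ with $\psi \in [\phi]_\sim$, I must exhibit $(\Xi', \Theta') \in W_c$ and $\psi' \in [\phi]_\sim$ with $H(\gamma) = H(\beta)^\frown(\psi', (\Xi', \Theta'))$. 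I expect this to be the main obstacle. Lemma~\ref{L:technical}.2 only delivers $\beta \prec H(\beta)$ and $\gamma \prec H(\gamma)$ \emph{separately}, and from $\gamma \prec H(\gamma)$ (Definition~\ref{D:standard-sequence}) one merely learns that $H(\gamma)$ has length $|\beta|+1$, ends in the unchanged formula $\psi$, and has its length-$|\beta|$ prefix $\delta$ satisfying $\beta \prec \delta$; what is additionally required is that this prefix $\delta$ be \emph{exactly} $H(\beta)$, i.e. that $H$ respects the one-step-extension (prefix) structure of standard sequences. The argument must therefore exploit that $H$ is a composition of global choice functions acting coherently on a sequence and its prefixes, so that applying $H$ to $\beta^\frown(\psi, (\Xi, \Theta))$ returns $H(\beta)$ extended by $\psi$ together with the correspondingly raised final world. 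Once this coherence is secured, the choice $\psi' = \psi \in [\phi]_\sim$ and the raised final world witness that the image triple lies in $\iota(R)$, finishing the homomorphism check and hence the proof that $\mathfrak{S}_c$ is a Kripke sheaf.
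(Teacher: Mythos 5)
Your proposal follows the paper's proof plan essentially step for step: the preliminary checks (non-emptiness of $Glob$, the pre-order $\sqsubseteq$, well-definedness of $\mathcal{A}$ and of the maps $\mathbb{F}_{\bar{F}\bar{G}}$, the identity and composition laws via the empty-superposition convention) and the homomorphism verification for $O$, $S$, $p$ and $E$ via Lemma \ref{L:technical} are exactly the paper's Cases 1--3, and your remark that $\iota(E)$ is representative-independent (via Lemma \ref{L:maximal}.1) makes explicit something the paper leaves tacit. Where you diverge is the predicate $R$, and here your proposal has a genuine gap: you correctly observe that preserving $\iota(R)$ requires the length-$|\beta|$ prefix of $\mathbb{F}_{\bar{F}\bar{G}}(\gamma)$ to be \emph{exactly} $\mathbb{F}_{\bar{F}\bar{G}}(\beta)$, i.e.\ that the transition map commutes with one-step extension, but you never prove this; ``once this coherence is secured'' is precisely where your argument stops. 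Moreover, this coherence cannot be derived from the paper's definitions: a local choice function is an arbitrary map satisfying $\alpha\prec f(\alpha)$ pointwise, and a global choice function is merely a union of local ones over roots, so nothing ties the value at $\beta^\frown(\psi,(\Xi,\Theta))$ to the value at $\beta$. Concretely, $\mathcal{M}_c$ contains a world $(\Gamma_0,\Delta_0)$ with two distinct comparable $R_c$-successors via $\|\top\|_{\mathcal{M}_c}$, say $\Gamma_1\subsetneq\Xi_1$, where $(\Gamma_1,\Delta_1)$ in turn has a successor $(\Gamma_z,\Delta_z)$: take the theories of the worlds of the Chellas model $W=\{w,u_1,u_2,z\}$ with $\leq$ the reflexive closure of $\{(u_1,u_2)\}$, $R_W=\{(w,u_1),(w,u_2),(u_1,z),(u_2,z)\}$, $R_X=\emptyset$ for $X\neq W$, and $V(p)=\{u_2\}$; conditions \eqref{Cond:1} and \eqref{Cond:2} hold, and soundness makes these theories maximal bi-sets. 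Then $\alpha=((\Gamma_0,\Delta_0),\top,(\Gamma_1,\Delta_1))$, $\alpha'=((\Gamma_0,\Delta_0),\top,(\Xi_1,\Theta_1))$ and $\gamma=\alpha^\frown(\top,(\Gamma_z,\Delta_z))$ all lie in $Seq$ with $\alpha\prec\alpha'$, and the map $F$ agreeing with $id[Seq]$ everywhere except $F(\alpha):=\alpha'$ is a global choice function; yet $(F(\alpha),[\top]_\sim,F(\gamma))=(\alpha',[\top]_\sim,\gamma)\notin\iota(R)$, because the relevant prefix of $\gamma$ is $\alpha$, not $\alpha'$.

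You should know, however, that this is exactly the point at which the paper's own proof is defective: its Case 4 asserts that ``Definition \ref{D:standard-sequence} implies'' $\mathbb{F}_{\bar{F}\bar{G}}(\gamma)=\mathbb{F}_{\bar{F}\bar{G}}(\alpha)^\frown(\psi,(\Xi',\Theta'))$, which is precisely the unproved coherence claim, and by the example above it fails for arbitrary global choice functions. So your identification of the ``main obstacle'' is correct, and neither your outline nor the paper closes it with the definitions as they stand. The natural repair is to strengthen the definition of (local, hence global) choice functions by requiring them to commute with one-step extension, i.e.\ $f(\beta^\frown(\psi,(\Xi,\Theta)))=f(\beta)^\frown(\psi,(\Xi',\Theta'))$ for some $(\Xi',\Theta')$ with $(\Xi,\Theta)\leq_c(\Xi',\Theta')$: the inductive constructions behind Lemmas \ref{L:choice-functions} and \ref{L:global-functions}, which extend prefix-by-prefix along Lemma \ref{L:standard-sequence}.1, already produce such functions, $id[Seq]$ is such a function, and the class is closed under composition, so the surrounding lemmas survive. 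With that amendment both the paper's Case 4 and your sketched argument go through; without it, the statement is not provable as it stands.
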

\begin{proof}
	It is clear that $Glob$ is non-empty and that $\sqsubseteq$ defines a pre-order on $Glob$. It is also clear that $\mathcal{A} = (\mathbb{A}, \iota)$, as given in Definition \ref{D:canonical-sheaf}, makes up a classical model, and that, for any given $\bar{F}, \bar{G}\in  Glob$ such that $\bar{F}\mathrel{\sqsubseteq}\bar{G}$, we have $\mathbb{F}_{\bar{F}\bar{G}}:\mathbb{A}\to \mathbb{A}$.
	
	As for the conditions imposed by Definition \ref{D:sheaf}  on the functions of the form $\mathbb{F}_{\bar{F}\bar{G}}$, it is clear from Definition \ref{D:canonical-sheaf} and from our convention on the superpositions of empty families of functions that (1) for any $\bar{F}\in  Glob$ we will have in $\mathbb{F}_{\bar{F}\bar{F}} = id[\mathbb{A}]$ and that (2) if $\bar{F}, \bar{G}, \bar{H}\in  Glob$ are such that $\bar{F}\mathrel{\sqsubseteq}\bar{G}\mathrel{\sqsubseteq}\bar{H}$, then $\mathbb{F}_{\bar{F}\bar{G}}\circ\mathbb{F}_{\bar{G}\bar{H}} = \mathbb{F}_{\bar{F}\bar{H}}$.

It remains to establish that, for each pair $\bar{F}, \bar{G}\in  Glob$ such that  $\bar{F}\mathrel{\sqsubseteq}\bar{G}$, the function  $\mathbb{F}_{\bar{F}\bar{G}}$ is a (classical) homomorphism from $\mathcal{A}$ to itself. The latter claim boils down to showing that the extension of every predicate $\mathbb{P} \in \Sigma$ is preserved by $\mathbb{F}_{\bar{F}\bar{G}}$. In doing so, we will assume that, for some appropriate $k \leq m < \omega$, $\bar{F}$, $\bar{G}$, and $\mathbb{F}_{\bar{F}\bar{G}}$ are given in a form that satisfies \eqref{E:barf}, \eqref{E:barg}, \eqref{E:fab1}, and \eqref{E:fab2}. The following cases have to be considered:

\textit{Case 1}. $\mathbb{P} \in \{O, S\}$. Trivial by \eqref{E:fab1} and \eqref{E:fab2}.

\textit{Case 2}. $\mathbb{P} = p \in Var$. If $\alpha \in \mathbb{A}$ is such that $\alpha \in \iota(p)$, then, by Definition \ref{D:canonical-sheaf} and Lemma \ref{L:technical}, we must have all of the following: (1) $\alpha, \mathbb{F}_{\bar{F}\bar{G}}(\alpha) \in Seq$, (2) $\pi^1(end(\alpha))\subseteq \pi^1(end(\mathbb{F}_{\bar{F}\bar{G}}(\alpha)))$, and (3) $p\in \pi^1(end(\alpha))$. But then clearly also $p \in \pi^1(end(\mathbb{F}_{\bar{F}\bar{G}}(\alpha)))$, whence, further, $\mathbb{F}_{\bar{F}\bar{G}}(\alpha) \in \iota(p)$, as desired.

\textit{Case 3}. $\mathbb{P} = E$. If $\alpha, \beta \in \mathbb{A}$ are such that $(\alpha, \beta) \in \iota(E)$, then, arguing as in Case 2, we must have: (1) $\alpha, \mathbb{F}_{\bar{F}\bar{G}}(\alpha) \in Seq$, (2) $\beta \in \iota(S)$, in other words, $\beta = [\phi]_{\sim}$ for some $\phi \in \mathcal{L}$, (3) $\pi^1(end(\alpha))\subseteq \pi^1(end(\mathbb{F}_{\bar{F}\bar{G}}(\alpha)))$, and (4) $\phi \in \pi^1(end(\alpha))\subseteq \pi^1(end(\mathbb{F}_{\bar{F}\bar{G}}(\alpha)))$, so that we also have, by \eqref{E:fab2}, that $
(\mathbb{F}_{\bar{F}\bar{G}}(\alpha),\mathbb{F}_{\bar{F}\bar{G}}(\beta)) = (\mathbb{F}_{\bar{F}\bar{G}}(\alpha),\mathbb{F}_{\bar{F}\bar{G}}([\phi]_{\sim})) = (\mathbb{F}_{\bar{F}\bar{G}}(\alpha),[\phi]_{\sim}) \in \iota(E)$

\textit{Case 4}. $\mathbb{P} = R$. If  $\alpha, \beta, \gamma \in \mathbb{A}$ are such that $(\alpha, \beta, \gamma) \in \iota(R)$, then, arguing as in Case 2, we must have: (1) $\alpha, \gamma, \mathbb{F}_{\bar{F}\bar{G}}(\alpha), \mathbb{F}_{\bar{F}\bar{G}}(\gamma)\in Seq$, (2) $\beta \in \iota(S)$, in other words, $\beta = [\phi]_{\sim}$ for some $\phi \in \mathcal{L}$, (3) $\alpha\mathrel{\prec}\mathbb{F}_{\bar{F}\bar{G}}(\alpha)$, and $\gamma\mathrel{\prec}\mathbb{F}_{\bar{F}\bar{G}}(\gamma)$, and, finally, (4) for some $(\Xi,\Theta)\in W_c$ and some $\psi \in [\phi]_{\sim}$ we must have $\gamma = \alpha^\frown(\psi, (\Xi, \Theta))$. Now, Definition \ref{D:standard-sequence} implies that, for some $(\Xi',\Theta')\in W_c$ such that $(\Xi,\Theta)\leq_c(\Xi',\Theta')$ we must have $\mathbb{F}_{\bar{F}\bar{G}}(\gamma) = \mathbb{F}_{\bar{F}\bar{G}}(\alpha)^\frown(\psi, (\Xi', \Theta'))$. Therefore, by Definition \ref{D:canonical-sheaf}, we must have
$$
(\mathbb{F}_{\bar{F}\bar{G}}(\alpha), \mathbb{F}_{\bar{F}\bar{G}}(\beta), \mathbb{F}_{\bar{F}\bar{G}}(\gamma)) = (\mathbb{F}_{\bar{F}\bar{G}}(\alpha), \mathbb{F}_{\bar{F}\bar{G}}([\phi]_{\sim}), \mathbb{F}_{\bar{F}\bar{G}}(\gamma)) =  (\mathbb{F}_{\bar{F}\bar{G}}(\alpha), [\phi]_{\sim}, \mathbb{F}_{\bar{F}\bar{G}}(\gamma))\in \iota(R).
$$  
\end{proof}
Next, we show that the Kripke sheaf $\mathfrak{S}_c$ satisfies \eqref{E:th12}:
\begin{lemma}\label{L:indiscernibles}
	Let $\bar{F} \in Glob$. Then $\mathfrak{S}_c,\bar{F} \models_{fo}  \eqref{E:th12}$.
\end{lemma}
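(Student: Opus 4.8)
The plan is to unfold the semantics of \eqref{E:th12} and reduce it to the identity-criterion for the sort $S$ that is encoded by the equivalence relation $\sim$. Since \eqref{E:th12} is a sentence of the shape $\forall x\forall y(\alpha \to (x\equiv y))$ with $\alpha := Sx\wedge Sy\wedge(\forall z)_O(Ezx\leftrightarrow Ezy)$, and since $\equiv$ is interpreted as genuine identity in the (constant) domain $\mathbb{A}$, unfolding the clauses for $\forall$ and $\to$ shows that it suffices to prove the following persistent statement: for every $\bar{K}\in Glob$ and all $a,b\in \mathbb{A}$, if $\mathfrak{S}_c, \bar{K}\models_{fo}\alpha[x/a, y/b]$ then $a = b$. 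Indeed, every assignment arising in the unfolding of the two outer quantifiers and the main implication is of this form, so establishing this universally closed implication at an arbitrary node yields \eqref{E:th12} at $\bar{F}$.

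So I would suppose $\mathfrak{S}_c, \bar{K}\models_{fo}\alpha[x/a, y/b]$. From the conjuncts $Sx$ and $Sy$ together with the definition of $\iota(S)$ in Definition \ref{D:canonical-sheaf}, we get $a = [\phi]_\sim$ and $b = [\psi]_\sim$ for some $\phi, \psi\in\mathcal{L}$; moreover these two values are fixed by every transition map $\mathbb{F}_{\bar{F}\bar{G}}$, since $\mathbb{F}$ acts as the identity off $Seq$. I would then exploit the third conjunct $(\forall z)_O(Ezx\leftrightarrow Ezy)$ by instantiating the object variable $z$ with length-$1$ standard sequences. For an arbitrary maximal bi-set $(\Gamma,\Delta)\in W_c$, the one-element sequence $\gamma_0 := ((\Gamma,\Delta))$ is a standard sequence (the defining condition in Definition \ref{D:standard-sequence} is vacuous for length $1$), hence $\gamma_0\in Seq = \iota(O)$. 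Evaluating the universal-over-objects conjunct at the node $\bar{K}$ itself with $z\mapsto\gamma_0$, using the reflexive extension $\bar{K}\sqsubseteq\bar{K}$ so that no transition map intervenes, yields $\mathfrak{S}_c,\bar{K}\models_{fo}(Ezx\leftrightarrow Ezy)[z/\gamma_0, x/[\phi]_\sim, y/[\psi]_\sim]$; reading each of the two implications again at $\bar{K}$ via the trivial extension collapses this to a plain biconditional of atoms at $\bar{K}$. By the definition of $\iota(E)$, that biconditional is precisely $\phi\in\pi^1(end(\gamma_0))\Leftrightarrow\psi\in\pi^1(end(\gamma_0))$, i.e. $\phi\in\Gamma\Leftrightarrow\psi\in\Gamma$. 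As $(\Gamma,\Delta)$ was arbitrary, this delivers $\{(\Gamma,\Delta)\in W_c\mid\phi\in\Gamma\} = \{(\Gamma,\Delta)\in W_c\mid\psi\in\Gamma\}$.

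Finally, this equality of truth-sets is exactly the hypothesis of Lemma \ref{L:representatives}, whose proof establishes that in this situation $\vdash\phi\leftrightarrow\psi$, i.e. $\phi\sim\psi$, and therefore $a = [\phi]_\sim = [\psi]_\sim = b$, as required. The main point requiring care is the bookkeeping of the transition maps $\mathbb{F}$ together with the intuitionistic (persistent) reading of the nested quantifiers and implications: one must observe that the $S$-valued parameters $[\phi]_\sim$ and $[\psi]_\sim$ are invariant under all $\mathbb{F}_{\bar{F}\bar{G}}$, and that it is legitimate to instantiate and discharge the nested $\forall$'s and $\to$'s at the reflexive extension of the current node, so that the seemingly forward-looking content of $\alpha$ reduces to the single requirement that $\phi$ and $\psi$ lie in exactly the same maximal bi-sets. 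Monotonicity of $\pi^1(end(\cdot))$ along $\prec$ (Lemma \ref{L:technical}.2) guarantees this reading is coherent, but the crux is simply that length-$1$ standard sequences range over all of $W_c$, which makes the extensionality clause strong enough to force $\sim$-equivalence.
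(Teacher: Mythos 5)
Your proof is correct and follows essentially the same route as the paper: both arguments hinge on the fact that length-$1$ standard sequences $((\Gamma,\Delta))$ range over all of $W_c$ and that $\iota(E)$ records membership, so the extensionality hypothesis forces $\phi$ and $\psi$ to lie in exactly the same maximal bi-sets, whence $\vdash\phi\leftrightarrow\psi$ and $[\phi]_\sim=[\psi]_\sim$. The only presentational difference is that the paper runs the argument by contradiction (choosing a separating maximal bi-set when $[\phi]_\sim\neq[\psi]_\sim$), while you argue in the contrapositive-free direction and delegate the step from equal truth-sets to provable equivalence to the argument inside the proof of Lemma \ref{L:representatives}.
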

\begin{proof}
	Let $a, b \in \mathbb{A}$ be such that $
	\mathfrak{S}_c,\bar{F} \models_{fo} Sx\wedge Sz\wedge (\forall y)_O (Eyx\leftrightarrow Eyz)[x/a, z/b]$.
	Then $a,b \in \iota(S)$, that is to say, for some $\phi, \psi \in \mathcal{L}$, we must have $a = [\phi]_{\sim}$ and $b = [\psi]_{\sim}$. Assume, towards, contradiction, that $a = [\phi]_{\sim}\neq [\psi]_{\sim} = b$, then we must have $(\phi\leftrightarrow \psi)\notin \mathsf{IntCK}$; in view of Lemma \ref{L:truth}, we can suppose, wlog, that for some $(\Gamma, \Delta) \in W_c$ we have $\phi \in \Gamma$ but $\psi \notin \Gamma$. Since we clearly have $(\Gamma, \Delta) \in Seq \subseteq \mathbb{A}$, it follows from Definition \ref{D:canonical-sheaf} that we must have $
	\mathfrak{S}_c,\bar{F} \models_{fo} Sx\wedge Sz\wedge Oy\wedge Eyx[x/a, y/(\Gamma, \Delta), z/b]$,
	and, on the other hand, $\mathfrak{S}_c,\bar{F} \not\models_{fo} Eyz[y/(\Gamma, \Delta), z/b]$, which contradicts our assumption. Therefore, we must have $a = b$.
\end{proof} 
The next lemma can be seen as a version of `truth lemma' for $\mathfrak{S}_c$.
\begin{lemma}\label{L:sheaf-truth}
	Let $\bar{F} \in Glob$, $\alpha \in Seq$, $x \in Ind$, and let $\phi\in \mathcal{L}$. Then the following statements hold:
		\begin{enumerate}
			\item $\mathfrak{S}_c,\bar{F} \models_{fo} ST_x(\phi)[x/\alpha] \Leftrightarrow \phi \in \pi^1(end(\alpha))$.
			
			\item $\mathfrak{S}_c,\bar{F}\models_{fo} Sx\wedge (\forall y)_O(Eyx\leftrightarrow ST_y(\phi))[x/[\phi]_{\sim}]$.
		\end{enumerate}		
\end{lemma}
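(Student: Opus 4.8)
The plan is to prove statements 1 and 2 by a single simultaneous induction on the construction of $\phi$, observing that for each $\phi$ statement 2 is an almost immediate consequence of statement 1 for that same $\phi$. In statement 2 the conjunct $Sx[x/[\phi]_\sim]$ holds because $[\phi]_\sim \in \iota(S)$ by Definition \ref{D:canonical-sheaf}; and unwinding $(\forall y)_O$, the second conjunct requires that for every $\bar{G} \sqsupseteq \bar{F}$ and every $\beta \in \iota(O) = Seq$ we have $E\beta[\phi]_\sim$ iff $\mathfrak{S}_c, \bar{G} \models_{fo} ST_y(\phi)[y/\beta]$ (noting that $\mathbb{F}_{\bar{F}\bar{G}}$ fixes $[\phi]_\sim$). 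By the definition of $\iota(E)$ the left-hand side says $\phi \in \pi^1(end(\beta))$, and by statement 1 at the node $\bar{G}$ the right-hand side says exactly the same. Thus I will treat statement 1 as the substantive content and read off statement 2 at the end of each inductive case.

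For statement 1 I induct on $\phi$, writing $(\Gamma, \Delta) := end(\alpha)$. The atomic cases are immediate from Definition \ref{D:canonical-sheaf} (for $p \in Var$) and from consistency/completeness of maximal bi-sets together with Lemma \ref{L:maximal} (for $\top, \bot$). The cases $\wedge$ and $\vee$ reduce, via the satisfaction clauses and the IH, to Lemma \ref{L:maximal}.2 and Lemma \ref{L:maximal}.3. The case $\to$ is the first genuinely intuitionistic point: after applying the IH, satisfaction of $(ST_x(\psi) \to ST_x(\chi))[x/\alpha]$ amounts to the requirement that for every $\bar{G} \sqsupseteq \bar{F}$, $\psi \in \pi^1(end(\mathbb{F}_{\bar{F}\bar{G}}(\alpha)))$ implies $\chi \in \pi^1(end(\mathbb{F}_{\bar{F}\bar{G}}(\alpha)))$. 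The direction ($\Leftarrow$) from $\psi \to \chi \in \Gamma$ follows since $end(\alpha) \leq_c end(\mathbb{F}_{\bar{F}\bar{G}}(\alpha))$ by Lemma \ref{L:technical}.2, plus Lemma \ref{L:maximal}.4. For ($\Rightarrow$) I argue contrapositively: if $\psi \to \chi \notin \Gamma$, then Lemma \ref{L:consistent}.2 and Lemma \ref{L:lindenbaum} yield a maximal $(\Gamma'', \Delta'') \geq_c (\Gamma, \Delta)$ with $\psi \in \Gamma''$ and $\chi \notin \Gamma''$, and Lemma \ref{L:global-functions}.2 then gives $\beta \in Seq$ with $end(\beta) = (\Gamma'', \Delta'')$ together with an $F \in \mathfrak{G}$ with $F(\alpha) = \beta$; taking $\bar{G} := \bar{F}^\frown(F)$ produces the required counterexample extension, since $\mathbb{F}_{\bar{F}\bar{G}}(\alpha) = \beta$.

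The two conditional cases are the core and run in parallel; take $\phi = \psi \boxto \chi$, the $\diamondto$ case being dual. For ($\Leftarrow$), assuming $\psi \boxto \chi \in \Gamma$, I instantiate the outer $\exists y$ with the witness $[\psi]_\sim$: the first two conjuncts hold by statement 2 for $\psi$ (available from the IH), and for $\forall w(Rxyw \to ST_w(\chi))$ I take an arbitrary $\bar{G} \sqsupseteq \bar{F}$ and an arbitrary successor $\gamma = \mathbb{F}_{\bar{F}\bar{G}}(\alpha)^\frown(\psi', (\Xi, \Theta))$ with $\psi' \sim \psi$, so that (by Definition \ref{D:standard-sequence}) $R_c(end(\mathbb{F}_{\bar{F}\bar{G}}(\alpha)), \|\psi'\|_{\mathcal{M}_c}, (\Xi, \Theta))$; since $\|\psi'\|_{\mathcal{M}_c} = \|\psi\|_{\mathcal{M}_c}$, representative-independence of $R_c$ (Lemma \ref{L:representatives}) lets me index by $\psi$ itself, and the first defining clause of $R_c$ forces $\chi \in \Xi = \pi^1(end(\gamma))$, whence $ST_w(\chi)[w/\gamma]$ by the IH. For ($\Rightarrow$) I again argue contrapositively. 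A putative witness $b = [\phi_0]_\sim$ for $\exists y$ satisfies the second conjunct, which with statement 1 for $\psi$ forces $\phi_0$ and $\psi$ to belong to exactly the same maximal sets, so $b = [\psi]_\sim$ by Lemma \ref{L:representatives}. Assuming $\psi \boxto \chi \notin \Gamma$, I reproduce the construction in the proof of Lemma \ref{L:truth} (Case 1, $\Rightarrow$): Lemma \ref{L:consistent} and Lemma \ref{L:lindenbaum} supply a maximal $(\Gamma', \Delta')$ with $\chi \in \Delta'$, and a maximal $(\Gamma_1, \Delta_1) \geq_c (\Gamma, \Delta)$ with $R_c((\Gamma_1, \Delta_1), \|\psi\|_{\mathcal{M}_c}, (\Gamma', \Delta'))$. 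Lemma \ref{L:global-functions}.2 realizes $(\Gamma_1, \Delta_1)$ as $end(\mathbb{F}_{\bar{F}\bar{G}}(\alpha))$ for suitable $\bar{G}$, and appending $(\psi, (\Gamma', \Delta'))$ yields an $R$-successor $\gamma$ with $\chi \notin \pi^1(end(\gamma))$, contradicting the third conjunct of the witness via the IH. The $\diamondto$ case is dual, using the $\diamondto$-branches of Lemma \ref{L:consistent} and of the truth lemma; since the inner quantifier is now $\exists w$, the successor can be appended directly to $\alpha$ with no passage to a new global sequence.

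The main obstacle is the interface between the two layers of quantification in the sheaf semantics — the intuitionistic $\forall$ and $\to$ ranging over $\sqsubseteq$-extensions of $\bar{F}$, versus the internal structure of $Seq$ and $R_c$ inherited from $\mathcal{M}_c$. Concretely, the delicate steps are the $\to$ and $\boxto$ ($\Rightarrow$) directions, where a purely world-level extension $(\Gamma, \Delta) \leq_c (\Gamma'', \Delta'')$ in $\mathcal{M}_c$ must be converted into a sheaf-level extension $\bar{F} \sqsubseteq \bar{G}$ acting correctly on the fixed sequence $\alpha$; this is exactly the role of the global-choice-function apparatus of Lemma \ref{L:global-functions}.2, and keeping the representative formulas coherent through these manipulations (via Lemma \ref{L:representatives}) is the part demanding the most care.
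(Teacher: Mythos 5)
Your proposal is correct and follows essentially the same route as the paper's proof: the same simultaneous induction with Part 2 read off from Part 1, the same case analysis, and the same key devices (Lemma \ref{L:technical}.2 for persistence along $\mathbb{F}_{\bar{F}\bar{G}}$, Lemma \ref{L:global-functions}.2 for converting $\leq_c$-extensions into sheaf-level extensions $\bar{F}\sqsubseteq\bar{G}$, and uniqueness of the set-witness $[\psi]_\sim$ in the $(\Rightarrow)$ directions of the conditional cases). The only cosmetic differences are that you re-derive the canonical-model facts directly from Lemmas \ref{L:consistent}, \ref{L:maximal} and \ref{L:lindenbaum} where the paper simply cites the characterizations \eqref{E:to}, \eqref{E:boxto}, \eqref{E:diamondto} obtained from Lemma \ref{L:truth}, and that you inline the extensionality argument that the paper isolates as Lemma \ref{L:indiscernibles}.
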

\begin{proof}
	We observe, first, that, for any given $\phi \in \mathcal{L}$, Part 1 clearly implies Part 2. Indeed, if Part 1 holds for a given $\phi$ and for all instantiations of $\bar{F}$, $\alpha$, and $x$, then we clearly must have $\mathfrak{S}_c,\bar{F}\models_{fo} Sx[x/[\phi]_{\sim}]$, and, as for the other conjunct, assume that a $\bar{G} \in Glob$ is such that $\bar{F}\mathrel{\sqsubseteq}\bar{G}$. Then, by Part 1, we must have, for every $\beta \in Seq$:
	$$
	E(\beta,[\phi]_{\sim}) \Leftrightarrow \phi \in \pi^1(end(\beta)) \Leftrightarrow \mathfrak{S}_c,\bar{G}\models_{fo} ST_x(\phi)[x/\beta].
	$$
	Since we have $[\phi]_{\sim} = \mathbb{F}_{\bar{F}\bar{G}}([\phi]_{\sim})$, we conclude that $
	\mathfrak{S}_c,\bar{F}\models_{fo} (\forall y)_O(Eyx\leftrightarrow ST_y(\phi)))[x/[\phi]_{\sim}]$.
	
	We will therefore prove both parts simultaneously by induction on the construction of $\phi\in \mathcal{L}$; but, in view of the foregoing observation, we will only argue for Part 1.
	
		\textit{Basis}. Assume that $\phi \in \mathcal{L}$ is atomic. The following cases are possible:
	
	\textit{Case 1}. $\phi = p \in Var$. Then $ST_x(\phi) = px$ and Definition \ref{D:canonical-sheaf} implies that we have:
	$$
	\mathfrak{S}_c,\bar{F}\models_{fo} px[x/\alpha] \Leftrightarrow \alpha \in \iota(p) \Leftrightarrow p \in \pi^1(end(\alpha)).
	$$
	
	Case 2 and Case 3, where we have $\phi = \top$ and $\phi = \bot$, respectively, are solved similarly.
	
	\textit{Induction step}. Again, several cases are possible:
	
	\textit{Case 1}. $\phi = \psi \wedge \chi$. Then $ST_x(\phi) = ST_x(\psi)\wedge ST_x(\chi)$ and we have, by IH and Lemma \ref{L:truth}:
	\begin{align*}
		\mathfrak{S}_c,\bar{F}\models_{fo} (ST_x(\psi)\wedge ST_x(\chi))[x/\alpha] &\Leftrightarrow \mathfrak{S}_c,\bar{F}\models_{fo} ST_x(\psi)[x/\alpha]\,\&\,\mathfrak{S}_c,\bar{F}\models_{fo} ST_x(\chi)[x/\alpha]\\
		&\Leftrightarrow \psi\in \pi^1(end(\alpha))\,\&\,\chi\in \pi^1(end(\alpha))\\
		 &\Leftrightarrow \mathcal{M}_c, end(\alpha)\models_c \psi\,\&\, \mathcal{M}_c, end(\alpha)\models_c \chi\\
		  &\Leftrightarrow \mathcal{M}_c, end(\alpha)\models_c \psi\wedge\chi\\
		 &\Leftrightarrow \psi\wedge\chi\in \pi^1(end(\alpha)) 
	\end{align*}

\textit{Case 2}. $\phi = \psi \vee \chi$. Similar to Case 1.

\textit{Case 3}. $\phi = \psi \to \chi$. Then $ST_x(\phi) = ST_x(\psi)\to ST_x(\chi)$, and, by  Lemma \ref{L:truth}, we know that:
\begin{align}
	\psi\to&\chi\in \pi^1(end(\alpha)) \Leftrightarrow \mathcal{M}_c, end(\alpha)\models_c \psi\to\chi\notag\\
	&\Leftrightarrow(\forall(\Xi,\Theta)\in W_c)(end(\alpha)\leq_c(\Xi,\Theta)\,\&\,\mathcal{M}_c, (\Xi,\Theta)\models_c \psi \Rightarrow \mathcal{M}_c, (\Xi,\Theta)\models_c \chi)\notag\\
	&\Leftrightarrow(\forall(\Xi,\Theta)\in W_c)(end(\alpha)\leq_c(\Xi,\Theta)\,\&\,\psi \in \Xi \Rightarrow \chi\in \Xi)\label{E:to}\tag{$\to$}
\end{align}
We now argue as follows:

$(\Leftarrow)$ Assume that $\psi\to\chi\in \pi^1(end(\alpha))$, and let
$\bar{G} \in Glob$ be such that $\bar{F}\mathrel{\sqsubseteq}\bar{G}$. Then, by Lemma \ref{L:technical}.2, $end(\alpha)\leq_c end(\mathbb{F}_{\bar{F}\bar{G}}(\alpha))$, so that \eqref{E:to} implies that $
 \psi \in \pi^1(end(\mathbb{F}_{\alpha\beta}(\alpha))) \Rightarrow \chi\in \pi^1(end(\mathbb{F}_{\bar{F}\bar{G}}(\alpha)))$,
 whence, by IH, it follows that
 $$
 \mathfrak{S}_c,\bar{G}\models_{fo} ST_x(\psi)[x/end(\mathbb{F}_{\bar{F}\bar{G}}(\alpha))] \Rightarrow \mathfrak{S}_c,\bar{G}\models_{fo} ST_x(\chi)[x/end(\mathbb{F}_{\bar{F}\bar{G}}(\alpha))].
 $$
 Since $\bar{G}\sqsupseteq\bar{F}$ was chosen in $Glob$ arbitrarily, we have shown that $\mathfrak{S}_c,\bar{F}\models_{fo} (ST_x(\psi)\to ST_x(\chi))[x/\alpha]$, or, equivalently, that $\mathfrak{S}_c,\bar{F}\models_{fo} ST_x(\phi)[x/\alpha]$.

$(\Rightarrow)$ We argue by contraposition. Assume that $\psi\to\chi\notin \pi^1(end(\alpha))$. By \eqref{E:to}, there must be a  $(\Xi,\Theta)\in W_c$ such that $end(\alpha)\leq_c(\Xi,\Theta)$, $\psi \in \Xi$, and $\chi\notin \Xi$. By Lemma \ref{L:global-functions}.2, we can choose a $\beta \in Seq$ such that $end(\beta) = (\Xi,\Theta)$, and a $F \in \mathfrak{G}$ such that $F(\alpha) = \beta$. But then we can set $\bar{G} := \bar{F}^\frown(F) \in Glob$; we clearly have $\bar{G}\sqsupseteq\bar{F}$, $\mathbb{F}_{\bar{F}\bar{G}}(\alpha) = F(\alpha) = \beta$, and $\pi^1(end(\beta)) = \Xi$. Therefore, under these settings, we also get that $\psi \in \pi^1(end(\mathbb{F}_{\bar{F}\bar{G}}(\alpha)))$, but $\chi\notin \pi^1(end(\mathbb{F}_{\bar{F}\bar{G}}(\alpha)))$. Thus we must have, by IH, that $\mathfrak{S}_c,\bar{G}\models_{fo} ST_x(\psi)[x/\mathbb{F}_{\bar{F}\bar{G}}(\alpha)]$ but  $\mathfrak{S}_c,\bar{G}\not\models_{fo} ST_x(\chi)[x/\mathbb{F}_{\bar{F}\bar{G}}(\alpha)]$. The latter means that $\mathfrak{S}_c,\bar{F}\not\models_{fo} (ST_x(\psi)\to ST_x(\chi))[x/\alpha]$, or, equivalently, that $\mathfrak{S}_c,\bar{F}\not\models_{fo} ST_x(\phi)[x/\alpha]$. 

\textit{Case 4}. $\phi = \psi\boxto\chi$. Then we have 
\begin{align*}
	ST_x(\phi) &:= \exists y(Sy \wedge (\forall z)_O(Ezy\leftrightarrow ST_z(\psi))\wedge\forall w(Rxyw\to ST_w(\chi))),
\end{align*}
and, by Lemma \ref{L:truth}, we know that:
\begin{align}
	\psi\boxto&\chi\in \pi^1(end(\alpha)) \Leftrightarrow \mathcal{M}_c, end(\alpha)\models_c \psi\boxto\chi\notag\\
	&\Leftrightarrow(\forall(\Xi,\Theta),(\Xi',\Theta')\in W_c)\notag\\
	&\qquad\quad(end(\alpha)\leq_c(\Xi,\Theta)\,\&\,R_c((\Xi,\Theta), \|\psi\|_{\mathcal{M}_c}, (\Xi',\Theta')) \Rightarrow \mathcal{M}_c, (\Xi',\Theta')\models_c \chi)\notag\\
	&\Leftrightarrow(\forall(\Xi,\Theta),(\Xi',\Theta')\in W_c)\notag\\
	&\qquad\quad(end(\alpha)\leq_c(\Xi,\Theta)\,\&\,R_c((\Xi,\Theta), \|\psi\|_{\mathcal{M}_c}, (\Xi',\Theta')) \Rightarrow \chi\in \Xi')\label{E:boxto}\tag{$\boxto$}
\end{align}
We now argue as follows:

$(\Leftarrow)$ Assume that $\psi\boxto\chi\in \pi^1(end(\alpha))$, let $\beta \in Seq$, and let
$\bar{G} \in Glob$ be such that $\bar{F}\mathrel{\sqsubseteq}\bar{G}$. If $(\mathbb{F}_{\bar{F}\bar{G}}(\alpha), [\psi]_{\sim},\beta) \in \iota(R)$, then we must have, by Definition \ref{D:canonical-sheaf}, that, for some $(\Gamma, \Delta)\in W_c$ and some $\theta\in [\psi]_{\sim}$, we have $\beta = \mathbb{F}_{\bar{F}\bar{G}}(\alpha)^\frown(\theta, (\Gamma, \Delta))$. But then, by Definition \ref{D:standard-sequence}, also $R_c(end(\mathbb{F}_{\bar{F}\bar{G}}(\alpha)), \|\theta\|_{\mathcal{M}_c}, (\Gamma, \Delta))$; and, since $\theta\in [\psi]_{\sim}$, Lemma \ref{L:truth} implies that $\|\theta\|_{\mathcal{M}_c} = \|\psi\|_{\mathcal{M}_c}$, so that $R_c(end(\mathbb{F}_{\bar{F}\bar{G}}(\alpha)), \|\psi\|_{\mathcal{M}_c}, (\Gamma, \Delta))$. By Lemma \ref{L:technical}.2, we must further have $end(\alpha)\leq_c end(\mathbb{F}_{\bar{F}\bar{G}}(\alpha))$, and now \eqref{E:boxto} yields that
 $\chi \in \Gamma = \pi^1(end(\beta))$. Therefore, it follows by IH that $\mathfrak{S}_c,\bar{G}\models_{fo} ST_w(\chi)[w/\beta]$.

Since the choice of $\beta \in Seq$ and 
$\bar{G} \in Glob$ such that $\bar{F}\mathrel{\sqsubseteq}\bar{G}$ was made arbitrarily, we have shown that $
\mathfrak{S}_c,\bar{F}\models_{fo} \forall w(Rxyw\to ST_w(\chi))[x/\alpha, y/[\psi]_{\sim}]$.
Moreover, IH for Part 2 implies that $
\mathfrak{S}_c,\bar{F}\models_{fo} Sy\wedge (\forall z)_O(Ezy\leftrightarrow ST_z(\psi)))[y/[\psi]_{\sim}]$. Summing up the two conjuncts and applying the existential generalization to $y$, we obtain that:
\begin{align*}
\mathfrak{S}_c,\bar{F}\models_{fo} \exists y(Sy \wedge (\forall z)_O(Ezy\leftrightarrow ST_z(\psi))\wedge\forall w(Rxyw\to ST_w(\chi)))[x/\alpha],
\end{align*}
or,in other words, that $\mathfrak{S}_c,\bar{F}\models_{fo} ST_x(\phi)[x/\alpha]$, as desired.

($\Rightarrow$) Arguing by contraposition, we assume that $\psi\boxto\chi\notin \pi^1(end(\alpha))$. In this case, \eqref{E:boxto} implies the existence of $(\Xi,\Theta),(\Xi',\Theta')\in W_c$ such that $end(\alpha)\leq_c(\Xi,\Theta)$, $R_c((\Xi,\Theta), \|\psi\|_{\mathcal{M}_c}, (\Xi',\Theta'))$, and $\chi\notin \Xi'$. By Lemma \ref{L:global-functions}.2, there exist a $\beta \in Seq(\Gamma, \Delta)$ such that $end(\beta) = (\Xi,\Theta)$ and an $F \in \mathfrak{G}$ such that $F(\alpha) = \beta$. But then clearly $\bar{G} = \bar{F}^\frown(F) \in Glob$ and $\bar{F}\mathrel{\sqsubseteq}\bar{G}$. Moreover, Definition \ref{D:canonical-sheaf} implies that $\mathbb{F}_{\bar{F}\bar{G}}(\alpha) = \beta$. Next, $R_c((\Xi,\Theta), \|\psi\|_{\mathcal{M}_c}, (\Xi',\Theta'))$ implies that $\gamma = \beta^\frown(\psi, (\Xi',\Theta')) \in Seq$, whence, by Definition \ref{D:canonical-sheaf}, $(\beta, [\psi]_\sim, \gamma)\in \iota(R)$. So, all in all we get that $
\mathfrak{S}_c,\bar{G}\models_{fo} Rxyw[x/\mathbb{F}_{\bar{F}\bar{G}}(\alpha), y/[\psi]_\sim, w/\gamma]$,
and, on the other hand, since $\chi\notin \Xi' = \pi^1(end(\gamma))$, IH for Part 1 implies that $
\mathfrak{S}_c,\bar{G}\not\models_{fo} ST_w(\chi)[w/\gamma]$.
Therefore, given that $\bar{F}\mathrel{\sqsubseteq}\bar{G}$, it follows that:
\begin{equation}\label{E:dagger}\tag{$\dagger$}
	\mathfrak{S}_c,\bar{F}\not\models_{fo} \forall w(Rxyw\to ST_w(\chi))[x/\alpha, y/[\psi]_\sim]
\end{equation}
If now $a \in \mathbb{A}$ is such that we have:
\begin{equation}\label{E:ddagger}\tag{$\ddagger$}
\mathfrak{S}_c,\bar{F}\models_{fo} Sy\wedge (\forall z)_O(Ezy\leftrightarrow ST_z(\psi))[y/a]	
\end{equation}
then note that, by IH for Part 2 we also have $
\mathfrak{S}_c,\bar{F}\models_{fo}Sy\wedge(\forall z)_O(Ezy\leftrightarrow ST_z(\psi))[y/[\psi]_\sim]$.
Therefore, Lemma \ref{L:indiscernibles} implies that we must have $a = [\psi]_\sim$ in this case. But then \eqref{E:dagger} implies that we must have
$\mathfrak{S}_c,\bar{F}\not\models_{fo} \forall w(Rxyw\to ST_w(\chi))[x/\alpha, y/a]$.
Since $a$ was chosen in $\mathbb{A}$ arbitrarily under the condition given by \eqref{E:ddagger}, we have shown that
\begin{align*}
	\mathfrak{S}_c,\bar{F}\not\models_{fo} \exists y(Sy \wedge (\forall z)_O(Ezy\leftrightarrow ST_z(\psi))\wedge\forall w(Rxyw\to ST_w(\chi)))[x/\alpha],
\end{align*}
or, in other words, that $\mathfrak{S}_c,\bar{F}\not\models_{fo} ST_x(\phi)[x/\alpha]$, as desired.

\textit{Case 5}. $\phi = \psi\diamondto\chi$. Then we have 
\begin{align*}
	ST_x(\phi) &:= \exists y(Sy \wedge (\forall z)_O(Ezy\leftrightarrow ST_z(\psi))\wedge\exists w(Rxyw\wedge ST_w(\chi))),
\end{align*}
And, by Lemma \ref{L:truth}, we know that:
\begin{align}
	\psi\diamondto&\chi\in \pi^1(end(\alpha)) \Leftrightarrow \mathcal{M}_c, end(\alpha)\models_c \psi\diamondto\chi\notag\\
	&\Leftrightarrow(\exists(\Xi,\Theta)\in W_c)(R_c(end(\alpha), \|\psi\|_{\mathcal{M}_c}, (\Xi,\Theta))\,\&\,\mathcal{M}_c, (\Xi,\Theta)\models_c \chi)\notag\\
	&\Leftrightarrow(\exists(\Xi,\Theta)\in W_c)(R_c(end(\alpha), \|\psi\|_{\mathcal{M}_c}, (\Xi,\Theta))\,\&\,\chi\in \Xi)\label{E:diamondto}\tag{$\diamondto$}
\end{align}
We now argue as follows:

$(\Leftarrow)$ Assume that $\psi\diamondto\chi\in \pi^1(end(\alpha))$. Then, by \eqref{E:diamondto}, we can choose a $(\Xi,\Theta)\in W_c$ such that both $R_c(end(\alpha), \|\psi\|_{\mathcal{M}_c}, (\Xi,\Theta))$ and $\chi\in \Xi$. The former means, by Definition \ref{D:standard-sequence}, that $\beta = \alpha^\frown(\psi, (\Xi,\Theta)) \in Seq$, and the latter means that $\chi \in \pi^1(end(\beta))$ whence, by IH for Part 1, it follows that $\mathfrak{S}_c,\bar{F}\models_{fo} ST_w(\chi)[w/\beta]$.

Finally, note that Definition \ref{D:canonical-sheaf} implies that $\mathfrak{S}_c,\bar{F}\models_{fo} Rxyw(\phi)[x/\alpha, y/[\psi]_\sim, w/\beta]$ and that, by IH for Part 2 we have $\mathfrak{S}_c,\bar{F}\models_{fo}Sy\wedge(\forall z)_O (Ezy\leftrightarrow ST_z(\psi))[y/[\psi]_\sim]$. 
Summing everything up, we have thus shown that
\begin{align*}
	\mathfrak{S}_c,\bar{F}\models_{fo} \exists y(Sy \wedge (\forall z)_O(Ezy\leftrightarrow ST_z(\psi))\wedge\exists w(Rxyw\wedge ST_w(\chi)))[x/\alpha],
\end{align*}
in other words, we have shown that $\mathfrak{S}_c,\bar{F}\models_{fo} ST_x(\phi)[x/\alpha]$, as desired.

($\Rightarrow$) Again, we argue by contraposition. Assume that $\psi\diamondto\chi\notin \pi^1(end(\alpha))$. In this case, \eqref{E:diamondto} implies that for every $(\Xi,\Theta)\in W_c$ such that $R_c(end(\alpha), \|\psi\|_{\mathcal{M}_c}, (\Xi,\Theta))$, we must have $\chi\notin \Xi$. Now, if $\beta \in Seq$ is such that $(\alpha, [\psi]_\sim, \beta) \in \iota(R)$, then, by Definition \ref{D:canonical-sheaf}, there must exist a $(\Gamma, \Delta) \in W_c$ and a $\theta \in [\psi]_\sim$ such that $\beta = \alpha^\frown(\theta, (\Gamma, \Delta))$. Since $\beta \in Seq$, Definition \ref{D:standard-sequence} implies that $R_c(end(\alpha), \|\theta\|_{\mathcal{M}_c}, (\Gamma, \Delta))$; but, since $\theta \in [\psi]_\sim$, the latter means that we must have $R_c(end(\alpha), \|\psi\|_{\mathcal{M}_c}, (\Gamma, \Delta))$. But then our assumption implies that $\chi\notin \Gamma = \pi^1(end(\beta))$, whence, by IH for Part 1, we must have $\mathfrak{S}_c,\bar{F}\not\models_{fo} ST_w(\chi)[w/\beta]$. Since $\beta$ was chosen in $Seq$ arbitrarily under the condition that $(\alpha, [\psi]_\sim, \beta) \in \iota(R)$, we have shown that:
\begin{equation}\label{E:natural}\tag{$\natural$}
	\mathfrak{S}_c,\bar{F}\not\models_{fo}\exists w(Rxyw \wedge ST_w(\chi))[x/\alpha, y/[\psi]_\sim]
\end{equation}
Next, suppose that $a \in \mathbb{A}$ is such that we have:
\begin{equation}\label{E:sharp}\tag{$\sharp$}
	\mathfrak{S}_c,\bar{F}\models_{fo} Sy\wedge (\forall z)_O(Ezy\leftrightarrow ST_z(\psi))[y/a]	
\end{equation}
then note that, by IH for Part 2 we have $
\mathfrak{S}_c,\bar{F}\models_{fo}Sy\wedge(\forall z)_O(Ezy\leftrightarrow ST_z(\psi))[y/[\psi]_\sim]$.
Therefore, Lemma \ref{L:indiscernibles} implies that we must have $a = [\psi]_\sim$ in this case. But then \eqref{E:natural} implies that $\mathfrak{S}_c,\bar{F}\not\models_{fo} \exists w(Rxyw \wedge ST_w(\chi))[x/\alpha, y/a]$.
Since $a$ was chosen in $\mathbb{A}$ arbitrarily under the condition given by \eqref{E:sharp}, we have shown that
\begin{align*}
	\mathfrak{S}_c,\bar{F}\not\models_{fo} \exists y(Sy \wedge (\forall z)_O(Ezy\leftrightarrow ST_z(\psi))\wedge\exists w(Rxyw \wedge ST_w(\chi)))[x/\alpha],
\end{align*}
or, in other words, that $\mathfrak{S}_c,\bar{F}\not\models_{fo} ST_x(\phi)[x/\alpha]$, as desired.
\end{proof}
Before we move on, we would like to draw a corollary from our lemma:
\begin{corollary}\label{C:simplified}
Let $\bar{F} \in Glob$, $\alpha \in Seq$, $x \in Ind$, and let $\psi,\chi\in \mathcal{L}$. Then the following statements hold:
\begin{enumerate}
	\item $\mathfrak{S}_c,\bar{F} \models_{fo} ST_x(\phi\boxto\psi)[x/\alpha] \Leftrightarrow \mathfrak{S}_c,\bar{F} \models_{fo} \forall w(Rxyw\to ST_w(\chi))[x/\alpha, y/[\psi]_\sim]$.
	
	\item $\mathfrak{S}_c,\bar{F} \models_{fo} ST_x(\phi\diamondto\psi)[x/\alpha] \Leftrightarrow \mathfrak{S}_c,\bar{F} \models_{fo} \exists w(Rxyw \wedge ST_w(\chi))[x/\alpha, y/[\psi]_\sim]$.
\end{enumerate}			
\end{corollary}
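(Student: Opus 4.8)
The plan is to read the right-hand sides as the \emph{bodies} of the existential standard translations with the existentially quantified `set' variable $y$ already instantiated by its canonical witness $[\psi]_\sim$ (I read the left-hand sides as $ST_x(\psi\boxto\chi)$ and $ST_x(\psi\diamondto\chi)$, matching the bodies $\forall w(Rxyw\to ST_w(\chi))$ and $\exists w(Rxyw\wedge ST_w(\chi))$). Unfolding the definition of $ST_x$, the only difference between the two sides of each biconditional is that on the left $y$ is bound by $\exists y$ under the two leading conjuncts $Sy$ and $(\forall z)_O(Ezy\leftrightarrow ST_z(\psi))$, whereas on the right $y$ is fixed to $[\psi]_\sim$ and those conjuncts are dropped. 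So the corollary reduces to the claim that, at the evaluation point $(\bar{F},\alpha)$, a `set' satisfying $Sy\wedge(\forall z)_O(Ezy\leftrightarrow ST_z(\psi))$ \emph{exists and is unique}, and that this unique witness is exactly $[\psi]_\sim$. Existence of $[\psi]_\sim$ as such a witness is Lemma~\ref{L:sheaf-truth}.2, and uniqueness will come from Lemma~\ref{L:indiscernibles}.

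For the $(\Leftarrow)$ direction I would simply perform an existential generalization. Assuming the right-hand side holds, Lemma~\ref{L:sheaf-truth}.2 supplies $\mathfrak{S}_c,\bar{F}\models_{fo} Sy\wedge(\forall z)_O(Ezy\leftrightarrow ST_z(\psi))[y/[\psi]_\sim]$; conjoining this with the assumed body $\forall w(Rxyw\to ST_w(\chi))[x/\alpha,y/[\psi]_\sim]$ (resp.\ $\exists w(Rxyw\wedge ST_w(\chi))$) and applying the existential clause of $\models_{fo}$ with $[\psi]_\sim\in\mathbb{A}$ as the witness for $y$ yields $\mathfrak{S}_c,\bar{F}\models_{fo} ST_x(\psi\boxto\chi)[x/\alpha]$ (resp.\ the $\diamondto$ formula). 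This direction needs no appeal to \eqref{E:th12}.

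For the $(\Rightarrow)$ direction I would extract a witness and then collapse it to $[\psi]_\sim$. If the left-hand side holds, the existential clause gives some $a\in\mathbb{A}$ with $\mathfrak{S}_c,\bar{F}\models_{fo} Sy\wedge(\forall z)_O(Ezy\leftrightarrow ST_z(\psi))[y/a]$ together with the body $\forall w(Rxyw\to ST_w(\chi))[x/\alpha,y/a]$. The key step is $a=[\psi]_\sim$. To get this I would combine the elementhood biconditional for $a$ with the one for $[\psi]_\sim$ from Lemma~\ref{L:sheaf-truth}.2, obtaining $\mathfrak{S}_c,\bar{F}\models_{fo}(\forall z)_O(Eza\leftrightarrow Ez[\psi]_\sim)$, and then feed $Sa$, $S[\psi]_\sim$ and this common elementhood into the instance of \eqref{E:th12} validated by Lemma~\ref{L:indiscernibles} to conclude $a\equiv[\psi]_\sim$, i.e.\ $a=[\psi]_\sim$ by the semantic clause for $\equiv$. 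Once $a=[\psi]_\sim$ is established, the body for $a$ is literally the right-hand side, and we are done.

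The only mildly delicate point---and the place I expect to have to be careful rather than the source of any real difficulty---is the combination of the two biconditionals $(\forall z)_O(Eza\leftrightarrow ST_z(\psi))$ and $(\forall z)_O(Ez[\psi]_\sim\leftrightarrow ST_z(\psi))$ into $(\forall z)_O(Eza\leftrightarrow Ez[\psi]_\sim)$. Since these are all evaluated at the single node $\bar{F}$ under one assignment, and transitivity of $\leftrightarrow$ under $(\forall z)_O$ is an intuitionistic validity (hence holds in the sheaf semantics of $\mathsf{FOIL}$), this step goes through; and in any case the ambient metatheory is classical, so reasoning about $\models_{fo}$ at a fixed point is unproblematic. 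The $\diamondto$ case is verbatim the $\boxto$ case with $\forall w(Rxyw\to\cdots)$ replaced by $\exists w(Rxyw\wedge\cdots)$, since the witnessing and uniqueness argument for $y$ is entirely independent of the final conjunct.
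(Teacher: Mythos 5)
Your proposal is correct and follows essentially the same route as the paper: the ($\Leftarrow$) direction is exactly the paper's appeal to Lemma \ref{L:sheaf-truth}.2 plus existential generalization, and the ($\Rightarrow$) direction extracts the existential witness $a$ and collapses it to $[\psi]_\sim$ via Lemmas \ref{L:indiscernibles} and \ref{L:sheaf-truth}.2, which is precisely the paper's argument (you merely spell out the extensionality step that the paper leaves implicit). Your reading of the statement's mismatched metavariables ($ST_x(\psi\boxto\chi)$ with body $\forall w(Rxyw\to ST_w(\chi))$ and witness $[\psi]_\sim$) is also the intended one.
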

\begin{proof}
	(Part 1) ($\Leftarrow$) Trivial by Lemma \ref{L:sheaf-truth}.2.
	
	($\Rightarrow$) If $\mathfrak{S}_c,\bar{F} \models_{fo} ST_x(\phi\boxto\psi)[x/\alpha]$, then we can choose an $a \in \mathbb{A}$ such that:
	\begin{align*}
		\mathfrak{S}_c,\bar{F}\models_{fo} Sy \wedge (\forall z)_O(Ezy\leftrightarrow ST_z(\psi))\wedge\forall w(Rxyw\to ST_w(\chi))[x/\alpha, y/a]
	\end{align*}
	But then, by Lemmas \ref{L:indiscernibles} and \ref{L:sheaf-truth}.2, we must have $a = [\psi]_\sim$, so that $\mathfrak{S}_c,\bar{F} \models_{fo} \forall w(R(x,y,w)\to ST_w(\chi))[x/\alpha, y/[\psi]_\sim]$ follows. Part 2 is proved similarly to Part 1.	
\end{proof}
It only remains now to show that $\mathfrak{S}_c$ is a model of $Th$:
\begin{lemma}\label{L:th}
	For every $\bar{F} \in Glob$, we have $\mathfrak{S}_c, \bar{F}\models_{fo} Th$.
\end{lemma}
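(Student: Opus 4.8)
The plan is to check that each instance of the schemes \eqref{E:th1}--\eqref{E:th12} is forced at an arbitrary $\bar{F} \in Glob$. The case \eqref{E:th12} is already settled by Lemma \ref{L:indiscernibles}, so it suffices to treat the remaining eleven schemes, which I would group according to the tools they require.

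First, the purely sortal axioms \eqref{E:th1}--\eqref{E:th5} I would dispatch directly from Definition \ref{D:canonical-sheaf}. Since the interpretation $\iota$ is literally the same at every node and the domain $\mathbb{A} = Seq \cup \{[\phi]_\sim \mid \phi \in \mathcal{L}\}$ decomposes as the \emph{disjoint} union $\iota(O) \cup \iota(S)$, the disjunction \eqref{E:th1} and the incompatibility \eqref{E:th2} are immediate (this disjointness also supplies persistence for free, so the intuitionistic and classical readings coincide here). Likewise the three containments $\iota(p) \subseteq \iota(O)$, $\iota(E) \subseteq \iota(O) \times \iota(S)$, and $\iota(R) \subseteq \iota(O) \times \iota(S) \times \iota(O)$ that are built into Definition \ref{D:canonical-sheaf} yield \eqref{E:th3}, \eqref{E:th4}, and \eqref{E:th5}, respectively.

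Second, the existence axioms \eqref{E:th6}--\eqref{E:th11} all follow one template: exhibit an explicit set-class as the existential witness and read off the required biconditional from the truth lemma (Lemma \ref{L:sheaf-truth}.2). Concretely I would take the witness $[\theta]_\sim$ to be $[p]_\sim$, $[\top]_\sim$, $[\bot]_\sim$, $[\psi \ast \chi]_\sim$, $[\psi \boxto \chi]_\sim$, and $[\psi \diamondto \chi]_\sim$ for \eqref{E:th6}, \eqref{E:th7}, \eqref{E:th8}, \eqref{E:th9}, \eqref{E:th10}, and \eqref{E:th11}, respectively (using, when $Sx \wedge Sy$ is assumed, that every element of $\iota(S)$ is of the form $[\psi]_\sim$). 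In each case Lemma \ref{L:sheaf-truth}.2 gives $(\forall w)_O(Ewz\leftrightarrow ST_w(\theta))$ at $z/[\theta]_\sim$, and it then remains only to rewrite $ST_w(\theta)$ into the shape demanded by the axiom. For \eqref{E:th6}--\eqref{E:th9} this is immediate from the recursive clauses of $ST$; for \eqref{E:th9} one combines the three biconditionals $Ewx\leftrightarrow ST_w(\psi)$, $Ewy\leftrightarrow ST_w(\chi)$, and $Ewz\leftrightarrow ST_w(\psi)\ast ST_w(\chi)$ under $\ast$, which is an intuitionistically valid manipulation for every $\ast\in\{\wedge,\vee,\to\}$. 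For \eqref{E:th10} and \eqref{E:th11} I would additionally invoke Corollary \ref{C:simplified} to replace $ST_w(\psi\boxto\chi)$ (resp. $ST_w(\psi\diamondto\chi)$) by $\forall u(Rwxu \to ST_u(\chi))$ (resp. $\exists u(Rwxu \wedge ST_u(\chi))$), and then use Lemma \ref{L:sheaf-truth}.2 for $\chi$ to turn $ST_u(\chi)$ into $Euy$; here the fact that the third coordinate of $\iota(R)$ lies in $\iota(O) = Seq$ guarantees that $u$ effectively ranges over $O$, so the biconditional for $\chi$ applies. A small gap to flag is that the truth lemma produces only the $O$-restricted biconditionals, whereas \eqref{E:th6} asserts an unrestricted $\forall y$; this is closed by noting that for $y \in \iota(S)$ both sides ($Eyx$ and $py$) are false, since $\iota(E)\subseteq \iota(O)\times\iota(S)$ and $\iota(p)\subseteq\iota(O)$.

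The main point to watch, and where I expect the only genuine friction, is that all of this must hold \emph{intuitionistically} at $\bar{F}$, i.e. persistently along $\sqsubseteq$. Here the decisive structural feature is that $\mathcal{A}$ is the constant model across all of $Glob$ and that the transition maps $\mathbb{F}_{\bar{F}\bar{G}}$ fix every set-class while sending each sequence to a $\prec$-extension (Lemma \ref{L:technical}); consequently the chosen witnesses $[\theta]_\sim$ are stable under passage to larger global sequences, and the persistence needed for the implications and universals hidden inside each $\leftrightarrow$ is already absorbed into the statement of Lemma \ref{L:sheaf-truth}, which is itself phrased in terms of $\models_{fo}$. Thus, once the witnesses are named, no intuitionistic bookkeeping beyond appealing to Lemmas \ref{L:sheaf-truth} and \ref{L:indiscernibles} and Corollary \ref{C:simplified} should be required.
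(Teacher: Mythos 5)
Your proposal is correct and follows essentially the same route as the paper's proof: \eqref{E:th1}--\eqref{E:th5} read off from Definition \ref{D:canonical-sheaf}, \eqref{E:th12} from Lemma \ref{L:indiscernibles}, and \eqref{E:th6}--\eqref{E:th11} by exhibiting exactly the same witnesses $[p]_\sim$, $[\top]_\sim$, $[\bot]_\sim$, $[\psi\ast\chi]_\sim$, $[\psi\boxto\chi]_\sim$, $[\psi\diamondto\chi]_\sim$ via Lemma \ref{L:sheaf-truth}.2, with Corollary \ref{C:simplified} handling \eqref{E:th10} and \eqref{E:th11}. In fact you make explicit two small points the paper leaves implicit (the unrestricted $\forall y$ in \eqref{E:th6}/\eqref{E:th8} versus the $O$-guarded biconditional of the truth lemma, and the final replacement of $ST_u(\chi)$ by $Euy$ inside the guarded quantifier), and your fixes for both are sound.
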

\begin{proof}
	That the Kripke sheaf $\mathfrak{S}_c$ must satisfy \eqref{E:th1}--\eqref{E:th5} is clear from Definition \ref{D:canonical-sheaf}. The satisfaction of \eqref{E:th6}--\eqref{E:th8} is implied by Lemma \ref{L:sheaf-truth}.2 (one needs to instantiate $y$ by $[p]_\sim$, $[\top]_\sim$, or $[\bot]_\sim$, respectively). The satisfaction of \eqref{E:th12} was shown in Lemma \ref{L:indiscernibles}. We consider the remaining parts of $Th$ in more detail below:
	
	\eqref{E:th9}. Let $\ast \in \{\wedge, \vee, \to\}$, $\bar{F} \in Glob$, and $a, b \in \iota(S)$. Then, by Definition \ref{D:canonical-sheaf}, there must exist some $\phi, \psi \in \mathcal{L}$ such that $a = [\phi]_\sim$ and $b = [\psi]_\sim$. But then Lemma \ref{L:sheaf-truth}.2 implies that we have all of the following:
	\begin{align}
		\mathfrak{S}_c, \bar{F}&\models_{fo} Sx \wedge (\forall w)_O(Ewx\leftrightarrow ST_w(\phi))[x/a]\label{E:phi}\tag{$\S$}\\
		\mathfrak{S}_c, \bar{F}&\models_{fo} Sy \wedge (\forall w)_O(Ewy\leftrightarrow ST_w(\psi))[y/b]\label{E:psi}\tag{$\P$}\\
		\mathfrak{S}_c, \bar{F}&\models_{fo} Sz \wedge (\forall w)_O(Ewz\leftrightarrow ST_w(\phi)\ast ST_w(\psi))[z/[\phi\ast\psi]_\sim]\notag
	\end{align}
	whence $\mathfrak{S}_c, \bar{F}\models_{fo} \eqref{E:th9}$ clearly follows.
	
	\eqref{E:th10}. Again, let $\bar{F} \in Glob$ and $a, b \in \iota(S)$. Then let $\phi, \psi \in \mathcal{L}$ be such that $a = [\phi]_\sim$ and $b = [\psi]_\sim$. Lemma \ref{L:sheaf-truth}.2 implies that both \eqref{E:phi} and \eqref{E:psi} hold, and that we have:
	$$
	\mathfrak{S}_c, \bar{F}\models_{fo} Sz \wedge (\forall w)_O(Ewz\leftrightarrow ST_w(\phi\boxto\psi))[z/[\phi\boxto\psi]_\sim].
	$$
	By Corollary \ref{C:simplified}, it follows now that we must have:
	$$
	\mathfrak{S}_c, \bar{F}\models_{fo} Sz \wedge (\forall w)_O(Ewz\leftrightarrow \forall w'(Rwxw'\to ST_{w'}(\chi)))[x/a,z/[\phi\boxto\psi]_\sim].
	$$
	whence $\mathfrak{S}_c, \bar{F}\models_{fo} \eqref{E:th10}$ clearly follows. The case of \eqref{E:th11} is parallel to the case of \eqref{E:th10}.
\end{proof}
We are now finally in a position to prove a converse to Corollary \ref{C:easy}:
\begin{proposition}\label{P:hard}
	For all $\Gamma, \Delta \subseteq \mathcal{L}$ and for every $x \in Ind$, if $Th, \{ST_x(\phi)\mid \phi \in \Gamma\}\models_{fo}\{ST_x(\psi)\mid \psi \in \Delta\}$, then  $\Gamma \models_{\mathsf{IntCK}} \Delta$.
\end{proposition}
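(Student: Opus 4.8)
The plan is to argue by contraposition, extracting a first-order countermodel from the canonical sheaf $\mathfrak{S}_c$ whenever $\Gamma \not\models_{\mathsf{IntCK}} \Delta$. So I would suppose $\Gamma \not\models_{\mathsf{IntCK}} \Delta$. By definition this means that $(\Gamma, \Delta)$ is satisfiable, hence, by Theorem \ref{T:completeness}, consistent. I would then invoke Lemma \ref{L:lindenbaum} to extend $(\Gamma, \Delta)$ to a maximal bi-set $(\Gamma^*, \Delta^*) \in W_c$, so that $\Gamma \subseteq \Gamma^*$ and $\Delta \subseteq \Delta^*$.

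The key idea is that this single maximal bi-set already furnishes the value for the free variable $x$. Indeed, the one-term sequence $\alpha := ((\Gamma^*, \Delta^*))$ is vacuously a standard sequence (Definition \ref{D:standard-sequence} with $n = 0$), so $\alpha \in Seq \subseteq \mathbb{A}$ and $\pi^1(end(\alpha)) = \Gamma^*$. I would pick any world $\bar{F} \in Glob$ (for instance $\bar{F} = \Lambda$) and form the $1$-evaluation point $(\mathfrak{S}_c, \bar{F}, \alpha)$, reading $\alpha$ as the value assigned to $x$.

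It then remains to check three things about this evaluation point, each immediate from the machinery already in place. First, $\mathfrak{S}_c, \bar{F} \models_{fo} Th$ holds by Lemma \ref{L:th}. Second, for every $\phi \in \Gamma$ we have $\phi \in \Gamma^* = \pi^1(end(\alpha))$, so Lemma \ref{L:sheaf-truth}.1 gives $\mathfrak{S}_c, \bar{F} \models_{fo} ST_x(\phi)[x/\alpha]$. Third, for every $\psi \in \Delta \subseteq \Delta^*$ the consistency of $(\Gamma^*, \Delta^*)$ forces $\psi \notin \Gamma^*$ (otherwise $\Gamma^* \vdash \psi$ together with $\psi \in \Delta^*$ would contradict consistency), whence $\psi \notin \pi^1(end(\alpha))$ and Lemma \ref{L:sheaf-truth}.1 yields $\mathfrak{S}_c, \bar{F} \not\models_{fo} ST_x(\psi)[x/\alpha]$. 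Thus $(\mathfrak{S}_c, \bar{F}, \alpha)$ satisfies $Th$ and every $ST_x(\phi)$ with $\phi \in \Gamma$ but no $ST_x(\psi)$ with $\psi \in \Delta$, which is precisely a witness that $Th, \{ST_x(\phi)\mid \phi \in \Gamma\}\not\models_{fo}\{ST_x(\psi)\mid \psi \in \Delta\}$, completing the contrapositive.

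Given the amount of preparatory work, there is no genuine obstacle left in this last step: all the difficulty has been absorbed into constructing $\mathfrak{S}_c$, verifying that it is a Kripke sheaf (Lemma \ref{L:canonical-sheaf}), establishing the truth lemma (Lemma \ref{L:sheaf-truth}), and checking $\mathfrak{S}_c \models_{fo} Th$ (Lemma \ref{L:th}). The only point requiring a moment's care is recognizing that a length-one standard sequence suffices as the value of $x$, so that the first-order evaluation point can be taken to sit over the very maximal bi-set delivered by the Lindenbaum construction; once that is seen, the three verifications are one-line appeals to the lemmas above.
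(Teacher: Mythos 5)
Your proposal is correct and follows essentially the same route as the paper's own proof: contraposition, Lindenbaum extension of $(\Gamma,\Delta)$ to a maximal bi-set, viewing that bi-set as a length-one standard sequence in $Seq\subseteq\mathbb{A}$, and evaluating at the world $\Lambda$ via Lemma \ref{L:th} and Lemma \ref{L:sheaf-truth}.1. Your write-up is in fact slightly more explicit than the paper's (which leaves implicit both the appeal to Theorem \ref{T:completeness} for passing from satisfiability to consistency and the argument that $\psi\in\Delta$ forces $\psi\notin\Gamma^*$), but these are presentational differences only.
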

\begin{proof}
	We argue by contraposition. If $\Gamma \not\models_{\mathsf{IntCK}} \Delta$, then $(\Gamma, \Delta)$ must be satisfiable, and therefore, by Lemma \ref{L:lindenbaum}, we can choose a $(\Gamma', \Delta')\in W_c$ such that $(\Gamma', \Delta')\supseteq (\Gamma, \Delta)$. By Definition \ref{D:standard-sequence}, we have $(\Gamma', \Delta')\in Seq$, therefore, Lemma \ref{L:sheaf-truth}.1 and Lemma \ref{L:th} together imply that:
	$$
	\mathfrak{S}_c, \Lambda \models_{fo} (Th \cup \{ST_x(\phi)\mid \phi \in \Gamma\}, \{ST_x(\psi)\mid \psi \in \Delta\})[x/(\Gamma', \Delta')],
	$$ 
	or, equivalently, that $Th, \{ST_x(\phi)\mid \phi \in \Gamma\}\not\models_{fo}\{ST_x(\psi)\mid \psi \in \Delta\}$, as desired.
\end{proof}
We can now formulate and prove the main result of this subsection:
\begin{theorem}\label{T:foil}
	For all $\Gamma, \Delta \subseteq \mathcal{L}$ and for every $x \in Ind$, we have $\Gamma \models_{\mathsf{IntCK}} \Delta$ iff $Th, \{ST_x(\phi)\mid \phi \in \Gamma\}\models_{fo}\{ST_x(\psi)\mid \psi \in \Delta\}$.
\end{theorem}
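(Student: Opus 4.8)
The plan is to observe that the biconditional we want simply packages together the two one-directional results that have already been established, so that the proof reduces to citing them. For the left-to-right implication, I would suppose $\Gamma \models_{\mathsf{IntCK}} \Delta$; this is precisely the hypothesis of Corollary \ref{C:easy}, which delivers $Th, \{ST_x(\phi)\mid \phi \in \Gamma\}\models_{fo}\{ST_x(\psi)\mid \psi \in \Delta\}$ directly. For the right-to-left implication, I would invoke Proposition \ref{P:hard}, whose statement is exactly the converse: from the first-order consequence it concludes $\Gamma \models_{\mathsf{IntCK}} \Delta$. Conjoining the two yields the desired equivalence for every $\Gamma, \Delta \subseteq \mathcal{L}$ and every $x \in Ind$.

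Since the theorem is a mere bookkeeping step, the genuine difficulty lies upstream, in the two results being combined, and it is worth recalling where the work actually happened. The soundness direction (Corollary \ref{C:easy}) rests on Proposition \ref{P:easy}---that provability in $\mathsf{IntCK}$ is faithfully transported by the standard translation into $Th$-consequence---together with the compactness of $\mathsf{FOIL}$ and of $\mathsf{IntCK}$ (Corollary \ref{C:compactness}), which lets one pass from a finite unsatisfiable sub-bi-set to a single implication $\bigwedge\Gamma'\to\bigvee\Delta'$ and translate it. The completeness direction (Proposition \ref{P:hard}) is the hard part: it is proved by contraposition, extending a satisfiable bi-set $(\Gamma,\Delta)$ to a maximal element of $W_c$ and then exhibiting the canonical Kripke sheaf $\mathfrak{S}_c$ of Definition \ref{D:canonical-sheaf} as a $Th$-model (Lemma \ref{L:th}) in which the standard translation tracks membership in the worlds of $\mathcal{M}_c$ via the sheaf truth lemma (Lemma \ref{L:sheaf-truth}).

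I therefore expect the proof of the theorem itself to be a single short paragraph: invoke Corollary \ref{C:easy} for the one direction and Proposition \ref{P:hard} for the other, and note that together they give the stated equivalence. No new construction or estimate is needed at this stage; all of the machinery---the standard and global sequences, the local and global choice functions, and the induced sheaf $\mathfrak{S}_c$---has already been assembled precisely so that Proposition \ref{P:hard} goes through, and this final statement is their immediate corollary. The only ``obstacle'' worth flagging is purely expository, namely being careful that the two cited results are quantified over the same $\Gamma$, $\Delta$, and $x$, which they are.
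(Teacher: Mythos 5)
Your proposal is correct and coincides exactly with the paper's own proof, which reads simply ``By Corollary \ref{C:easy} and Proposition \ref{P:hard}.'' Your accompanying remarks about where the real work lies (Proposition \ref{P:easy} plus compactness for one direction, and the canonical sheaf $\mathfrak{S}_c$ with Lemmas \ref{L:sheaf-truth} and \ref{L:th} for the other) accurately reflect the structure of the upstream arguments.
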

\begin{proof}
	By Corollary \ref{C:easy} and Proposition \ref{P:hard}.
\end{proof}

\section{Conclusion, discussion, and future work}\label{S:conclusion}
We have shown that $\mathsf{IntCK}$ is indeed the correct version of basic intuitionistic conditional logic in the sense outlined in the opening paragraphs of this paper. Thus, Theorem \ref{T:completeness} shows that $\mathsf{IntCK}$ is basic in the sense that it is strongly complete relative to a (suitably defined) universal class of Kripke models; Theorem \ref{T:foil} then shows that $\mathsf{IntCK}$ is intuitionistic in the sense that it is strongly complete relative to an intuitionistic reading of the classical semantics of conditional logic. Finally, $\mathsf{IntCK}$ is fully conditional in that it features the full set of conditional connectives $\{\boxto, \diamondto\}$ which are not definable in terms of one another.

It seems that the construction of $\mathfrak{S}_c$ used in the proof of Theorem \ref{T:foil} is relatively novel, since similar results for intuitionistic modal logic are proved by other methods; in particular, \cite{simpson} proceeds proof-theoretically whereas \cite{ewald} uses the method of selective filtration forming a countable chain of finite models. The latter method was not very convenient to use in the case of conditional logic since one has to keep a supply of counterexamples distinguishing modal accessibility relations induced by formulas that fail to be provable complete.

Our answer to the question of what is the basic intuitionistic conditional logic is still open to criticism, mainly in relation to the intuitionistic component of our claims. We would like to briefly mention here two possible counter-arguments. First, despite the fact that Theorem \ref{T:foil} shows that the reasoning given in $\mathsf{IntCK}$ is but a subsystem of the first-order intuitionistic reasoning and can be embedded into the latter by the same sort of a standard translation that is also appropriate in the classical case, the fact that our proof of Theorem \ref{T:foil} is itself decidedly classical, diminishes the foundational importance of this result in the eyes of an intuitionist. Secondly, the theory $Th$ used in this result is open to doubts as to whether it smuggles in too much of a classical set-theoretic principles to be acceptable for an intuitionist.

As for the second concern, we note that \eqref{E:th1}--\eqref{E:th5} are clearly harmless principles typical for two-sorted formulations of $\mathsf{FOIL}$, and \eqref{E:th12} is a form of extensionality axiom; the latter is generally uncontroversial and present in every known form of constructive set theory. Finally, \eqref{E:th6}--\eqref{E:th11} are particular forms of comprehension. Even though the question about the intuitionistically acceptable amount of comprehension is definitely open and contested, the comprehension principles given in \eqref{E:th6}--\eqref{E:th11} all seem to be very tame in that they only use the formulas with guarded quantifiers over the object sort. It seems reasonable to expect, therefore, that they will be acceptable under any of the existing accounts of intuitionistc set theory.

As for the first concern, however, we can only acknowledge it as a drawback of our work; to do better in this respect, one should rather prove Theorem \ref{T:foil} in the spirit of \cite[Ch. 5]{simpson}, and we hope that we will be able to publish in the near future some sort of continuation to the present paper in which we will close this gap.

Another major direction for future work is to extend the methods and results of this paper to the treatment of conditionals in constructive logics with strong negation, for example, to Nelson's logics $\mathsf{N3}$ and $\mathsf{N4}$, and to the negation-inconsistent connexive logic $\mathsf{C}$ introduced by H. Wansing in \cite{w}. Among these three systems, $\mathsf{C}$ looks, perhaps, the most promising one, given that this subject already has seen its first rather intriguing steps in \cite{wu}, and the methods of the current paper seem to open a way to a considerable refinement of these first results.

\textbf{Acknowledgements}. This research has received funding from the European Research Council (ERC) under the European Union's Horizon 2020 research and innovation programme, grant agreement ERC-2020-ADG, 101018280, ConLog.

\appendix

\section{Proof of Lemma \ref{L:theorems}}\label{A:1}
We sketch the respective proofs and derivations in $\mathbb{ICK}$:
\begin{align}
	\eqref{E:Rnec}:\qquad\phi\label{E:p1}& &&\text{premise}\\
	\phi& \leftrightarrow \top\label{E:p3}&&\text{\eqref{E:p1}, \eqref{E:a0}, \eqref{E:mp}}\\
	(\psi&\boxto\phi)\leftrightarrow(\psi\boxto\top)\label{E:p4}&&\text{\eqref{E:p3}, \eqref{E:RCbox}}\\
	\psi&\boxto\phi\label{E:p5}&&\text{\eqref{E:p4}, \eqref{E:a6},  \eqref{E:a0}, \eqref{E:mp}}
\end{align}
\begin{align}
	\eqref{E:Rmbox}:\qquad\phi&\to\psi\label{E:p6} &&\text{premise}\\
	(\phi&\wedge\psi)\leftrightarrow \phi\label{E:p7}&&\text{\eqref{E:p6},\eqref{E:a0}, \eqref{E:mp}}\\
	(\chi&\boxto(\phi\wedge\psi))\leftrightarrow(\chi\boxto\phi)\label{E:p8}&&\text{\eqref{E:p7}, \eqref{E:RCbox}}\\
	(\chi&\boxto\phi)\to(\chi\boxto\psi)\label{E:p10}&&\text{\eqref{E:p8},  \eqref{E:a1}, \eqref{E:a0}, \eqref{E:mp}}
\end{align}
\begin{align}
	\eqref{E:Rmdiam}:\quad\phi&\to\psi\label{E:q0} &&\text{premise}\\
	(\phi&\vee\psi)\leftrightarrow \psi\label{E:q1}&&\text{\eqref{E:q0},\eqref{E:a0}, \eqref{E:mp}}\\
	(\chi&\diamondto(\phi\vee\psi))\leftrightarrow(\chi\diamondto\psi)\label{E:q2}&&\text{\eqref{E:q1}, \eqref{E:RCdiam}}\\
	(\chi&\diamondto\phi)\to(\chi\diamondto\psi)&&\text{\eqref{E:q2}, \eqref{E:a3}, \eqref{E:a0}, \eqref{E:mp}}
\end{align}
\begin{align}
	\eqref{E:T1}:\qquad(\psi&\wedge(\psi\to\chi))\to\chi\label{E:q5}&&\text{\eqref{E:a0}, \eqref{E:mp}}\\
	(\phi&\boxto(\psi\wedge(\psi\to\chi)))\to(\phi\boxto\chi)\label{E:q6}&&\text{\eqref{E:q5}, \eqref{E:Rmbox}}\\
	((\phi&\boxto\psi)\wedge(\phi\boxto(\psi\to\chi)))\to(\phi\boxto\chi)&&\text{\eqref{E:a1}, \eqref{E:q6},\eqref{E:a0}, \eqref{E:mp}}
\end{align}
\begin{align}
\eqref{E:T2}:\,	((\phi&\diamondto\psi)\wedge(\phi\boxto(\psi\to\chi)))\to(\phi\diamondto(\psi\wedge(\psi\to\chi)))\label{E:q7} &&\text{\eqref{E:a2}}\\
	(\psi&\wedge(\psi\to\chi))\to\chi\label{E:q8}&&\text{\eqref{E:a0}, \eqref{E:mp}}\\
	(\phi&\diamondto(\psi\wedge(\psi\to\chi)))\to(\phi\diamondto\chi)\label{E:q9}&&\text{\eqref{E:q8}, \eqref{E:Rmdiam}}\\
	((\phi&\diamondto\psi)\wedge(\phi\boxto(\psi\to\chi)))\to(\phi\diamondto\chi)&&\text{\eqref{E:q7}, \eqref{E:q9},\eqref{E:a0}, \eqref{E:mp}}
\end{align}
\begin{align}
	\eqref{E:T3}:\quad\psi&\to((\psi\to\chi)\to\chi)\label{E:r0}&&\text{\eqref{E:a0}, \eqref{E:mp}}\\
	(\phi&\boxto\psi)\to(\phi\boxto((\psi\to\chi)\to\chi))\label{E:r1}&&\text{\eqref{E:r0}, \eqref{E:Rmbox}}\\
	(\phi&\boxto\psi)\to((\phi\diamondto(\psi\to\chi))\to(\phi\diamondto\chi))&&\text{\eqref{E:r1},\eqref{E:T2},\eqref{E:a0}, \eqref{E:mp}}
\end{align}	
\begin{align}
	\eqref{E:T4}:\quad(\phi&\boxto(\psi \to \bot))\to((\phi\diamondto\psi)\to(\phi\diamondto\bot))\label{E:r5}&&\text{\eqref{E:T2}}\\
	((\phi&\diamondto\psi)\to(\phi\diamondto\bot))\to((\phi\diamondto\psi)\to\bot)\label{E:r7}&&\text{\eqref{E:a6},\eqref{E:a0}, \eqref{E:mp}}\\
	(\phi&\boxto\neg\psi)\to\neg(\phi\diamondto\psi)&&\text{\eqref{E:r5},\eqref{E:r7},\eqref{E:a0}, \eqref{E:mp}}\\
	((\phi&\diamondto\psi)\to \bot)\to((\phi\diamondto\psi)\to(\phi\boxto\bot))\label{E:r9}&&\text{\eqref{E:a0}, \eqref{E:mp}}\\
	\neg&(\phi\diamondto\psi)\to(\phi\boxto\neg\psi)&&\text{\eqref{E:r9},\eqref{E:a4},\eqref{E:a0}, \eqref{E:mp}}
\end{align}

\section{Proof of Lemma \ref{L:CK}}\label{A:2}
	(Part 1) Note that \eqref{E:a3} follows from \eqref{E:a1} in $\mathsf{CK}$ by applying \eqref{E:ax1} plus duality principles; \eqref{E:a6} can be deduced from \eqref{E:a5} similarly. Moreover, \eqref{E:Rmbox} and \eqref{E:T1} can be deduced in $\mathsf{CK}$ in the same way as in $\mathsf{IntCK}$. We sketch the proofs for the remaining axioms and inference rules:
	\begin{align}
		\eqref{E:a2}:\quad&(\phi\boxto\chi)\to(\phi\boxto(\neg(\psi\wedge \chi)\to\neg\psi))\label{E:ck2}&&\text{\eqref{E:a0},\eqref{E:mp},\eqref{E:Rmbox}}\\
		&(\phi\boxto\chi)\to((\phi\boxto\neg(\psi\wedge \chi))\to(\phi\boxto\neg\psi))\label{E:ck3}&&\text{\eqref{E:ck2},\eqref{E:T1},\eqref{E:a0},\eqref{E:mp}}\\
		&(\phi\boxto\chi)\to(\neg(\phi\boxto\neg\psi)\to\neg(\phi\boxto\neg(\psi\wedge \chi)))\label{E:ck4}&&\text{\eqref{E:ck3},\eqref{E:a0},\eqref{E:mp}}\\
		&(\phi\boxto\chi)\to((\phi\diamondto\psi)\to(\phi\diamondto(\psi\wedge \chi)))&&\text{\eqref{E:ck4},\eqref{E:ax1},\eqref{E:a0},\eqref{E:mp}}
	\end{align} 
	\begin{align}
		\eqref{E:a4}:\quad&(\phi\diamondto\psi)\to(\phi\boxto\chi)\label{E:ck5}&&\text{premise}\\
		&\neg(\phi\boxto\neg\psi)\to(\phi\boxto\chi)\label{E:ck10}&&\text{\eqref{E:ck5}, \eqref{E:ax1},\eqref{E:a0},\eqref{E:mp}}\\
		&(\phi\boxto\neg\psi)\to(\phi\boxto(\psi\to\chi))\label{E:ck8}&&\text{\eqref{E:a0},\eqref{E:mp},\eqref{E:Rmbox}}\\
		&(\phi\boxto\chi)\to(\phi\boxto(\psi\to\chi))\label{E:ck9}&&\text{\eqref{E:a0},\eqref{E:mp},\eqref{E:Rmbox}}\\
		&\neg(\phi\boxto\neg\psi)\to(\phi\boxto(\psi\to\chi))\label{E:ck11}&&\text{\eqref{E:ck10},\eqref{E:ck9},\eqref{E:a0},\eqref{E:mp}}\\
		&((\phi\boxto\neg\psi)\vee\neg(\phi\boxto\neg\psi))\to(\phi\boxto(\psi\to\chi))\label{E:ck12}&&\text{\eqref{E:ck8},\eqref{E:ck11},\eqref{E:a0},\eqref{E:mp}}\\
		&(\phi\boxto\neg\psi)\vee\neg(\phi\boxto\neg\psi)\label{E:ck13}&&\text{\eqref{E:ax0}}\\
		&\phi\boxto(\psi\to\chi)\label{E:ck14}&&\text{\eqref{E:ck12},\eqref{E:ck13},\eqref{E:mp}}
	\end{align}
	\begin{align}
		\eqref{E:RAdiam}:\quad&\phi\leftrightarrow\psi \in \mathsf{CK}\label{E:kc1}&&\text{premise}\\
		&(\phi\boxto\neg\chi)\leftrightarrow(\psi\boxto\neg\chi) \in \mathsf{CK}\label{E:kc2}&&\text{\eqref{E:kc1},\eqref{E:RAbox}}\\
		&\neg(\phi\boxto\neg\chi)\leftrightarrow\neg(\psi\boxto\neg\chi) \in \mathsf{CK}\label{E:kc3}&&\text{\eqref{E:kc2},\eqref{E:a0},\eqref{E:mp}}\\
		&(\phi\diamondto\chi)\leftrightarrow(\psi\diamondto\chi) \in \mathsf{CK}&&\text{\eqref{E:kc3},\eqref{E:ax1},\eqref{E:a0},\eqref{E:mp}}
	\end{align} 
	\begin{align}
		\eqref{E:RCdiam}:\quad&\phi\leftrightarrow\psi \in \mathsf{CK}\label{E:kc4}&&\text{premise}\\
		&(\chi\boxto\neg\phi)\leftrightarrow(\chi\boxto\neg\psi)\in \mathsf{CK}\label{E:kc6}&&\text{\eqref{E:kc4},\eqref{E:a0},\eqref{E:mp},\eqref{E:RCbox}}\\
		&(\chi\diamondto\phi)\leftrightarrow(\chi\diamondto\psi)\in \mathsf{CK}&&\text{\eqref{E:kc6},\eqref{E:ax1},\eqref{E:a0},\eqref{E:mp}}
	\end{align}
	Having now every element of $\mathbb{ICK}$ deduced in $\mathsf{CK}$, we can deduce the remaining parts of Lemma \ref{L:theorems} as it was done in Section \ref{sub:axiomatization}.
	
	As for Part 2, note that \eqref{E:ax0} intuitionistically implies $\neg\neg(\phi\diamondto\psi)\leftrightarrow(\phi\diamondto\psi)$, whence  \eqref{E:ax1} follows by \eqref{E:T4}.

\section{Proofs of some technical results from Section \ref{sub:foil}}\label{A:3}
All of the sketches in this Appendix are semi-formal but allow for an easy completion into full proofs in any complete Hilbert-style axiomatization of $\mathsf{FOIL}$. The following lemma lists most of the intuitionistic principles assumed in this appendix:
\begin{lemma}\label{L:foil-assumptions}
	Let $\Gamma \cup \{\phi, \psi\} \subseteq \mathcal{L}_{fo}$, and let $x \in Ind$.  
	Then all of the following statements hold:
	\begin{align}
		\Gamma, \phi \models_{fo} \psi &\Leftrightarrow \Gamma\models_{fo} \phi \to \psi\label{E:DT}\tag{DT}\\
		\Gamma \models_{fo} \phi \to \psi  &\Rightarrow \Gamma \models_{fo} \exists x\phi \to \psi\label{E:Bern}\tag{Bern} &&x\notin FV(\Gamma\cup \{\psi\})\\
		\Gamma \models_{fo} \phi &\Rightarrow \Gamma \models_{fo} \forall x\phi\label{E:Gen}\tag{Gen} &&x\notin FV(\Gamma)
	\end{align} 
\end{lemma}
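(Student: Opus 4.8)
The plan is to argue semantically, reducing all three principles to a single monotonicity (persistence) property of the satisfaction relation $\models_{fo}$, together with the standard fact, already noted after Definition \ref{D:sheaf}, that the truth value of a formula depends only on the values assigned to its free variables.

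First I would establish the $\mathsf{FOIL}$ analogue of Lemma \ref{L:chellas-monotonicity}: for every Kripke sheaf $\mathfrak{S}$, all $w \leq v$ in $W$, every $(\mathfrak{S},w)$-assignment $f$, and every $\phi \in \mathcal{L}_{fo}$, if $\mathfrak{S}, w \models_{fo} \phi[f]$ then $\mathfrak{S}, v \models_{fo} \phi[f \circ \mathbb{H}_{wv}]$. This is proved by a routine induction on the construction of $\phi$; the atomic cases use that $\mathbb{H}_{wv}$ is a homomorphism, the $\to$ and $\forall$ cases use the transitivity of $\leq$ together with the coherence law $\mathbb{H}_{wu} = \mathbb{H}_{wv} \circ \mathbb{H}_{vu}$, and the remaining cases are immediate.

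With persistence in hand, \eqref{E:DT} is direct. For the left-to-right direction, suppose $\Gamma, \phi \models_{fo} \psi$ and let an evaluation point satisfy $\Gamma$; to show $\phi \to \psi$ holds there, consider any $\leq$-successor at which $\phi$ holds. Persistence lifts the satisfaction of $\Gamma$ to that successor, so $\Gamma \cup \{\phi\}$ holds there, and the hypothesis yields $\psi$. The right-to-left direction is immediate from reflexivity of $\leq$. For \eqref{E:Gen} and \eqref{E:Bern} the argument is parallel, the extra ingredient being free-variable invariance: starting from an evaluation point satisfying $\Gamma$, one passes to an arbitrary successor and an arbitrary witness $a$ for the bound variable $x$; persistence carries $\Gamma$ to the successor, and since $x \notin FV(\Gamma)$ one may overwrite the assignment at $x$ by $a$ without disturbing $\Gamma$. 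The hypothesis $\Gamma \models_{fo} \phi$ (resp.\ $\Gamma \models_{fo} \phi \to \psi$) then applies, and for \eqref{E:Bern} the condition $x \notin FV(\psi)$ allows the witness to be discharged in the conclusion.

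I expect the only real work to lie in the persistence induction, and within it in the $\forall$ and $\to$ cases, where the quantifier-shift to successor worlds has to be reconciled with the composition law for the transition homomorphisms; the rest is bookkeeping. Alternatively, since $\mathsf{FOIL}$ is strongly complete with respect to the Kripke-sheaf semantics (as recalled in Section \ref{sub:foil}), one may read $\models_{fo}$ as provability in $\mathsf{FOIL}$ and simply observe that \eqref{E:DT}, \eqref{E:Gen}, and \eqref{E:Bern} are textbook derivable metatheorems of any Hilbert-style presentation of first-order intuitionistic logic; this is the route suggested by the semi-formal style of the present appendix.
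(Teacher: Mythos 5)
Your proposal is correct, but note that the paper itself offers no proof of this lemma at all: it is stated as a list of ``intuitionistic principles assumed in this appendix,'' and the appendix preamble makes clear the intended justification is simply that \eqref{E:DT}, \eqref{E:Bern}, and \eqref{E:Gen} are standard derivable metatheorems of any complete Hilbert-style axiomatization of $\mathsf{FOIL}$ (whose strong completeness for the Kripke-sheaf semantics is cited from the literature in Section \ref{sub:foil}). So your \emph{second} route --- read $\models_{fo}$ as $\mathsf{FOIL}$-derivability via completeness and invoke the textbook deduction theorem, $\exists$-elimination rule, and generalization rule --- is exactly the paper's implicit argument. Your \emph{primary} route, by contrast, is genuinely different and more self-contained: it proves the three principles directly against the semantics, with a persistence lemma for Kripke sheaves (the $\mathcal{L}_{fo}$ analogue of Lemma \ref{L:chellas-monotonicity}) as the only substantive ingredient, plus free-variable invariance and reflexivity of $\leq$. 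This buys independence from the completeness theorem for $\mathsf{FOIL}$ (a much heavier result than anything you actually need), at the cost of redoing a routine induction. The sketch of that induction is sound; one small correction of emphasis: with the paper's composition convention, the $\to$ and $\forall$ cases of persistence are automatic from the semantic clauses via transitivity and the coherence law $\mathbb{H}_{wu} = \mathbb{H}_{wv}\circ\mathbb{H}_{vu}$ (no induction hypothesis is needed there), so the only places where the induction hypothesis and the homomorphism property genuinely work are the atomic and $\exists$ cases, the latter using $f[x/a]\circ\mathbb{H}_{wv} = (f\circ\mathbb{H}_{wv})[x/\mathbb{H}_{wv}(a)]$.
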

In case our formulas get too long, we will be replacing then with their labels, writing e.g. $(123) \to (124)$ instead of $\phi \to \psi$ in case $\phi$ did occur earlier as equation $(123)$ and $\psi$ as equation $(124)$.
\begin{proof}[Proof of Lemma \ref{L:th-existence-1}]
	(Part 1) Consider the following deduction D1 from premises:
	\begin{align}
		&Sy' \wedge (\forall z)_O(Ezy\leftrightarrow ST_z(\psi))\label{E:cof1}&&\text{premise}\\
		&\forall w(Rxy'w\to ST_w(\chi))\label{E:cof2}&&\text{premise}\\
		&Sy \wedge (\forall z)_O(Ezy\leftrightarrow ST_z(\psi))\label{E:cof3}&&\text{premise}\\
		&(\forall z)_O(Ezy\leftrightarrow Ezy')\label{E:cof4}&&\text{by \eqref{E:cof1}, \eqref{E:cof3}}\\
		&y \equiv y'\label{E:cof5}&&\text{by \eqref{E:cof4}, \eqref{E:th12}}\\
		&\forall w(Rxyw\to ST_w(\chi))\label{E:cof6}&&\text{by \eqref{E:cof2}, \eqref{E:cof5}}
	\end{align}
We now reason as follows:
\begin{align*}
	&Th, \eqref{E:cof1}, \eqref{E:cof2}\models_{fo} \forall y(Sy \wedge (\forall z)_O(Ezy\leftrightarrow ST_z(\psi))\to\forall w(Rxyw\to ST_w(\chi)))&&\text{(D1, \eqref{E:DT}, \eqref{E:Gen})}\\
	&Th\models_{fo} \exists y'(\eqref{E:cof1}\wedge\eqref{E:cof2})\to \forall y(Sy \wedge (\forall z)_O(Ezy\leftrightarrow ST_z(\psi))\to\forall w(Rxyw\to ST_w(\chi)))&&\text{(\eqref{E:DT}, \eqref{E:Bern})}
\end{align*}
	Now the definition of $ST$ yields the result claimed for Part 1. Part 2 is proved by a parallel argument.
\end{proof}
\begin{proof}[Proof of Lemma \ref{L:th-existence-2}]
	We proceed by induction on the construction of $\phi\in \mathcal{L}$.
	
	\textit{Basis}. If $\phi = p \in Var$ (resp. $p = \bot, \top$), then the Lemma follows by \eqref{E:th6} (resp. \eqref{E:th7}, \eqref{E:th8}).
	
	\textit{Induction step}. The following cases are possible:
	
	\textit{Case 1}. $\phi = \psi\ast\chi$, where $\ast\in \{\wedge, \vee, \to\}$. We consider first-order deduction D2:
	\begin{align}
		&Sx\wedge(\forall w)_O(Ewx\leftrightarrow ST_w(\psi))\label{E:fo1}&&\text{premise}\\
		&Sy\wedge(\forall w)_O(Ewy\leftrightarrow ST_w(\chi))\label{E:fo2}&&\text{premise}\\
		&Sz\wedge(\forall w)_O(Ewz\leftrightarrow (Ewx\ast Ewy))\label{E:fo3}&&\text{premise}\\
		&(\forall w)_O((Ewx\ast Ewy)\leftrightarrow(ST_w(\psi)\ast ST_w(\chi)))\label{E:fo4}&&\text{by \eqref{E:fo1}, \eqref{E:fo2}}\\
		&(\forall w)_O(Ewz\leftrightarrow(ST_w(\psi)\ast ST_w(\chi)))\label{E:fo5}&&\text{by \eqref{E:fo3}, \eqref{E:fo4}}\\
		&\exists z(Sz\wedge (\forall w)_O(Ewz\leftrightarrow ST_w(\psi\ast\chi)))\label{E:fo7}&&\text{by \eqref{E:fo3}, \eqref{E:fo5}, def. of $ST$}
	\end{align}
We now reason as follows:
\begin{align*}
	&Th, \eqref{E:fo1}, \eqref{E:fo2} \models_{fo} \exists z\eqref{E:fo3}\to \exists z(Sz\wedge (\forall w)_O(Ewz\leftrightarrow ST_w(\psi\ast\chi)))&&\text{(D2, \eqref{E:DT}, \eqref{E:Bern})}\\
	&Th, \eqref{E:fo1}, \eqref{E:fo2} \models_{fo} Sx\wedge Sy &&\text{(trivially)}\\
	&Th,  \eqref{E:fo1}, \eqref{E:fo2} \models_{fo} (Sx\wedge Sy )\to \exists z\eqref{E:fo3}&&\text{\eqref{E:th9}}\\
	&Th, \eqref{E:fo1}, \eqref{E:fo2} \models_{fo}\exists z(Sz\wedge (\forall w)_O(Ewz\leftrightarrow ST_w(\psi\ast\chi)))&&\text{\eqref{E:mp}}\\
	&Th\models_{fo} \exists x\eqref{E:fo1} \to (\exists z\eqref{E:fo3}\to \exists z(Sz\wedge (\forall w)_O(Ewz\leftrightarrow ST_w(\psi\ast\chi))))&&\text{\eqref{E:DT}, \eqref{E:Bern}}\\
	&Th\models_{fo} \exists x\eqref{E:fo1} \wedge \exists y\eqref{E:fo2}&&\text{(IH)}\\
	&Th\models_{fo}\exists z(Sz\wedge (\forall w)_O(Ewz\leftrightarrow ST_w(\psi\ast\chi)))&&\text{\eqref{E:mp}} 
\end{align*}

	\textit{Case 2}. $\phi = \psi\boxto\chi$. We consider the following deductions from premises. 
	
	Deduction D3:
	\begin{align}
		&Sy'\wedge(\forall w)_O(Ewy'\leftrightarrow ST_w(\psi))\label{E:ffo1}&&\text{premise}\\
		&\forall w(Rxy'w\to ST_w(\chi))\label{E:ffo2}&&\text{premise}\\
		&\exists y((\eqref{E:ffo1}\wedge\eqref{E:ffo2})^y_{y'})\label{E:ffo4}&&\text{by \eqref{E:ffo1},  \eqref{E:ffo2}}\\
		&ST_x(\psi\boxto\chi)\label{E:ffo5}&&\text{by \eqref{E:ffo4}, def. of $ST$}
	\end{align}
Deduction D4:
\begin{align}
	&\eqref{E:ffo1}&&\text{premise}\notag\\
	&Sy\wedge(\forall w)_O(Ewy\leftrightarrow ST_w(\psi))\label{E:ffo7}&&\text{premise}\\	
	&\forall w(Rxyw\to ST_w(\chi))\label{E:ffo8}&&\text{premise}\\
	&(\forall w)_O(Ewy\leftrightarrow Ewy)\label{E:ffo8a}&&\text{by \eqref{E:ffo1}, \eqref{E:ffo7}}\\
	&y \equiv y'\label{E:ffo9}&&\text{by \eqref{E:ffo8a}, \eqref{E:th12}}\\
	&\forall w(Rxy'w\to ST_w(\chi))&&\text{by \eqref{E:ffo8}, \eqref{E:ffo9}}
\end{align}  
	D3 and D4 lead to the following intermediary results:
\begin{align}
	Th, \eqref{E:ffo1}\models_{fo}\forall w(Rxy'w\to ST_w(\chi))\to ST_x(\psi\boxto\chi)\label{fo1} &&\text{(D3, \eqref{E:DT})}\\
	Th, \eqref{E:ffo1}\models_{fo}\exists y(\eqref{E:ffo7}\wedge\eqref{E:ffo8})\to\forall w(Rxy'w\to ST_w(\chi))\label{fo1a}&&\text{(D4, \eqref{E:DT}, \eqref{E:Bern})}\\
	Th, \eqref{E:ffo1}\models_{fo}ST_x(\psi\boxto\chi)\to\forall w(Rxy'w\to ST_w(\chi))\label{fo2}&&\text{(\eqref{fo1a}, def. of $ST$)}\\
	Th, \eqref{E:ffo1}\models_{fo}\forall x(ST_x(\psi\boxto\chi)\leftrightarrow\forall w(Rxy'w\to ST_w(\chi)))\label{fo3}&&\text{\eqref{fo1},\eqref{fo2}, \eqref{E:Gen}}
\end{align}	
	We now feed these results into the next deduction D5:
	\begin{align}
		&\eqref{E:ffo1}&&\text{premise}\notag\\
		&Sz\wedge(\forall w)_O(Ewz\leftrightarrow ST_w(\chi))\label{E:ffo13}&&\text{premise}\\	
		&Sy'\wedge Sz\label{E:ffo14}&&\text{by \eqref{E:ffo1}, \eqref{E:ffo13}}\\
		&\exists z'(\forall x)_O(Exz'\leftrightarrow \forall w(Rxy'w\to Ewz))\label{E:ffo15}&&\text{by \eqref{E:ffo14}, \eqref{E:th10}}\\
		&\exists z'(\forall x)_O(Exz'\leftrightarrow \forall w(Rxy'w\to ST_w(\chi)))\label{E:ffo16}&&\text{by \eqref{E:ffo13}, \eqref{E:ffo15}}\\
		&\exists z'(\forall x)_O(Exz'\leftrightarrow ST_x(\psi\boxto\chi))\label{E:ffo17}&&\text{by \eqref{E:ffo16}, \eqref{fo3}}
	\end{align}
	We now finish our reasoning as follows:
	\begin{align*}
		Th\models_{fo}\exists y'\eqref{E:ffo1} \to (\exists z\eqref{E:ffo13} \to \exists z'(\forall x)_O(Exz'\leftrightarrow ST_x(\psi\boxto\chi)))&&\text{(D5, \eqref{E:DT}, \eqref{E:Bern})}\\
		Th\models_{fo} \exists y'\eqref{E:ffo1} \wedge \exists z\eqref{E:ffo13}&&\text{(IH)}\\
		Th\models_{fo}\exists z'(\forall x)_O(Exz'\leftrightarrow ST_x(\psi\boxto\chi)))&&\eqref{E:mp}
	\end{align*}

	\textit{Case 3}. $\phi = \psi\diamondto\chi$. Parallel to Case 2.
\end{proof} 
\begin{proof}[Proof of Proposition \ref{P:easy}]
	We show that the standard translation of every axiom of $\mathbb{ICK}$ first-order-follows from $Th$ and that the rules of $\mathbb{ICK}$ preserve this property. First, note that every instance of \eqref{E:a0} is translated  into an instance of \eqref{E:a0} and hence a $\mathsf{FOIL}$-valid formula; the same is true for every instance of \eqref{E:a6}. By Lemma \ref{L:th-existence-2}, we also know that every instance of \eqref{E:a5} first-order-follows from $Th$. Next, the applications of \eqref{E:mp}  translate into applications of this same rule \eqref{E:mp}, and the applications of every rule in the set $\{\eqref{E:RAbox}, \eqref{E:RCbox}, \eqref{E:RAdiam}, \eqref{E:RCdiam}\}$ translate to applications of some $\mathsf{FOIL}$-deducible rule. It remains to consider the instances of axiomatic schemas \eqref{E:a1}--\eqref{E:a4}, which is a tedious but straightforward exercise in first-order intuitionistic reasoning. We display the reasoning for \eqref{E:a4} as an example.
	
	Let $\phi,\psi,\chi \in \mathcal{L}$. Consider the following first-order deduction D6 from premises:
	\begin{align}
		&ST_x(\phi\diamondto\psi)\to ST_x(\phi\boxto\chi)\label{E:ea1}&&\text{premise}\\
		&Sy\wedge (\forall z)_O(Ezy\leftrightarrow ST_z(\phi))\label{E:ea2}&&\text{premise}\\
		&Rxyw\wedge ST_w(\psi)\label{E:ea3}&&\text{premise}\\
		&\exists w(Rxyw\wedge ST_w(\psi))\label{E:ea4}&&\text{by \eqref{E:ea3}}\\
		&\exists y(\eqref{E:ea2}\wedge\eqref{E:ea4})\label{E:ea4a}&&\text{by \eqref{E:ea2}, \eqref{E:ea4}}\\
		&ST_x(\phi\diamondto\psi)\label{E:ea5}&&\text{by \eqref{E:ea4a}, def. of $ST$}\\
		&ST_x(\phi\boxto\chi)\label{E:ea6}&&\text{by \eqref{E:ea1}, \eqref{E:ea5}}\\
		&\forall y(Sy \wedge (\forall z)_O(Ezy\leftrightarrow ST_z(\psi)))\to\notag\\
		&\qquad\qquad\qquad\qquad\to\forall w(Rxyw\to ST_w(\chi)))\label{E:ea7}&&\text{by \eqref{E:ea6}, Lemma \ref{L:th-existence-1}}\\
		&\forall w(Rxyw\to ST_w(\chi))\label{E:ea8}&&\text{by \eqref{E:ea2}, \eqref{E:ea7}}\\
		&ST_w(\chi)\label{E:ea9}&&\text{by \eqref{E:ea3}, \eqref{E:ea8}}
	\end{align}
We now reason as follows:
\begin{align*}
	&Th, \eqref{E:ea1}, \eqref{E:ea2}\models_{fo} \forall w(Rxyw \to (ST_w(\psi) \to ST_w(\chi)))&&\text{(D5, \eqref{E:DT}, \eqref{E:Gen})}\\
	&Th, \eqref{E:ea1}, \eqref{E:ea2}\models_{fo} \forall w(R(x,y,w) \to ST_w(\psi\to\chi))&&\text{(def. of $ST$)}\\
	&Th, \eqref{E:ea1}, \eqref{E:ea2}\models_{fo} \exists y(\eqref{E:ea2}\wedge\forall w(Rxyw \to ST_w(\psi\to\chi)))\\
	&Th, \eqref{E:ea1}, \eqref{E:ea2}\models_{fo} ST_x(\phi\boxto(\psi\to\chi))&&\text{(def. of $ST$)}\\
	&Th, \eqref{E:ea1}\models_{fo} \exists y\eqref{E:ea2}\to ST_x(\phi\boxto(\psi\to\chi))&&\eqref{E:DT}, \eqref{E:Bern}\\
	&Th\models_{fo} \exists y\eqref{E:ea2}&&\text{(Lemma \ref{L:th-existence-2})}\\
	&Th, \eqref{E:ea1}\models_{fo}ST_x(\phi\boxto(\psi\to\chi))&&\eqref{E:mp}\\
	&Th\models_{fo}\eqref{E:ea1}\to ST_x(\phi\boxto(\psi\to\chi))&&\eqref{E:DT}\\
	&Th\models_{fo}ST_x(((\phi\diamondto\psi)\to (\phi\boxto\chi))\to(\phi\boxto(\psi\to\chi)))&&\text{(def. of $ST$)}
\end{align*}
\end{proof}	
}
\end{document}